\newtheorem{lem}{Lemma}[section]
    \newtheorem{prop}[lem]{Proposition}
    \newtheorem{thm}[lem]{Theorem}
   \theoremstyle{definition}
    \newtheorem{dfn}[lem]{Definition}
\theoremstyle{remark}
    \newtheorem{rem}[lem]{Remark}
\numberwithin{equation}{section}
\DeclareMathOperator{\Sym}{Sym}
\def\bb{\mathbf}
\def\la{\langle}
\def\ra{\rangle}
\def\phi{\varphi}
\newcommand{\Pp}[1]{\bb{P}^{#1}}
\newcommand{\GL}{\mathrm{GL}}
\newcommand{\PGL}{\mathrm{PGL}}
\newcommand{\ee}{\mathrm{e}}
\newcommand{\Ee}{\mathrm{E}}
\newcommand{\Ef}{\bb E}
\def\Z{\bb{Z}}
\def\Q{\bb{Q}}
\def\C{\bb{C}}
\newcommand{\qmin}{\mathcal{Q}^-}
\newcommand{\imin}{\mathcal{I}^-}
\newcommand{\pmin}{\pi^-}
\def\co{\colon\thinspace}
\newcommand{\num}[1]{{\operatorname{#1}}}
\newcommand{\Chi}{\mathrm{X}}
\def\pu{\bullet}
\def\mil{\mspace{-9mu}}
\newcommand{\Mm}[2]{\mathcal M_{{#1},{#2}}}
\newcommand{\s}{\mathfrak S}
\DeclareMathOperator{\Aut}{Aut}
\newcommand{\Ll}{\mathbf L}
\newcommand{\Schur}[1]{\bb S_{#1}}
\def\Inv{\Schur 2}
\def\Ant{\Schur{1,1}}
\newcommand{\schur}[1]{s_{#1}}
\def\inv{\schur 2}
\def\ant{\schur{1,1}}
\newcommand{\coh}[3][\Q]{H^{#2}({#3};{#1})}
\newcommand{\BM}[3][\Q]{\bar{H}_{#2}({#3};{#1})}
\def\ba{\big|}
\def\X{\mathcal{X}}
\DeclareMathOperator{\Conf}{Conf}
\newcommand{\B}[2]{B({#2},{#1})}
\newcommand{\tB}[2]{\tilde{B}({#2},{#1})}
\newcommand{\F}[2]{F({#2},{#1})}
\newcommand{\tF}[2]{\tilde{F}({#2},{#1})}
\newcommand{\ideal}[1]{\bb I(#1)}
\newcommand{\van}[1]{\bb V(#1)}
\def\op{\mathring}
\def\duale{\check{\ }}
\author{Orsola Tommasi}
\address{Institut f\"ur Algebraische Geometrie\\
Leibniz Universit\"at Hannover\\
Wel\-fen\-gar\-ten~1\\
D--30167 Hannover\\
Germany}
\email{tommasi@math.uni-hannover.de}
\subjclass{Primary: 14H10. Secundary: 55R80, 14F99.}
\thanks{Partial support from program \emph{Wege in die Forschung II} of Leibniz Universit\"at Hannover during the preparation of this paper is gratefully acknowledged}
\title[Quartic curves with an odd theta characteristics]{Cohomology of the moduli space of smooth plane quartic curves with an odd theta characteristic}
\date{February 17th, 2010}
\begin{document}

\begin{abstract}
We compute the rational cohomology of the moduli space of non-singular non-hyperelliptic complex projective curves of genus~3 with an odd theta characteristic.
\end{abstract}
\maketitle

\section{Introduction}
The subject of this paper is the rational cohomology of the moduli space $\qmin$ of smooth plane quartic curves with an odd theta characteristic. In other words, the elements of $\qmin$ are isomorphism classes of pairs $(C,\mathcal L)$, where $C$ is a smooth plane quartic curve and $\mathcal L$ is a line bundle on $C$ such that $\mathcal L^{\otimes2}\cong\omega_C$ and the dimension of $H^0(C,L)$ is odd. It is a classical results that for a fixed quartic curve $C$ such theta characteristics $\mathcal L$ correspond to the divisors on $C$ cut by the $28$ bitangents of $C$. Therefore, we can equivalently interpret $\qmin$ as the moduli space of pairs $(C,\tau)$ where $C$ is a smooth quartic curve and $\tau$ is a bitangent line to $C$. Note that there are two possibilities for line $\tau$ to be a bitangent line to a smooth plane quartic $C$: either $\tau$ intersects $C$ in two distinct points with multiplicity two, in which case we will call $\tau$ a \emph{proper bitangent}, of $\tau$ intersects $C$ in one point with multiplicity $4$, in which case we will call $\tau$ a \emph{flex bitangent} of $C$.

Our main result is the following.

\begin{thm}\label{main}
The cohomology with rational coefficients  of $\qmin$ is non-trivial only in degree $k\in\{0,5,6\}$. All cohomology groups carry pure Hodge structures. Specifically, one has  $\coh5\qmin=\Q(-5)$ and $\coh6\qmin=\Q(-6)^{\oplus2}$.
\end{thm}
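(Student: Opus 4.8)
The plan is to present $\qmin$ as a quotient stack $[\mathcal X/G]$ with $G$ a linear algebraic group acting with finite stabilizers (automorphism groups of quartics being finite) and $\mathcal X$ the complement of a discriminant inside an affine space, so that $H^k(\qmin;\Q)\cong H^k_G(\mathcal X;\Q)$, and then to compute the right‑hand side by the Vassiliev--Gorinov method of simplicial resolutions of discriminants, performed $G$-equivariantly. Concretely: using the interpretation of $\qmin$ as the moduli of pairs $(C,\tau)$ together with the transitivity of $\PGL_3$ on lines, fix $\tau=\{z=0\}$, so that $\qmin\cong[\mathcal X_\tau/P]$ with $P=\mathrm{Stab}_{\PGL_3}(\tau)$ and $\mathcal X_\tau$ the space of smooth quartics having $\tau$ as a bitangent. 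A quartic $C=V(F)$ satisfies $C\cdot\tau=2D$ --- equivalently, $\tau$ is a bitangent of $C$, proper or flex --- exactly when $F|_\tau$ is a perfect square, i.e.\ $F=G^2+zH$ for a binary quadratic $G$ in $x,y$ and a cubic $H$; thus $\mathcal X_\tau$ is the complement of the discriminant $\Delta=\{(G,H):V(G^2+zH)\text{ is singular}\}$ inside the $13$-dimensional affine space $\mathcal A$ of such pairs $(G,H)$, acted on by $P$ and by the residual scalars --- equivalently, one may package everything $\GL_3$-equivariantly as a bundle over $(\Pp 2)^\vee$ (we write $G$ for the acting group in either presentation). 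It is convenient to split $\qmin=\qmin_{\mathrm{prop}}\sqcup\qmin_{\mathrm{fl}}$ according to whether $D$ is reduced, the flex locus $\qmin_{\mathrm{fl}}$ --- where $G$ is the square of a linear form --- being closed, since the stabilizers and the shape of the discriminant differ in the two cases.

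The $G$-equivariant Borel--Moore homology of the ambient space is straightforward: $\mathcal A$ is a $G$-equivariant affine bundle over a point (over $(\Pp 2)^\vee$ in the global model), $G$ is up to homotopy a product of general linear groups, and $\bar H^G_*(\mathcal A;\Q)$ is a small free module over $H^*(BG;\Q)$, expressible in Chern classes. The substance of the computation is $\bar H^G_*(\Delta;\Q)$. One resolves $\Delta$ following Vassiliev and Gorinov: to a pair $\big(F,\text{ a finite subset of }\operatorname{Sing}V(F)\big)$ one assigns a simplex, the simplices being glued inside $\Delta\times\Pp N$ (for $N\gg0$) into a space $|\Delta|$ carrying a proper homotopy equivalence $|\Delta|\to\Delta$, and one stratifies $|\Delta|$ by the configuration type of $\operatorname{Sing}V(F)$: the number of singular points, the presence of singularities worse than nodes or of a positive-dimensional singular locus, and --- the essential new feature here --- the incidence of the singular points with $\tau$, since a node lying on $\tau$ cuts only two (linear) conditions on $(G,H)$ whereas a node off $\tau$ cuts three. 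Each stratum fibers over a configuration space of points in $\Pp 2$, in $\tau\cong\Pp 1$, or a mixture of the two, with fiber the product of an open simplex and the affine space of quartics $G^2+zH$ singular along the prescribed configuration, so its $G$-equivariant Borel--Moore homology follows from the known cohomology of these configuration spaces, twisted by explicit sign local systems and by Tate twists recording the codimensions. Summing over the finitely many configuration types via the spectral sequence of the stratification of $|\Delta|$ yields $\bar H^G_*(\Delta;\Q)$, and the long exact sequence
\[
\cdots\to\bar H^G_k(\Delta)\to\bar H^G_k(\mathcal A)\to\bar H^G_k(\mathcal X_\tau)\to\bar H^G_{k-1}(\Delta)\to\cdots
\]
then gives $\bar H^G_*(\mathcal X_\tau;\Q)$; equivariant Poincar\'e duality (legitimate as $\mathcal X_\tau$ is smooth and the stabilizers are finite) turns this into $H^*_G(\mathcal X_\tau;\Q)=H^*(\qmin;\Q)$, and one final long exact sequence glues the proper and flex pieces. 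Since every stratum contributes only classes of Tate type, the whole computation stays within mixed Tate Hodge structures and ultimately displays each $H^k(\qmin;\Q)$ as a pure Tate structure $\Q(-n)^{\oplus r}$; the expectation is that the bulky even-degree cohomology of $\mathcal A$ cancels almost entirely against contributions of $\Delta$, leaving $\Q$, $\Q(-5)$ and $\Q(-6)^{\oplus2}$ in degrees $0$, $5$ and $6$ respectively.

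The main obstacle is the equivariant dissection of $\Delta$: compiling the complete and correct list of configuration types of $\operatorname{Sing}V(F)$ --- collinear or infinitely near singular points, singularities worse than nodes, the degenerate strata in which $\operatorname{Sing}V(F)$ is positive-dimensional or $\tau$ is a component of $V(F)$, and every pattern of incidence of the singular points with $\tau$ --- then computing the twisted equivariant Borel--Moore homology of each associated configuration-space bundle, and finally pinning down the numerous differentials in the assembling spectral sequence so that the cancellation down to the stated three groups really occurs. The fixed bitangent $\tau$, and the resulting distinction between singularities lying on or off $\tau$, is precisely what makes this more intricate than the (already delicate) determination of the cohomology of the moduli space of plane quartics without a marked bitangent; a shortcut through the degree-$28$ covering $\qmin\to\mathcal Q$ onto the moduli of unmarked quartics and the known cohomology of $\mathcal Q$ would instead require understanding the cohomology of the universal covering as a module over the monodromy group $\mathrm{Sp}_6(\Fi_2)$, which seems no easier.
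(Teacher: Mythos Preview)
Your plan is essentially the paper's: apply Vassiliev--Gorinov to the discriminant, refine the list of singular configurations by their incidence with the bitangent, treat proper and flex bitangents separately, and pass to the quotient. The one substantive difference is the choice of ambient affine space. You fix only the line $\tau$ and parametrize via $(G,H)\mapsto G^2+zH$; the paper instead fixes the full length-$2$ tangency subscheme $P\in\operatorname{Hilb}_2(\Pp2)$ and works fibrewise with the honest linear subspace $V_{p,q}\subset V$ of quartics tangent to $pq$ at $p$ and $q$, passing to the quotient by Peters--Steenbrink's Leray--Hirsch isomorphism $\coh\pu{\imin}\cong\coh\pu{\qmin}\otimes\coh\pu{\GL(3)}$ rather than equivariant cohomology. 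The paper's setup buys two things: the singularity conditions are genuinely linear on $V_{p,q}$ (in your model a node off $\tau$ imposes conditions that are affine in $H$ with constant term quadratic in $G$ --- the fibre is still an affine space, so Vassiliev--Gorinov applies, but this is worth saying explicitly), and the configuration list becomes a direct refinement of Vassiliev's list for unmarked quartics, so much of the analysis can be imported. One small point to watch in your model is the redundancy $(G,H)\sim(-G,H)$, which must be absorbed into the acting group; your ``residual scalars'' presumably do this, but it should be made explicit.
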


To explain our approach, let us start by considering the moduli space $\mathcal Q$ of smooth quartic curves in the projective plane. Forgetting the chosen bitangent yields a map $o\co\qmin\rightarrow\mathcal Q$, which is finite of degree $28$. 
Quartic curves are defined by the vanishing of polynomials of degree four in three indeterminates, i.e. by elements of the vector space $V:=\C[x_0,x_1,x_2]_4$. Clearly, not every element of $V$ defines a non-singular curve, but we have to exclude the locus $\Sigma\subset V$ of singular polynomials. The action of $\GL(3)$ on $\Pp2$ and $\C[x_0,x_1,x_2]$ preserves $\Sigma$, thus inducing an action on $V\setminus\Sigma$. The moduli space $\mathcal Q$ is the geometric quotient of $V\setminus\Sigma$ by the action of $\GL(3)$.

The rational cohomology of $V\setminus\Sigma$ was computed by Vassiliev in \cite{Vart}, using his method for the computation of the cohomology of complements of discriminants. Comparing this result with the rational cohomology of the moduli space $\mathcal Q$, as computed by Looijenga in \cite{Looij}, one observes that the cohomology of the space of non-singular polynomials in $V$ is isomorphic (as graded vector space) to the tensor product of the cohomology of the moduli space  $\mathcal Q$ and that of $\GL(3)$. Indeed, Peters and Steenbrink \cite{PS} proved that this is always the case when comparing the rational cohomology of the space of non-singular homogeneous polynomials with the cohomology of the corresponding moduli space of smooth hypersurfaces. 

In this paper we use an analogous construction, in which we replace the vector space $V$ of homogeneous polynomials of degree $4$  with a certain incidence correspondence. 
This follows the approach of \cite{OTM32}, where we considered quartic curves with two marked points.
A bitangent line $\tau$ to a fixed smooth plane quartic $C$ is always uniquely determined by the scheme-theoretic intersection of $C$ and $\tau$, which is a subscheme $P\subset\Pp2$ of length $2$. Note that any $P\in\operatorname{Hilb}_2(\Pp2)$ spans a uniquely defined line $\ell_P\subset\Pp2$. If $P$ is the intersection of $C$ with a bitangent line, then this bitangent line is exactly $\ell_P$.

Therefore, we consider the incidence correspondence
$$
\imin:=
\left\{(P,f)\in\operatorname{Hilb}_2(\Pp2) \times(V\setminus\Sigma)|
f|_{\ell_P}\in \ideal P^2
\right\}.
$$
The action of $\GL(3)$ on $\Pp2$ and $V$ extends to $\imin$ and the geometric quotient $\imin/\GL(3)$ is isomorphic to $\qmin$. Then the following isomorphism of graded vector spaces with mixed Hodge structures holds:
\begin{equation*}
\coh\pu{\imin}\cong \coh\pu{\qmin}\otimes \coh\pu{\GL(3)}.
\end{equation*}
This follows from \cite{PS}, in view of \cite[Theorem~5.2]{BT}.
As a consequence, we have that determining the rational cohomology of $\imin$ immediately yields the rational cohomology of $\qmin$. 

A natural way to investigate $\imin$ and its cohomology is to use the natural projection $\pmin\co \imin\rightarrow \operatorname{Hilb}_2(\Pp2)$. First one observes that all fibres of $\pmin$ lying over reduced subschemes in $\operatorname{Hilb}_2(\Pp2)$ are isomorphic. Analogously, all fibres of $\pmin$ lying over fat points $P\in\operatorname{Hilb}_2$ are isomorphic. Furthermore, in both cases the fibres are the complement of $\Sigma$ in a linear subspace of $V$. This enable us to apply Vassiliev--Gorinov's method for the cohomology of complements of discriminants (\cite{Vart},\cite{Gorinov},\cite{OTM4}) to compute the cohomology of these fibres. The study of the Leray spectral sequence associated to the restriction of $\pmin$ to the stratum of $\imin$ corresponding to proper bitangent, respectively, to the stratum of $\imin$ corresponding to flex bitangent allows us to determine the cohomology of $\imin$.

The plan of the paper is as follows. In Section~\ref{setup} we set up our notation and we prove the relationship between the rational cohomology of the incidence correspondences we deal with and the cohomology of their $\GL(3)$-quotients. In Section~\ref{introQ0} we compute the cohomology of the moduli space of plane smooth quartic curves with a proper bitangent. The proof of this result relies on the analysis of singular configurations performed in Sections~\ref{nonrigid}--\ref{sixpts}. Finally, in Section~\ref{Qdelta} we prove that the moduli space of smooth plane quartic curves with a flex bitangent has the rational cohomology of a point. We close the paper with a brief review of Vassiliev--Gorinov's method, the topological method that plays a major role in the proof of our result.

\subsection*{Notation}
{\small
\noindent\begin{longtable}{p{1.41cm}p{10cm}}
$V$&vector space of homogeneous polynomials of degree $4$ in $x_0,x_1,x_2$.\\
$\Sigma$&locus of singular polynomials in $V$.\\
$\s_n$&the symmetric group in $n$ letters.\\
$\van f$&vanishing locus of $f$.\\
$K_0(\mathsf{HS_\Q})$& Grothendieck group of rational (mixed) Hodge structures over $\Q$.\\
$K_0(\mathsf{HS_\Q^{\s_n}})$& Grothendieck group of rational (mixed) Hodge structures endowed with an $\s_n$-action.\\
$\Q(m)$& Tate Hodge structure of weight $-2m$.\\
$\Ll$& class of $\Q(-1)$ in $K_0(\mathsf{HS_\Q})$.\\
$\Schur\lambda$& $\Q$-representation of $\s_n$ indexed by the partition $\lambda\vdash n$.\\
$\schur\lambda$& Schur polynomial indexed by the partition $\lambda\vdash n$.\\
$\Delta_j$ & $j$-dimensional closed simplex.\\
$\op\Delta_j$ & interior of the $j$-dimensional closed simplex.\\
$\F kZ$& space of ordered configurations of $k$ distinct points on the variety $Z$ (see Def.~\ref{tuples}).\\
$\B kZ$& space of unordered configurations of $k$ distinct points on the variety $Z$ (see Def.~\ref{tuples}).\\
$\pm\Q$&the twisted local system over $\B kZ$ induced by the sign representation on $\pi_1(\B kZ)$. I.e. the local system $\pm\Q$ is the rank one local system that changes its orientation under paths inducing an odd permutation of the points in the configuration.\\
$\tF 4{\Pp2}$& open subset of $\F4{\Pp2}$ such that no three points in the configuration are collinear.\\
$\tB 4{\Pp2}$& open subset of $\B4{\Pp2}$ such that no three points in the configuration are collinear.\\
$\Pp2\duale$& the dual projective plane, parametrizing all projective lines in $\Pp2$.
\end{longtable}}

Throughout this paper we will make an extensive use of Borel--Moore homology, i.e. homology with locally finite support, which we will denote by the symbol $\bar H_\pu$. A reference for its definition and the properties we use is for instance \cite[Chapter 19]{Fulton}.

To write the results on cohomology and Borel--Moore homology groups in a compact way, we will express them by means of polynomials, in the following way. Let $T_\pu$ denote a graded $\Q$--vector space with mixed Hodge structures. For every $i\in\Z$, we can consider the class $[T_i]$ in the Grothendieck group of rational Hodge structures. We define the Hodge--Grothendieck  polynomial (for short, HG polynomial) of $T_\pu$ to be the polynomial
$$\wp(T_\pu)=\sum_{i\in\Z} [T_i] t^i\in K_0(\mathsf{HS_\Q})[t].$$

If moreover the symmetric group $\s_n$ acts on $T_\pu$ respecting the grading and the mixed Hodge structures on $T_\pu$, we define the \emph{$\s_n$--equivariant HG polynomial} $\wp^{\s_n}(T_\pu)$ of $T_\pu$ by replacing $ K_0(\mathsf{HS_\Q})$ by $ K_0(\mathsf{HS_\Q^{\s_n}})$ in the definition of the HG polynomial. 

\section{Setup}\label{setup}

In this section, we establish the notation we will use in the next sections. The main ingredient of our construction is the incidence correspondence $\imin$ parametrizing pairs $(P,f)$ such that $f$ is a polynomial defining a smooth plane quartic curve and $P$ is the length two subscheme in $\Pp2$ cut on the zero locus $\van f$ by a bitangent line. We consider two natural maps on $\imin$, namely the projection $\pmin\co\imin\rightarrow\operatorname{Hilb}_2(\Pp2)$ and the quotient map $\imin\rightarrow\qmin$ by the action of $\GL(3)$. 

As explained in the introduction, we stratify $\imin$ into two strata, according to whether the bitangent line $\ell_P$ is a proper bitangent or a flex bitangent. 
For a pair $(P,f)$ in $\imin$, the bitangent line $\ell_P$ is a proper bitangent if and only if $P$ is a reduced subscheme. The locus in $\operatorname{Hilb}_2(\Pp2)$ parametrizing reduced subschemes can be identified with $\B2{\Pp2}$, the configuration space of unordered pairs of points in $\Pp2$. The complement $\operatorname{Hilb_2}(\Pp2)\setminus\B2{\Pp2}$ is the locus of fat points of multiplicity two, which is naturally isomorphic to the total space of the projectivized tangent bundle $\Pp{}(T_{\Pp2})$.

Therefore, we define the open stratum $\imin_0\subset\imin$ to be the preimage of $\B2{\Pp2}$ under $\pmin$, and the stratum $\imin_\delta$ to be the preimage of $\Pp{}(T_{\Pp2})$. 
When restricted to these two strata, the map $\pmin$ is a locally trivial fibration. We will denote the restriction of $\pmin$ to the preimages of these two strata of $\operatorname{Hilb}_2(\Pp2)$ by 
$$
\pmin_0\co \imin_0\rightarrow\B2{\Pp2},
\ \ \ \ \ \ 
\pmin_\delta\co \imin_\delta\rightarrow\Pp{}(T_{\Pp2}).
$$

Note that the quotient $\qmin_0=\imin_0/\GL(3)$ is a well-defined open subset of $\qmin$, with complement the divisor $\qmin_\delta=\qmin\setminus\qmin_0=\imin_\delta/\GL(3)$. The quotient $\qmin_0$ is the moduli space of smooth quartic curves with a marked proper bitangent, whereas $\qmin_\delta$ is the moduli space of smooth quartic curves with a marked flex bitangent, i.e. a flex line with contact of order $4$ with the curve.

In the next sections, we will compute the cohomology of $\imin_0$ and $\imin_\delta$ by using their structure as fibrations given by the maps $\pmin_0$, respectively, $\pmin_\delta$. This will allow us to obtain the cohomology of the moduli spaces $\qmin_0$ and $\qmin_\delta$ by means of the following lemma.

\begin{lem}\label{division}
The following isomorphisms of graded vector spaces with mixed Hodge structures hold:
\begin{equation}\label{icLH0}\coh\pu{\imin_0}\cong \coh\pu{\qmin_0}\otimes \coh\pu{\GL(3)}.
\end{equation}
\begin{equation}\label{icLHdelta}\coh\pu{\imin_\delta}\cong \coh\pu{\qmin_\delta}\otimes \coh\pu{\GL(3)}.
\end{equation}
\end{lem}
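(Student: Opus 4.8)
The plan is to deduce Lemma~\ref{division} from the general principle of Peters--Steenbrink \cite{PS}, exactly as the analogous statement for $\imin$ was obtained in the introduction. The key point is that $\imin_0$ and $\imin_\delta$ are $\GL(3)$-invariant locally closed subvarieties of $\imin$, with $\imin_0$ open and $\imin_\delta$ its closed complement, so that the geometric quotients $\qmin_0=\imin_0/\GL(3)$ and $\qmin_\delta=\imin_\delta/\GL(3)$ are a well-defined open subvariety of $\qmin$ and its closed complement. It therefore suffices to check that the hypotheses under which \cite{PS}, combined with \cite[Theorem~5.2]{BT}, yields a splitting of the Leray spectral sequence of a $\GL(3)$-quotient are met for the quotient maps $\imin_0\to\qmin_0$ and $\imin_\delta\to\qmin_\delta$.

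Concretely, I would first recall that $\GL(3)$ acts on $\imin$ with finite stabilizers: the stabilizer of a pair $(P,f)$ is contained in the automorphism group of the smooth plane quartic $\van f$, which is finite, so in particular the same holds for the restricted actions on $\imin_0$ and $\imin_\delta$. Hence in all three cases we have a principal $\PGL(3)$-bundle up to finite groups, and the relevant quotients are geometric quotients in the category of (quasi-projective) varieties. Next I would invoke the Leray--Hirsch-type statement: for the action of $\GL(3)$ (equivalently, once one quotients by the scalars, of $\PGL(3)$) on a variety $X$ with finite stabilizers and geometric quotient $X/\GL(3)$, the rational cohomology satisfies $\coh\pu X\cong\coh\pu{X/\GL(3)}\otimes\coh\pu{\GL(3)}$ as graded vector spaces with mixed Hodge structures, provided the Leray spectral sequence of $X\to X/\GL(3)$ degenerates and the extension problem is trivial; by \cite{PS} this degeneration holds because the action extends, via the incidence correspondence picture, to a linear action on an ambient vector space (here the ambient object is $\operatorname{Hilb}_2(\Pp2)\times V$), and \cite[Theorem~5.2]{BT} supplies precisely the input needed to apply the argument of \cite{PS} in this non-linear setting. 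Applying this to $X=\imin_0$ and $X=\imin_\delta$ gives \eqref{icLH0} and \eqref{icLHdelta} respectively.

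Alternatively, and perhaps more transparently, one can derive the two isomorphisms from the one for $\imin$ already established in the introduction together with compatibility with the stratification. Since $\imin_\delta\hookrightarrow\imin$ is a closed $\GL(3)$-invariant subvariety with open complement $\imin_0$, one has long exact sequences in cohomology (or, dually, in Borel--Moore homology) both upstairs and downstairs, and the quotient map $\imin\to\qmin$ restricts compatibly to the two strata. The Leray--Hirsch class on $\imin$ restricts to Leray--Hirsch classes on $\imin_0$ and $\imin_\delta$, so the degeneration of the Leray spectral sequence for $\imin\to\qmin$ forces the degeneration for the restricted fibrations; one concludes by the five lemma applied to the morphism of long exact sequences comparing $\coh\pu{\imin_\bullet}$ with $\coh\pu{\qmin_\bullet}\otimes\coh\pu{\GL(3)}$.

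The only genuine point requiring care --- and hence the main obstacle --- is verifying that the hypotheses of \cite{PS} and \cite[Theorem~5.2]{BT} are literally applicable to $\imin_0$ and $\imin_\delta$ rather than just to the total space $\imin$: one must check that these strata are themselves of the required form (complements of discriminants inside families of linear subspaces, fibred over $\B2{\Pp2}$ and $\Pp{}(T_{\Pp2})$), which is exactly the fibration structure recorded just above for $\pmin_0$ and $\pmin_\delta$, and that the $\GL(3)$-action on them still has finite stabilizers and admits the same geometric-quotient description. Granting this, the two isomorphisms follow formally.
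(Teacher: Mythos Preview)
Your second approach---restricting Leray--Hirsch classes from $\imin$ to the two strata---contains the correct idea, but the five-lemma step is superfluous. Once you observe that the orbit inclusion $\rho_{(P,f)}\co\GL(3)\to\imin_0$ composed with the open inclusion $\imin_0\hookrightarrow\imin$ is the orbit inclusion into $\imin$, surjectivity of $\rho_{(P,f)}^*\co\coh\pu{\imin_0}\to\coh\pu{\GL(3)}$ follows immediately from the corresponding surjectivity for $\imin$, and then Theorem~\ref{LeHi} applies directly to give~\eqref{icLH0}; no long exact sequences are needed. The same works for $\imin_\delta$ via the closed inclusion. Your first approach, by contrast, is vague at the decisive point: the hypothesis of Theorem~\ref{LeHi} is precisely the surjectivity of $\rho_{(P,f)}^*$, and appealing to ``the action extends to a linear action on an ambient vector space'' does not by itself establish this.

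The paper's proof takes a slightly different and more economical route: rather than passing through $\imin$, it uses the $\GL(3)$-equivariant forgetful projection $p_0\co\imin_0\to X:=V\setminus\Sigma$, $(P,f)\mapsto f$. The orbit inclusion $\rho_f\co\GL(3)\to X$ factors as $p_0\circ\rho_{(P,f)}$, so $\rho_f^*=\rho_{(P,f)}^*\circ p_0^*$; since $\rho_f^*$ is surjective by \cite{PS} applied to the space of non-singular quartic polynomials, $\rho_{(P,f)}^*$ is surjective as well, and Theorem~\ref{LeHi} yields~\eqref{icLH0}. The argument for $\imin_\delta$ is identical. This avoids invoking the result for $\imin$ (and hence \cite[Theorem~5.2]{BT}) altogether: the only input is the classical Peters--Steenbrink result for $V\setminus\Sigma$.
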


We recall Peters-Steenbrink's generalization of the Leray-Hirsch theorem:
\begin{thm}[\cite{PS}]\label{LeHi}
Let $\phi: X\rightarrow Y$ be a geometric quotient for the action of a connected group $G$, such that for all $x\in X$ the connected component of the stabilizer of $x$ is contractible. Consider the orbit inclusion
$$\begin{matrix}\rho_{x_0}:&G&\longrightarrow&X\\&g&\longmapsto&gx_0,\end{matrix}$$
where $x_0\in X$ is a fixed point. 
Suppose that for all $k>0$ there exist classes $e_1^{(k)},\dots,e_{n(k)}^{(k)}\in\coh kX$ that restrict to a basis for $\coh kG$ under the map induced by $\rho_{x_0}$ on cohomology. Then the map $$a\otimes\rho_{x_0}^*(e_i^{(k)})\longmapsto\phi^*a\cup e_i^{(k)}$$
extends linearly to an isomorphism of graded linear spaces
$$\coh\pu Y\otimes\coh\pu G \cong \coh\pu X$$
that respects the rational mixed Hodge structures of the cohomology groups.
\end{thm}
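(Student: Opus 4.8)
The plan is to recognize $\phi\colon X\to Y$ as, rationally, a fibration with fibre $G$ and \emph{constant} Leray coefficients $H^\bullet(G;\Q)$, and then to run the classical Leray--Hirsch argument with the classes $e_i^{(k)}$ in the role of the fibrewise-surjective family. A convenient way to make ``rationally a fibration with fibre $G$'' precise is to pass to the Borel construction $EG\times_G X$, which carries the mixed Hodge structure of the equivariant cohomology $H^\bullet_G(X;\Q)$ via the usual finite-dimensional algebraic approximations of $EG$. Granting the two points below, the multiplicative structure of the spectral sequence produces the desired isomorphism, and a separate check handles the Hodge structures.

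First I would show that $q^*\colon H^\bullet(Y;\Q)\to H^\bullet_G(X;\Q)$ is an isomorphism, where $q\colon EG\times_G X\to X/G=Y$ is the natural projection. The fibre of $q$ over the class of $x$ is $EG\times_G(Gx)=EG/G_x=BG_x$; since the connected component $G_x^0$ is contractible (and $\pi_0(G_x)$ is finite in all cases of interest), the fibration $BG_x^0\to BG_x\to B\pi_0(G_x)$ shows that $BG_x$ is rationally acyclic. Using the local structure of the geometric quotient (in the algebraic setting, Luna's étale slice theorem, which presents $X\to Y$ locally in the shape $G\times_{G_x}S\to S/G_x$ when $G$ is reductive, as it is in the applications), the map $q$ is locally over $Y$ a product with the rationally acyclic factor $BG_x$; hence $R^{>0}q_*\Q=0$ and $R^0q_*\Q=\Q$, so the Leray spectral sequence of $q$ collapses and $q^*$ is an isomorphism.

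Next I would identify $\phi$ rationally with the principal $G$-bundle $\pi\colon EG\times X\to EG\times_G X$: the $G$-equivariant homotopy equivalence $EG\times X\xrightarrow{\ \sim\ }X$ descends to $q$, the restriction of $\pi$ to a fibre recovers, under this equivalence, exactly the orbit inclusion $\rho_{x_0}\colon G\to X$, and $\phi^*$ corresponds to $\pi^*q^*$. Because $G$ is connected, translations act trivially on $H^\bullet(G;\Q)$, so $R^\bullet\pi_*\Q$ is the constant sheaf $H^\bullet(G;\Q)$ and the $E_2$-page of the Serre spectral sequence of $\pi$ is $H^\bullet(EG\times_G X;\Q)\otimes H^\bullet(G;\Q)$. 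The hypothesis of the theorem says precisely that the restriction to a fibre, namely $\rho_{x_0}^*$, sends the $e_i^{(k)}$ to a basis of each $H^k(G;\Q)$; lifting the edge $E_2^{0,\bullet}$ by these global classes kills every differential out of the $0$-th column, multiplicativity then forces degeneration at $E_2$, and the filtered map
$$
a\otimes\rho_{x_0}^*(e_i^{(k)})\longmapsto \phi^*a\cup e_i^{(k)}
$$
induces an isomorphism on associated gradeds, hence is an isomorphism $H^\bullet(Y;\Q)\otimes H^\bullet(G;\Q)\xrightarrow{\ \sim\ }H^\bullet(X;\Q)$ of graded vector spaces.

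Finally, for the mixed Hodge structures one notes that $q^*$, $\pi^*$, the cup product, $\rho_{x_0}^*$ and the comparison isomorphisms for the algebraic approximations of $EG$ are all morphisms of mixed Hodge structures; therefore the displayed isomorphism is a morphism of mixed Hodge structures, and a bijective morphism of mixed Hodge structures is automatically an isomorphism of mixed Hodge structures. The main obstacle, and the only genuinely geometric input, is the first step: knowing enough about the local structure of the geometric quotient for the Leray spectral sequence of $q$ (equivalently, of $\phi$ itself) to have the expected constant coefficients. This is exactly what the slice theorem delivers; everything afterwards is the classical Leray--Hirsch machinery plus routine Hodge-theoretic bookkeeping.
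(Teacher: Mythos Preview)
The paper does not give a proof of this theorem: it is quoted verbatim as a result of Peters--Steenbrink \cite{PS} and used as a black box (see the sentence ``We recall Peters--Steenbrink's generalization of the Leray--Hirsch theorem'' immediately preceding the statement). So there is no ``paper's own proof'' to compare against.

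That said, your outline is essentially the argument of \cite{PS}: they too interpret the quotient map as, rationally, a $G$-fibration and run the Leray--Hirsch mechanism, with the compatibility with mixed Hodge structures coming from the fact that all maps involved are morphisms of MHS. Two small points deserve tightening. First, your step~1 needs $\pi_0(G_x)$ finite for $BG_x$ to be rationally acyclic; this is automatic when $G$ is a linear algebraic group acting algebraically (stabilizers are algebraic subgroups), but it is not part of the hypotheses as written, so you should say so explicitly. Second, invoking Luna's \'etale slice theorem imposes reductivity of $G$ and closedness of orbits, which again is satisfied in the applications ($\GL(3)$ acting with finite stabilizers) but is strictly stronger than what the theorem statement assumes; Peters--Steenbrink arrange the local triviality argument slightly more carefully to avoid overcommitting here. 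None of this affects the use made of the theorem in the present paper.
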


\begin{proof}[Proof of Lemma~\ref{division}]
Recall from \cite{PS} that the assumption of Theorem~\ref{LeHi} are satisfied for the action of $\GL(3)$ on the space $X:=V\setminus\Sigma$ of non-singular quartic polynomials. In particular, the map $\rho_{f}^*\co \coh\pu X\rightarrow \coh\pu{\GL(3)}$ associated to the orbit inclusion $\rho_f\co \GL(3)\rightarrow X$ is surjective for any choice of a base point $f\in X$. 

Next, let $(P,f)$ be a point of $\imin_0$ and  let us denote by $p_0\co \imin_0\rightarrow X$ the natural projection, which is clearly $GL(3)$-equivariant. 
 Then the orbit inclusion $\rho_f$ is the composition of $p_0$ and the orbit inclusion $\rho_{(P,f)}\co \GL(3)\rightarrow \imin_0$.  Hence, also the induced map $\rho_f^*$ in cohomology is the composition of $\rho_{(P,f)}^*$ and $p_0^*$. This implies that $\rho_{(P,f)}^*$ is surjective, so in particular it satisfies the assumptions of Theorem~\ref{LeHi}. This establishes the isomorphism~\eqref{icLH0}.

The proof of the isomorphism~\eqref{icLHdelta} is analogous, and requires to consider the orbit map $\rho_{(P,f)}\co \GL(3)\rightarrow\imin_\delta$ associated with a point $(P,f)\in\imin_\delta$.
\end{proof}

\section{Quartic curves with a proper bitangent}\label{introQ0}

In this section, we compute the rational cohomology of the moduli space $\qmin_0$ of pairs $(C,\tau)$ such that $C$ is a smooth quartic curve and $\tau$ a proper bitangent. 

\begin{thm}\label{cohq0}
The HG polynomial of the rational cohomology of $\qmin$ is equal to $1+t\Ll+t^5\Ll^5+2t^6\Ll^6$.
\end{thm}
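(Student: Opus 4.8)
The plan is to compute the rational cohomology of the incidence variety $\imin_0$ together with its mixed Hodge structure, and then to read off the cohomology of $\qmin_0$ by dividing by that of $\GL(3)$. Indeed, Lemma~\ref{division} gives an isomorphism $\coh\pu{\imin_0}\cong\coh\pu{\qmin_0}\otimes\coh\pu{\GL(3)}$ of graded vector spaces with mixed Hodge structures; since the cohomology of $\GL(3)$ is an exterior algebra on generators of degrees $1,3,5$ carrying the Hodge structures $\Q(-1),\Q(-2),\Q(-3)$, one has $\wp(\coh\pu{\GL(3)})=(1+\Ll t)(1+\Ll^2t^3)(1+\Ll^3t^5)$, which has constant term~$1$, and so the HG polynomial of $\coh\pu{\qmin_0}$ is the (well-defined) quotient $\wp(\coh\pu{\imin_0})/\wp(\coh\pu{\GL(3)})$. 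The task thus reduces to determining $\wp(\coh\pu{\imin_0})$.

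For this I would run the Leray spectral sequence of the fibration $\pmin_0\co\imin_0\rightarrow\B2{\Pp2}$. Fix a reduced length-two subscheme $P=\{p,q\}$ and let $\ell=\ell_P$ be the line it spans; the condition $f|_\ell\in\ideal P^2$ means that $f|_\ell$, viewed as a quartic form on $\ell$, is a scalar multiple of the square of the equation of $P$ in $\ell$, so the fibre of $\pmin_0$ over $P$ is $F_0:=L_P\setminus\Sigma$, where $L_P\subset V$ is the resulting $11$-dimensional linear subspace. (When that scalar is non-zero, $\van f$ meets $\ell$ exactly in $2p+2q$, and if in addition $\van f$ is smooth then $\ell$ is a proper bitangent; conversely every proper bitangent touching at $p$ and $q$ arises in this way.) The base cohomology is elementary: the ordered configuration space $\F2{\Pp2}$ is simply connected --- it fibres over the simply connected $\Pp2$ with simply connected fibre $\Pp2\setminus\{\text{pt}\}\simeq S^2$ --- so $\pi_1(\B2{\Pp2})=\s_2$, the local systems $\mathcal H^q:=R^q(\pmin_0)_*\Q$ are governed entirely by the $\s_2$-action on $\coh q{F_0}$ that exchanges $p$ and $q$, and $H^p(\B2{\Pp2};\mathcal H^q)\cong\bigl(\coh p{\F2{\Pp2}}\otimes\coh q{F_0}\bigr)^{\s_2}$. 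The cohomology of $\F2{\Pp2}=\Pp2\times\Pp2\setminus\Delta$ as an $\s_2$-representation with its Hodge structure comes out in a couple of lines from the Gysin sequence of the diagonal: it is of Tate type, supported in even degrees $\le6$; its $\s_2$-invariant part is $\coh\pu{\Pp2}$ (namely $\Q,\Q(-1),\Q(-2)$ in degrees $0,2,4$), and the complementary sign-type part is $\Q(-1),\Q(-2),\Q(-3)$ in degrees $2,4,6$. Hence the whole $E_2$-page is determined once $\coh\pu{F_0}$ is known as an $\s_2$-representation carrying a mixed Hodge structure.

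Computing $\coh\pu{F_0}$ is the heart of the argument; it is carried out by the Vassiliev--Gorinov method in Sections~\ref{nonrigid}--\ref{sixpts}. One stratifies the discriminant $\Sigma\cap L_P$ by the singular loci of the corresponding quartic curves, builds a conical resolution out of the configuration spaces of these singular points, extracts $\BM\pu{\Sigma\cap L_P}$ from the spectral sequence of the resolution, and converts it into $\coh\pu{F_0}$ by Alexander duality in $L_P\cong\R^{22}$ --- all the while keeping track of the involution that permutes $p$ and $q$. Already in degree one the answer is richer than one might guess, because $\Sigma\cap L_P$ is reducible: its components are the hyperplane of quartics containing $\ell$, the hyperplanes of quartics singular at $p$, respectively at~$q$ (these loci are hyperplanes because two of the three conditions for a singular point at $p$ are automatic for elements of $L_P$), and the divisor of quartics acquiring a further singular point off $\ell$. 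Thus $\coh1{F_0}\cong\Q(-1)^{\oplus4}$, with $\s_2$ interchanging the $p$-divisor and the $q$-divisor. The principal obstacle lies in the deeper strata: one has to classify which configurations of points in $\Pp2$ --- up to six points, which is why the analysis occupies several sections --- can occur as the singular set of a quartic with a prescribed order-two contact with $\ell$ at each of two fixed points, treating separately the ``non-rigid'' configurations, for which the Vassiliev--Gorinov machinery requires extra care.

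With $\coh\pu{F_0}$ in hand, I would finally analyse the spectral sequence. Every group occurring is of Tate type, so each differential is a morphism of Tate Hodge structures and can be non-zero only between terms of the same weight; this, together with the restricted shape of $H^\bullet(\B2{\Pp2};-)$, eliminates most potential differentials. The ones that remain are forced by the behaviour in low degrees --- for instance a rank-one $d_2$ is unavoidable, being needed simultaneously to cut the three-dimensional $\coh1{F_0}^{\s_2}$ down to the two-dimensional $\coh1{\imin_0}$ and to kill the base class in $\coh2{\B2{\Pp2}}\cong\Q(-1)$, so that $\coh2{\imin_0}$ ends up pure of weight~$4$. Carrying this through on the whole page yields $\wp(\coh\pu{\imin_0})$, and dividing by $(1+\Ll t)(1+\Ll^2t^3)(1+\Ll^3t^5)$ produces $\wp(\coh\pu{\qmin_0})=1+\Ll t+\Ll^5t^5+2\Ll^6t^6$. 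The real work, and the real difficulty, is the Vassiliev--Gorinov computation of $\coh\pu{F_0}$ above; once that is available, the base cohomology, the spectral sequence, and the division by $\coh\pu{\GL(3)}$ are comparatively routine.
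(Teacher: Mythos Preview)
Your overall architecture---compute $\coh\pu{\imin_0}$ via the Leray spectral sequence of $\pmin_0$, then divide by $\coh\pu{\GL(3)}$ using Lemma~\ref{division}---is exactly the paper's. The execution, however, differs in two respects that matter.

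First, the paper does \emph{not} compute the full fibre cohomology $\coh\pu{F_0}$ and then run Leray. Instead it separates the Vassiliev--Gorinov strata into non-rigid and rigid configurations. Non-rigid strata are treated fibrewise and fed into the Leray spectral sequence (Table~\ref{first-leray}); rigid strata are handled directly at the level of the global discriminant $\mathcal D^-_0$ via the ``fibred'' configuration spaces $X'_j\subset\B2{\Pp2}\times\B k{\Pp2}$ (Sections~\ref{rigid}--\ref{sixpts}). The point is that for rigid configurations $\BM\pu{X'_j}$ is automatically a tensor product with $\BM\pu{\PGL(3)}$, so the rigid contribution to $\coh\pu{\imin_0}$ comes out already packaged as a multiple of $\coh\pu{\GL(3)}$ (Proposition~\ref{rigid-summary}). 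Trying to compute the rigid part fibrewise, as you propose, is harder and not what Sections~\ref{nonrigid}--\ref{sixpts} actually do.

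Second, and more seriously, your mechanism for pinning down the Leray differentials is underspecified. Weight arguments alone do not suffice: in Table~\ref{first-leray} many potential $d_2$'s go between terms of equal weight. Your low-degree example presupposes that $\coh1{\imin_0}$ is two-dimensional and that $\coh2{\imin_0}$ is pure of weight~$4$, but these are consequences of the result being proved, not independent inputs. The paper's device is different and decisive: it uses the tensor-product structure $\coh\pu{\imin_0}\cong\coh\pu{\qmin_0}\otimes\coh\pu{\GL(3)}$ from Lemma~\ref{division} as a \emph{constraint}. Divisibility by $\wp(\coh\pu{\GL(3)})$ is a very rigid condition on the $E_\infty$ page; the paper checks that it forces all $d_2$'s in Table~\ref{first-leray} to have maximal rank and all connecting maps $\delta_k$ between rigid and non-rigid contributions to vanish (Lemma following Proposition~\ref{rigid-summary}). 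You should bring this divisibility constraint to the foreground rather than treat the spectral sequence and the division as independent routine steps.
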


It is known from~\cite{Harer} that the rational cohomology of $\qmin$ is trivial in degree~$1$. This implies that the cohomology class in $\coh1{\qmin_0}=\Q(-1)$ is killed by $\coh0{\qmin_\delta}=\Q$. In Theorem~\ref{cohqdelta} we will prove that this is also the only non-trivial rational cohomology group of $\qmin_\delta$. This immediately implies Theorem~\ref{main} as a corollary.

We start by considering the fibre of the map $\pmin_0\co \imin_0\rightarrow\B2{\Pp2}$ over a configuration $\{p,q\}$ of distinct points in $\Pp2$. Denote by $t$ the line $pq$ and set $t^*=t\setminus\{p,q\}.$ 
Consider the $11$-dimensional complex vector space 
$$V_{p,q}:=\left\{
f\in V\left| \begin{array}{c} 
\text{ the line $pq$ is either contained in $\van f$ or it}\\
\text{is tangent to $\van f$ at the points $p$ and $q$} \end{array}\right.\right\}.$$

Then the fibre $(\pmin_0)^{-1}(\{p,q\})$ is equal to $V_{p,q}\setminus\Sigma$. Hence, the fibre of $\pmin_0$ can be viewed as the complement of the discriminant in the vector space $V_{p,q}$. In particular, its cohomology can be computed using Vassiliev--Gorinov's method.

To apply Vassiliev--Gorinov's method to $V_{p,q}\cap\Sigma$ we need an ordered list of all possible singular sets of the elements in $V_{p,q}\cap\Sigma$. We obtain such a list by refining the list of possible singular configurations of quartic curves in \cite[Prop.~6]{Vart}. For the convenience of the reader, we copied this list in Table~\ref{Vaslist}. In the right-hand side column of that table one can read the dimension of the space of quartic polynomials which are singular at any fixed configuration of the corresponding type.

For every configuration in Table~\ref{Vaslist}, one has to distinguish further whether the singular points are or are not in general position with respect to $p$ and $q$ (for instance, if the singular configuration intersects or not the line $t:=pq$). This procedure yields a complete list of singular sets of elements of $V_{p,q}\cap\Sigma$, which we will describe in Sections~\ref{nonrigid} and~\ref{rigid}. For every type $j$ of singular configurations, we will denote by $X_j$ the space of all configurations of type $j$.

As recalled in Section~\ref{VGmethod}, Vassiliev--Gorinov's method gives a recipe to construct spaces $\ba\mathcal X\ba$ and $\ba\Lambda\ba$ and a map
$$\ba\epsilon\ba\co\ba\mathcal X\ba\rightarrow V_{p,q}\cap \Sigma$$
inducing an isomorphism in rational Borel--Moore homology. In the version of the method we use in this paper, the spaces $\ba\mathcal X\ba$ and $\ba\Lambda\ba$ are constructed as the geometric realizations of certain cubical spaces associated to the ordered list of singular sets.
The Borel--Moore homology of $\ba\mathcal X\ba$ (respectively, $\ba\Lambda\ba$) can be computed by considering the stratification $F_\pu$ (resp. $\Phi_\pu$), which is indexed by the types of configurations in the list. The properties of $F_{j}$ and $\Phi_j$ associated to the configuration type $j$ are explained in Proposition~\ref{ucci}. Recall in particular that $F_j$ is the total space of a vector bundle over $\Phi_j$, and that for finite configurations $\Phi_j$ is the total space of a (non-orientable) bundle in open simplices over the configuration space $X_j$. As a consequence, for finite configurations the Borel--Moore homology of $\Phi_j$ coincides (after a shift in the indices) with the Borel--Moore homology of $X_j$ with coefficients in a rank $1$ local system changing its orientation every time two points in a configuration are interchanged.  We will call this local system the \emph{twisted local system} $\pm\Q$.

\begin{table}
\caption{\label{Vaslist}Singular sets in $\Pp2$ of quartic homogeneous polynomials according to \cite[Prop.~6]{Vart}.}

\begin{tabular}{rp{10cm}|l}
1&Any points in $\Pp2$&$\C^{12}$\\
2&Any pair of points in $\Pp2$&$\C^9$\\
3&Any three points on the same line in $\Pp2$&$\C^7$\\
4&Any triple of non-collinear points in $\Pp2$&$\C^6$\\
5&Any line in $\Pp2$& $\C^6$\\
6&Any three points on the same line $\ell$ plus a point outside $\ell$&$\C^4$\\
7&Any quadruple of points, no three of them collinear&$\C^3$\\
8&The union of a line in $\Pp2$ and a point outside it&$\C^3$\\
9&Five points $\{a,b,c,d,e\}$ such that $\{e\} = {ab}\cap{cd}$.&$\C^2$\\
10&Six points which are the pairwise intersection of four lines in general position&$\C$\\
11&Any non-singular quadric in $\Pp2$&$\C$\\
12&The union of two lines in $\Pp2$&$\C$\\
13&The entire projective plane&$0$\\
\end{tabular}
\end{table}

It is also possible to compute directly the cohomology of $\imin_0$, without having to pass through the study of the fibre of $\pmin_0$. 
Namely, consider the space
$$\mathcal D^-_0:=\left\{
\begin{array}{c}(\{\alpha,\beta\},f)\in\F2{\Pp2}\times\Sigma: 
\text{ the line $\alpha\beta$ is either contained}\\
\text{in $\van f$ or it is tangent to it at the points $\alpha$ and $\beta$} \end{array}\right\}.$$

Note that $\mathcal D^-_0$ is a closed subset of 
$$\mathcal V^-_0:=\left\{\begin{array}{c}(\{\alpha,\beta\},f)\in\B2{\Pp2}:\text{ the line $\alpha\beta$ is either contained}\\
\text{in $\van f$ or it is tangent to it at the points $\alpha$ and $\beta$}\end{array}\right\}.$$
 The space $\mathcal V^-_0$ is the total space of a vector bundle over $\Pp2$, and $\imin_0=\mathcal V^-_0\setminus\mathcal D^-_0$. 
Vassiliev-Gorinov's method can be exploited to compute the Borel--Moore homology of $\mathcal D^-_0$. This is done by defining the singular locus of an element $(\{\alpha,\beta\},f)$ in $\mathcal D^-_0$ as the subset $\left\{\{\alpha,\beta\}\right\}\times K_f$ of $\B2{\Pp2}\times \Pp2$, where $K_f$ denotes the singular locus of the polynomial $f$. In particular, the classification of singular sets of elements of $\mathcal D^-_0$ is obtained by the classification of singular sets of elements of $V_{p,q}\setminus\Sigma$ by allowing the pair $\{p,q\}$ to move in $\B2{\Pp2}$.

Even though this is no longer the original setting of Vassiliev-Gorinov's method, one can mimic the construction of the cubical spaces $\Lambda$ and $\mathcal X$ (see Section~\ref{VGmethod}) and obtain cubical spaces $\Lambda'$ and $\mathcal X'$ that play an analogous role. In particular, the map $\ba\mathcal X'\ba\rightarrow \mathcal D^-_0$ induces an isomorphism on the Borel--Moore homology of these spaces, because it is a proper map with contractible fibres.
Moreover, for the stratifications $\Phi'_\pu$ and $F'_\pu$ obtained from the construction of $\Lambda'$ and $\mathcal X'$ we have natural maps $\Phi'_k\rightarrow\B2{\Pp2}$ and $F'_k\rightarrow\B2{\Pp2}$ which are locally trivial fibrations with fibre isomorphic to $\Phi_k$, respectively, $F_k$. 

In the next sections, we proceed by giving the classification of the singular sets in $\Pp2$ of quartic curves that are tangent to $t$ at $p$ and $q$. These are exactly the singular sets of the elements of $V_{p,q}\cap\Sigma$. In view of the discussion above, this classification also yields the classification of the singular sets in $\B2{\Pp2}\times\Pp2$ of the elements of $\mathcal D^-_0$.

Before giving the refined list, we briefly comment about which types of singular configurations will arise. A first distinction is between configurations containing a finite number of points versus configurations containing curves. In the specific case of plane quartics, singular curves are always rational. In particular, one can apply Lemma~\cite[2.17]{OTM4} (and the remarks following it) to conclude that all strata $\Phi_j$ and $F_j$ have trivial Borel--Moore homology for $j$ a type of configuration which contain rational curves. Hence, it is important to concentrate on finite configurations.

A further distinction is whether the stabilizer of a general configuration of type~$j$ in $\PGL(3)$ is finite or not. We will call configuration types with finite (resp., infinite) stabilizer \emph{rigid configurations} (resp., \emph{non-rigid} configurations). 
Typically, non-rigid finite configurations will contain few singular points which will be relatively free to move. Anyway, it is important to notice that non-rigid configurations will give a non-trivial contribution only if they contain very few points. This follows from Lemma~\ref{lem1}, which ensures that the twisted Borel--Moore homology of configurations of more than one point in affine space vanishes in all degrees, and that the same is true for $\B k{\Pp1}$ for $k\geq 3$ and $\B k{\Pp2}$ for $k\geq 4$. 
We will deal with non-rigid configurations in Section~\ref{nonrigid}.

The main result is the following:
\begin{prop}\label{nonrigid-summary}
Let us denote by $F_{\num{nrig}}\subset\ba\mathcal X\ba$ the union of the strata corresponding to non-rigid configurations (for the precise definition of these, see Sect.~\ref{list-nonrigid}). Then the $\s_2$-equivariant HG polynomial of $F_{\num{nrig}}$ with respect to the $\s_2$-action generated by the interchange of the points $p$ and $q$ is given by
$$
(3\inv+\ant)t^{20}\Ll^{-10}+
(3\inv+3\ant)t^{19}\Ll^{-9}+
(\inv+3\ant)t^{18}\Ll^{-8}+
\ant t^{20}\Ll^{-7}.
$$
\end{prop}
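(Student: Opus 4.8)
The plan is to enumerate the non-rigid finite singular configurations of quartics tangent to $t$ at $p$ and $q$, compute the $\s_2$-equivariant twisted Borel--Moore homology of each associated stratum $F_j$, and assemble the results into the stated HG polynomial. By the discussion preceding the statement, only finite non-rigid configurations with very few points can contribute: Lemma~\ref{lem1} kills the twisted Borel--Moore homology of configurations of more than one point in affine space, of $\B k{\Pp1}$ for $k\geq 3$, and of $\B k{\Pp2}$ for $k\geq 4$. So I would first go through Table~\ref{Vaslist} and retain only the low-cardinality types whose general configuration has positive-dimensional $\PGL(3)$-stabilizer — essentially type~1 (a single point), type~2 (a pair of points), and the borderline type~3/type~5 cases — and then refine each according to the position of the singular locus relative to $p$, $q$ and the line $t=pq$ (e.g. singular point on $t$ versus off $t$, at $p$ or $q$ versus at a third point of $t$, etc.). Each refinement gives a locally closed configuration space $X_j$, over which $\Phi_j$ is a bundle in open simplices and $F_j$ a vector bundle; the rank of the vector bundle is dictated by the dimension of the space of polynomials in $V_{p,q}$ singular along that configuration, which I extract from the last column of Table~\ref{Vaslist} corrected by the tangency conditions at $p$ and $q$.

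Next I would compute, for each retained type $j$, the $\s_2$-equivariant HG polynomial of $F_j$. By Proposition~\ref{ucci} (invoked as an assumed result) $F_j$ is a vector bundle over $\Phi_j$, and $\Phi_j$ is an open-simplex bundle over $X_j$, so $\wp^{\s_2}(F_j)$ equals $\wp^{\s_2}(\BM{\ast}{X_j;\pm\Q})$ up to a Tate twist $\Ll^{-d}$ recording the vector bundle rank and a degree shift recording the simplex dimension; the $\s_2$-action is the one permuting $p$ and $q$. The configuration spaces $X_j$ occurring here are very simple — open subsets of $\Pp2$, of $\Pp2$ minus a line, of $t^\ast=t\setminus\{p,q\}\cong\C^\ast$, of $\C$, and the like — so their twisted Borel--Moore homology is elementary to write down, and the $\s_2$-representation on each (which is trivial, $\inv$, or $\ant=\schur{1,1}$) is read off from whether the geometric data is symmetric or antisymmetric under swapping $p$ and $q$. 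Summing these contributions, keeping careful track of the cohomological degree (which, because we are measuring Borel--Moore homology of subvarieties of $\ba\mathcal X\ba$ and then dualizing, lands in the high range $t^{18}$–$t^{20}$) and of the negative powers of $\Ll$, should produce exactly $(3\inv+\ant)t^{20}\Ll^{-10}+(3\inv+3\ant)t^{19}\Ll^{-9}+(\inv+3\ant)t^{18}\Ll^{-8}+\ant t^{20}\Ll^{-7}$.

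The main obstacle will be the bookkeeping in the enumeration step: making sure the refined list of non-rigid singular sets of elements of $V_{p,q}\cap\Sigma$ is genuinely complete and non-redundant, and that for each type the dimension count for the corresponding space of polynomials is correct once the forced tangency of $\van f$ to $t$ at $p$ and $q$ is taken into account (this shifts several of Vassiliev's generic dimensions, and some configurations of Table~\ref{Vaslist} that are non-rigid in $V$ may split into several inequivalent strata in $V_{p,q}$ or may fail to occur at all). A secondary subtlety is the spectral-sequence / gluing argument: the strata $F_j$ assemble into $F_{\num{nrig}}\subset\ba\mathcal X\ba$ via the filtration $F_\pu$, so one must check that the associated spectral sequence computing $\BM{\ast}{F_{\num{nrig}}}$ degenerates, or at least that the differentials do not change the alternating sum — this is where the purity of the relevant mixed Hodge structures and a weight or degree-parity argument comes in. Once the list is pinned down, each individual computation is routine given Lemma~\ref{lem1}, Proposition~\ref{ucci}, and the standard properties of the twisted local system $\pm\Q$ recalled above.
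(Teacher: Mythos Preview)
Your overall strategy matches the paper's: refine Vassiliev's list according to the position of the singular locus relative to $p$, $q$ and $t$, compute the $\s_2$-equivariant Borel--Moore homology of each surviving $F_j$ via Proposition~\ref{ucci}, and show the associated spectral sequence degenerates at $E^1$ by a Hodge-weight argument (the paper observes that for each nonzero $\Ee^1_{u,v}$ the Hodge structure is pure of weight $-2(u+v-10)$, so source and target of every $d_r$ have different weights).

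There is, however, a concrete gap in your enumeration. You list ``essentially type~1, type~2, and the borderline type~3/type~5 cases'' as the non-rigid types. Type~4 (three non-collinear points) is also non-rigid: its stabilizer in $\PGL(3)$ is a two-dimensional torus. In the paper's refined list the substrata $4\num a$ (the pair $\{p,q\}$ together with a point off $t$) and $4\num b$ (another pair on $t$ together with a point off $t$) contribute nontrivially --- they supply precisely the terms $\ant t^{18}\Ll^{-8}$, $\inv t^{18}\Ll^{-8}$ and $\ant t^{17}\Ll^{-7}$ in the final sum. By contrast, the types~3, 5 and 6 you anticipate worrying about all die (three collinear points or a line force a $\B3{\Pp1}$- or rational-curve argument), so they cost no effort but also contribute nothing. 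Without type~4 your sum will be short exactly those three terms, and the degree-$18$ and degree-$17$ pieces of the HG polynomial will come out wrong.

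Once you include the type~4 refinements, the rest of your plan --- elementary computation of $\BM[\pm\Q]\pu{X_j}$ for each simple $X_j$, tracking the $\s_2$-character under $p\leftrightarrow q$, and the weight-based degeneration --- goes through exactly as in the paper.
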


At the other end of the spectrum one finds rigid configurations. As we will see in Section~\ref{rigid}, if configurations of type $j$ are rigid, then the Borel--Moore homology of the strata $\Phi'_j$ and $F'_j$ is automatically a tensor product of the Borel--Moore homology of $\PGL(3)$. For this reason, for rigid configurations it is practical to work directly with the configuration space $X'_j\subset\B 2{\Pp2}\times\Pp2$ rather than with the configuration space $X_j\subset\Pp2$. As we explained above, the relationship between the two is that $X'_j$ is fibred over $\B2{\Pp2}$ with fibre isomorphic to $X_j$. 
We will investigate the contribution of rigid configurations in Section~\ref{rigid}, where we will prove the following
\begin{prop}\label{rigid-summary}
The HG polynomial of $F'_{\num{rig}}:=\ba\mathcal X'\ba\setminus F'_{\num{nrig}}$ equals
$$
t^5(1+2t\Ll)\cdot\wp(\BM\pu{\GL(3)}).
$$
\end{prop}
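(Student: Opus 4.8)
The plan is to compute the Borel--Moore homology of $\ba\mathcal X'\ba\setminus F'_{\num{nrig}}$ by analysing each rigid configuration type separately, using the fibration structure $F'_j\to\B2{\Pp2}$ with fibre $F_j$ described in Section~\ref{introQ0}, together with the fact — to be established in Section~\ref{rigid} — that for a rigid configuration type $j$ the configuration space $X_j$ has stabilizer in $\PGL(3)$ finite, so that $X_j$ is (up to finite quotient) an open subset of $\PGL(3)$ and its Borel--Moore homology is, up to Tate twists and shifts, a direct summand of $\wp(\BM\pu{\PGL(3)})$. First I would go through Table~\ref{Vaslist} and keep only the finite, rigid types: the curves (types~$5$, $8$, $11$, $12$, $13$) contribute nothing by the cited Lemma~\cite[2.17]{OTM4}; among the finite types, the non-rigid ones (few freely-moving points: types~$1$--$4$ in the relevant degenerations, handled in Proposition~\ref{nonrigid-summary}) are removed by hypothesis; what remains are the genuinely rigid finite configurations, namely types~$6$, $7$, $9$ and~$10$, after imposing on each the further ``general position with respect to $p,q$'' condition that puts them in $X'_j\subset\B2{\Pp2}\times\Pp2$.

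Next, for each such type $j$ I would identify $X'_j$ concretely and compute its twisted Borel--Moore homology. The key structural point is that once the stabilizer is finite, $\PGL(3)$ acts on $X_j$ with finite stabilizers and (generically) transitively on a dense open, so $\BM\pu{X_j;\pm\Q}$ is computed from $\BM\pu{\PGL(3)}$ by taking invariants/coinvariants under the relevant finite symmetry group and twisting by the sign character where points are permuted; combined with the vector-bundle structure $F'_j\to\Phi'_j$ (a shift by $2\,\mathrm{rk}$ and a Tate twist, with the rank read off the right-hand column of Table~\ref{Vaslist}) and the open-simplex-bundle structure $\Phi'_j\to X'_j$ (a further shift recording the number of points in the configuration), this pins down $\wp(\BM\pu{F'_j})$ as an explicit multiple of $\wp(\BM\pu{\GL(3)})$ — here one uses $\BM\pu{\GL(3)}\cong\BM\pu{\PGL(3)}\otimes\BM\pu{\mathbb G_m}$ so the bookkeeping can be done with $\GL(3)$ throughout. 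Then I would assemble the main spectral sequence: the stratification $F'_\pu$ of $\ba\mathcal X'\ba\setminus F'_{\num{nrig}}$ by these finitely many rigid strata gives a spectral sequence with $E^1$-term $\bigoplus_j \BM\pu{F'_j}$ converging to $\BM\pu{\ba\mathcal X'\ba\setminus F'_{\num{nrig}}}$, and since every term is a free module over $\wp(\BM\pu{\GL(3)})$ I would argue the differentials are $\wp(\BM\pu{\GL(3)})$-linear, so the whole computation reduces to a computation ``modulo $\wp(\BM\pu{\GL(3)})$'', i.e. on the level of the coefficient $t^5(1+2t\Ll)$. Finally, passing from $\BM\pu{\ba\mathcal X'\ba}$ to $\coh\pu{\imin_0}$ via $\imin_0=\mathcal V^-_0\setminus\mathcal D^-_0$, the long exact sequence of the pair, and Poincar\'e--Lefschetz duality on the smooth space $\mathcal V^-_0$ (the total space of a rank-$10$ bundle over $\Pp2$), reorganises the answer into cohomological degrees and yields the stated polynomial $t^5(1+2t\Ll)\cdot\wp(\BM\pu{\GL(3)})$ for $F'_{\num{rig}}$.

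I expect the main obstacle to be twofold. The first difficulty is the precise determination, for the borderline types~$9$ and~$10$, of the twisted Borel--Moore homology $\BM\pu{X'_j;\pm\Q}$: one must check that the incidence conditions (``$e=ab\cap cd$'' for type~$9$, the six pairwise intersections of four general lines for type~$10$) cut out a space on which $\PGL(3)$ acts with the expected finite stabilizers, and correctly account for the action of the combinatorial symmetry group of the configuration on the sign twist $\pm\Q$ — a sign error here propagates directly into the coefficient $1+2t\Ll$. The second, more delicate, difficulty is controlling the differentials in the stratification spectral sequence: rather than computing them by hand, I would use the $\wp(\BM\pu{\GL(3)})$-module structure to reduce to a small finite computation, but verifying that the reduction is legitimate — that all the relevant maps are genuinely module maps and that no extension problems obstruct reading off the polynomial — is where the real work lies.
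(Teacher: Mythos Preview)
Your proposal has a genuine gap: you dismiss configuration type~$13$ as contributing trivially, grouping it with types~$5$, $8$, $11$, $12$ under ``curves'' to which Lemma~2.17 of \cite{OTM4} applies. This is incorrect. Type~$13$ is the configuration $\{\Pp2\}$ itself, not a configuration containing a rational \emph{curve}, and that lemma does not cover it. By Proposition~\ref{ucci}.\ref{opeco} the last stratum $F'_{13}$ is an open cone over $\ba\Lambda'\ba\setminus\Phi'_{13}$, and its Borel--Moore homology must be computed from that base. The paper carries this out in Lemma~\ref{hom13} and finds a nontrivial contribution, $\wp(\BM\pu{F'_{13}})=t^6\,\wp(\BM\pu{\PGL(3)})$; without it you cannot recover the coefficient $t^5(1+2t\Ll)$. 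There are two smaller inaccuracies in your list as well: type~$6$ is not rigid (three collinear points plus one further point have a one-dimensional stabilizer in $\PGL(3)$, and the paper places all of type~$6$ among the non-rigid configurations in Table~\ref{list-nonrigid}); and although type~$7$ is rigid, every refined substratum $7\num a$--$7\num d$ turns out to have vanishing twisted Borel--Moore homology (Table~\ref{rigid-7}, Lemmas~\ref{hom7c} and~\ref{X'_7d}), which requires genuine case-by-case work rather than the stabilizer heuristic alone. The strata that actually contribute are those of types~$9$, $10$ and~$13$.

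Once the correct list is in hand, the paper's argument is close in spirit to what you outline: each contributing $F'_j$ has Borel--Moore homology free over $\BM\pu{\PGL(3)}$, the differentials of the stratification spectral sequence respect this structure, and the only potentially nonzero $d^1$ (between $F'_{10}$ and $F'_{9\num e}$, see Table~\ref{second}) is shown to be an isomorphism by reducing to a vanishing already established inside the proof of Lemma~\ref{hom13}. Your closing paragraph, however, conflates this proposition with the later deduction of $\coh\pu{\imin_0}$: Proposition~\ref{rigid-summary} concerns only $\BM\pu{F'_{\num{rig}}}$, and the passage through $\mathcal V^-_0$ and duality belongs to the proof of Theorem~\ref{cohq0} (incidentally, $\mathcal V^-_0$ is a rank-$11$ bundle over $\B2{\Pp2}$, not rank~$10$ over $\Pp2$).
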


\begin{lem}\label{b2p2}
The Borel--Moore homology with constant (respectively, twisted) rational coefficients of the space $\B2{\Pp2}$ of unordered configurations of $2$ distinct points on $\Pp2$ is given by 
$$
\wp(\BM\pu{\B2{\Pp2}})=(1+t^2\Ll^{-1}+t^4\Ll^{-2})t^4\Ll^{-2},$$
respectively, by
$$
\wp(\BM[\pm\Q]\pu{\B2{\Pp2}})=(1+t^2\Ll^{-1}+t^4\Ll^{-2})t^2\Ll^{-1}.
$$
\end{lem}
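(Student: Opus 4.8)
The plan is to compute the Borel--Moore homology of $\B2{\Pp2}$ by relating it to the ordered configuration space $\F2{\Pp2}$ and using the natural double cover $\F2{\Pp2}\to\B2{\Pp2}$. First I would observe that $\F2{\Pp2}$ is the complement of the diagonal in $\Pp2\times\Pp2$. The diagonal $\Delta\cong\Pp2$ is a smooth closed subvariety of codimension $2$, so there is a long exact sequence in Borel--Moore homology
\begin{equation*}
\cdots\to\BM k{\Delta}\to\BM k{\Pp2\times\Pp2}\to\BM k{\F2{\Pp2}}\to\BM{k-1}{\Delta}\to\cdots
\end{equation*}
By the K\"unneth formula $\wp(\BM\pu{\Pp2\times\Pp2})=(1+t^2\Ll+t^4\Ll^2)^2t^{8}\Ll^{4}$ in the normalization where $\BM{2d}{\Pp d}=\Q(-d)$ (equivalently, weights matching the cohomological ones up to the shift), while $\wp(\BM\pu{\Delta})=(1+t^2\Ll^{-1}+t^4\Ll^{-2})t^{4}\Ll^{-2}$. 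Here I must be careful with the normalization used in the statement: the polynomials in Lemma~\ref{b2p2} are written so that $\BM{2d}{\Pp d}$ contributes $t^{2d}\Ll^{-d}$, i.e. the convention of the paper is to record $\bar H_k$ with a Tate twist reflecting real codimension rather than the classical one. I would therefore fix this convention once and translate the K\"unneth and Gysin data accordingly, then read off $\wp(\BM\pu{\F2{\Pp2}})$. One checks the Gysin map $\BM k{\Delta}\to\BM k{\Pp2\times\Pp2}$ is injective in each degree (it is dual to the restriction $\coh\pu{\Pp2\times\Pp2}\to\coh\pu{\Delta}$, which is surjective since $\coh\pu{\Pp2\times\Pp2}\to\coh\pu\Delta$ sends both hyperplane classes to the hyperplane class of $\Delta$), so the long exact sequence splits into short exact sequences and
\begin{equation*}
\wp(\BM\pu{\F2{\Pp2}})=\wp(\BM\pu{\Pp2\times\Pp2})-\wp(\BM\pu{\Delta}).
\end{equation*}

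Next I would pass from $\F2{\Pp2}$ to $\B2{\Pp2}$ using the free $\s_2$-action that swaps the two points. Since we work with rational coefficients, $\BM\pu{\B2{\Pp2};\Q}$ is the $\s_2$-invariant part of $\BM\pu{\F2{\Pp2};\Q}$, and $\BM\pu{\B2{\Pp2};\pm\Q}$ is the sign-isotypic (anti-invariant) part. So the task reduces to computing the action of the transposition on $\BM\pu{\F2{\Pp2})}$. For this I would use the two short exact sequences above, which are $\s_2$-equivariant: $\s_2$ acts on $\Pp2\times\Pp2$ by swapping factors and fixes $\Delta$ pointwise. On $\BM\pu{\Pp2\times\Pp2}=\BM\pu{\Pp2}\otimes\BM\pu{\Pp2}$ the swap exchanges tensor factors, so in degree $k$ the invariants form the symmetric part and the anti-invariants the exterior part of the graded vector space $\BM\pu{\Pp2}^{\otimes2}$; concretely, writing $h^i$ for the class in $\BM{2i}{\Pp2}$, the classes $h^i\otimes h^j+h^j\otimes h^i$ are invariant and $h^i\otimes h^j-h^j\otimes h^i$ ($i<j$) are anti-invariant. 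On $\BM\pu{\Delta}$ the action is trivial, and the Gysin image sits inside the invariants (the normal bundle of $\Delta$ in $\Pp2\times\Pp2$ is $T_{\Pp2}$, on which the swap acts by $-1$, but that only affects the Thom class sign, not the $\s_2$-module structure on homology after passing to the quotient of the long exact sequence — this is the one point to verify carefully). Taking invariants and anti-invariants of the short exact sequences then yields the two claimed polynomials.

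The main obstacle, and the step that needs the most care, is bookkeeping the Tate twists and the $\s_2$-action through the Gysin sequence simultaneously, in particular making sure the sign with which $\s_2$ acts on the normal bundle of the diagonal does not introduce a spurious sign twist in the identification $\BM\pu{\B2{\Pp2};\pm\Q}=\BM\pu{\F2{\Pp2})}^{\mathrm{sgn}}$. Once the convention is pinned down, the computation is a short explicit linear-algebra bookkeeping: the invariant part of $(1+t^2\Ll^{-1}+t^4\Ll^{-2})^{\otimes2}$ minus $(1+t^2\Ll^{-1}+t^4\Ll^{-2})$ (suitably shifted) gives $(1+t^2\Ll^{-1}+t^4\Ll^{-2})t^4\Ll^{-2}$, and the anti-invariant part gives $(1+t^2\Ll^{-1}+t^4\Ll^{-2})t^2\Ll^{-1}$, matching the statement. (Alternatively, one could avoid the Gysin sequence entirely by using the known description of $\B2{\Pp2}$ as a $\Pp2$-bundle over $\Pp2\duale$ — the projection sending an unordered pair to the line through it — together with the fact that the fibre is $\Pp2$ minus a conic; I would keep this as a cross-check on the answer.)
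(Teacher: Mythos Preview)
Your approach is correct but takes a different route from the paper. The paper's proof is a two-line argument using the fibration $\B2{\Pp2}\to\Pp2\duale$, $\{p,q\}\mapsto pq$, with fibre $\B2{\Pp1}$; it then invokes Lemma~\ref{lem1} for $\wp(\BM\pu{\B2{\Pp1}})=t^4\Ll^{-2}$ and $\wp(\BM[\pm\Q]\pu{\B2{\Pp1}})=t^2\Ll^{-1}$, and multiplies by $\wp(\BM\pu{\Pp2\duale})=1+t^2\Ll^{-1}+t^4\Ll^{-2}$. This is exactly the ``cross-check'' you mention at the end, and it is in fact the shorter path.

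Your primary argument via the Gysin sequence for $\Delta\subset\Pp2\times\Pp2$ followed by the $\s_2$-isotypic decomposition of $\BM\pu{\F2{\Pp2}}$ also works, and has the mild advantage of not relying on Lemma~\ref{lem1}. One remark: your worry about the sign of the $\s_2$-action on the normal bundle of the diagonal is a red herring in Borel--Moore homology. The map $i_*\co\BM k\Delta\to\BM k{\Pp2\times\Pp2}$ is the proper pushforward along the closed inclusion and does not involve a Thom class; since the swap fixes $\Delta$ pointwise and $i$ is $\s_2$-equivariant, $i_*$ automatically lands in the invariants. (The normal-bundle sign would matter if you were using the cohomological Gysin map $\coh{k}{\Delta}\to\coh{k+4}{\Pp2\times\Pp2}$, but you are not.) With that simplification your bookkeeping becomes straightforward: writing $x=t^2\Ll^{-1}$, the invariant part of $(1+x+x^2)^{\otimes2}$ is $1+x+2x^2+x^3+x^4$, subtracting $1+x+x^2$ gives $x^2(1+x+x^2)$, and the anti-invariant part is $x(1+x+x^2)$, as claimed.
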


\proof
The space $\B2{\Pp2}$ is fibred over the space $\Pp2\duale$ of lines in $\Pp2$ by the map $\{p,q\}\rightarrow pq$, which is $\s_2$-equivariant. The fibre is isomorphic to the configuration space $\B2{\Pp1}$. Then the claim follows from $\wp(\BM\pu{\B2{\Pp1}})=t^4\Ll^{-2}$ and $\wp(\BM[\pm\Q]\pu{\B2{\Pp1}})=t^2\Ll^{-1}$ (see Lemma~\ref{lem1}).
\qed

The proof of Theorem~\ref{cohq0} follows from the last two parts of the following lemma.

\begin{lem}
\begin{enumerate}
\item\label{q0-uno}
 The differentials $\delta_k$ in the long exact sequence in Borel--Moore homology
\begin{equation}\label{les-nrig}
\cdots\rightarrow
\BM {k+1}{\mathcal D^-_0}\rightarrow
\BM {k+1}{F'_{\num{rig}}}\xrightarrow{\delta_k}
\BM {k}{F'_{\num{nrig}}}\rightarrow
\BM k{\mathcal D^-_0}\rightarrow\cdots
 \end{equation}
associated with the inclusion $F'_{\num{nrig}}\subset\ba\mathcal X'\ba$ and the augmentation $\epsilon'\co\ba\mathcal X'\ba\rightarrow\mathcal D^-_0$ vanish for all indices $k$.
\item\label{q0-due}
The contribution of non-rigid configurations to the cohomology of $\imin_0$ has HG polynomial $1+t\Ll$.
\item\label{q0-tre}
The contribution of rigid configurations to the cohomology of $\imin_0$ has HG polynomial $t^5\Ll^5+2t^6\Ll^6$.
\end{enumerate}
\end{lem}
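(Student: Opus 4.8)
The plan is to combine the two ``summary'' propositions with the Vassiliev--Gorinov spectral sequence and the fibration structure of $\mathcal V^-_0$ over $\Pp2$. Parts~\eqref{q0-due} and~\eqref{q0-tre} are essentially bookkeeping once part~\eqref{q0-uno} is known, so the bulk of the work is establishing that all differentials $\delta_k$ vanish.

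For part~\eqref{q0-uno}, I would start from the long exact sequence~\eqref{les-nrig} coming from the closed inclusion $F'_{\num{nrig}}\subset\ba\mathcal X'\ba$ and the proper map with contractible fibres $\epsilon'\co\ba\mathcal X'\ba\to\mathcal D^-_0$, which identifies $\BM\pu{\ba\mathcal X'\ba}\cong\BM\pu{\mathcal D^-_0}$. The key observation is a \emph{weight} (Hodge-theoretic) obstruction: by Proposition~\ref{nonrigid-summary} the Borel--Moore homology of $F'_{\num{nrig}}$ (which, after the fibration over $\B2{\Pp2}$ of Lemma~\ref{b2p2}, is obtained from the $\s_2$-equivariant HG polynomial there by tensoring with $\wp(\BM\pu{\B2{\Pp2}})$ in the appropriate twisted/untwisted pieces) is concentrated in homological degrees around $14$--$20$ and carries Tate Hodge structures of weights $-20,\dots,-14$ with the characteristic negative powers of $\Ll$; whereas by Proposition~\ref{rigid-summary} the class of $\BM\pu{F'_{\num{rig}}}$ equals $t^5(1+2t\Ll)\wp(\BM\pu{\GL(3)})$, and $\wp(\BM\pu{\GL(3)})=\Ll^9(1+t^{-1}\Ll^{-1})(1+t^{-3}\Ll^{-3})(1+t^{-5}\Ll^{-5})$ has only \emph{positive} powers of $\Ll$ in each graded piece. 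A differential $\delta_k\co\BM{k+1}{F'_{\num{rig}}}\to\BM k{F'_{\num{nrig}}}$ is a morphism of mixed Hodge structures, hence preserves weights; comparing the two lists one checks that no nonzero Tate piece on the source side matches a Tate piece of the same weight in the correct degree on the target side, forcing $\delta_k=0$. (One must be slightly careful: the $\s_2$-representation content matters, so the comparison is made $\s_2$-equivariantly, using that $F'_{\num{rig}}$ carries the trivial $\s_2$-action up to the twists recorded in Lemma~\ref{b2p2}.) I expect this weight/degree disjointness to be the main obstacle --- it requires writing out both HG polynomials explicitly and verifying term by term that the bidegrees $(k,\text{weight})$ appearing do not overlap, and in particular checking that the single ``bad'' term $\ant\, t^{20}\Ll^{-7}$ in Proposition~\ref{nonrigid-summary} (whose power of $\Ll$ is not the expected $\Ll^{-10}$) still cannot be hit.

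Granting part~\eqref{q0-uno}, the long exact sequence~\eqref{les-nrig} splits into short exact sequences
$$
0\to\BM{k}{F'_{\num{rig}}}\to\BM k{\mathcal D^-_0}\to\BM{k-1}{F'_{\num{nrig}}}\to 0,
$$
so $\wp(\BM\pu{\mathcal D^-_0})=\wp(\BM\pu{F'_{\num{rig}}})+t\,\wp(\BM\pu{F'_{\num{nrig}}})$ as a class in $K_0(\mathsf{HS_\Q})[t,t^{-1}]$, with the two summands carrying respectively the ``rigid'' and ``non-rigid'' contributions. Now $\imin_0=\mathcal V^-_0\setminus\mathcal D^-_0$ with $\mathcal V^-_0$ the total space of a rank $11$ vector bundle over $\Pp2$ (hence $\wp(\BM\pu{\mathcal V^-_0})=(1+t^2\Ll+t^4\Ll^2)t^{22}\Ll^{11}$), and the Gysin long exact sequence of the open--closed decomposition gives $\BM\pu{\imin_0}$ from $\BM\pu{\mathcal V^-_0}$ and $\BM\pu{\mathcal D^-_0}$; passing to cohomology via Poincaré--Lefschetz duality on the smooth $14$-dimensional $\imin_0$ converts Borel--Moore homology in degree $d$ and weight $-w$ into cohomology in degree $28-d$ and weight $28-w$. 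Tracking the ``non-rigid part'' of $\mathcal D^-_0$ (the summand $t\,\wp(\BM\pu{F'_{\num{nrig}}})$) through the Gysin sequence and this duality, one reads off its contribution to $\coh\pu{\imin_0}$; the terms $(3\inv+\ant)t^{20}\Ll^{-10}+(3\inv+3\ant)t^{19}\Ll^{-9}+(\inv+3\ant)t^{18}\Ll^{-8}+\ant t^{20}\Ll^{-7}$, once multiplied by $\wp(\BM\pu{\B2{\Pp2}})$ (in its twisted or untwisted form according to whether the $\s_2$-piece is $\ant$ or $\inv$, by the recipe recalled before Table~\ref{Vaslist}) and dualized, collapse --- after the relevant cancellations with $\mathcal V^-_0$ --- to exactly $1+t\Ll$. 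This is part~\eqref{q0-due}. For part~\eqref{q0-tre}, the ``rigid part'' $\wp(\BM\pu{F'_{\num{rig}}})=t^5(1+2t\Ll)\wp(\BM\pu{\GL(3)})$ is processed the same way: dividing out the $\coh\pu{\GL(3)}$ factor as in Lemma~\ref{division}/Theorem~\ref{LeHi} (equivalently, noting that $\wp(\BM\pu{\GL(3)})$ is exactly the HG polynomial one quotients by), the surviving contribution to $\coh\pu{\imin_0}$, and hence to $\coh\pu{\qmin_0}$, is $t^5\Ll^5+2t^6\Ll^6$. Assembling the two pieces gives $\wp(\coh\pu{\qmin_0})=1+t\Ll+t^5\Ll^5+2t^6\Ll^6$, which is Theorem~\ref{cohq0}.

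A remark on where care is needed besides part~\eqref{q0-uno}: one must make sure the $\s_2$-equivariant structure is carried consistently through every step, since the Leray spectral sequence of $\pmin_0$ (equivalently, the fibration $F'_k\to\B2{\Pp2}$) feeds $\s_2$-representations into configuration-space homology with $\pm\Q$ coefficients, and only the $\s_2$-invariants contribute after passing to the unordered picture; the dictionary is that an $\inv$-isotypic class pairs with $\BM\pu{\B2{\Pp2}}$ with constant coefficients and an $\ant$-isotypic class with the twisted coefficients $\pm\Q$, exactly as in Lemma~\ref{b2p2}. Checking that this dictionary, combined with the vanishing of all $\delta_k$, produces no unexpected surviving classes in degrees other than $0,1,5,6$ is the final verification, and it is routine given the explicit polynomials above.
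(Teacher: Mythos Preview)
Your weight argument for part~\eqref{q0-uno} is a genuine alternative to what the paper does, and it works: writing out $\wp(\BM\pu{F'_{\num{rig}}})=t^5(1+2t\Ll)\wp(\BM\pu{\GL(3)})$ explicitly, its terms sit in degrees $14$--$24$ with $\Ll$-powers between $-2$ and $-9$; the $E_2$ term for $F'_{\num{nrig}}\to\B2{\Pp2}$ (hence any subquotient, so in particular the actual $\BM\pu{F'_{\num{nrig}}}$) sits in degrees $19$--$28$ with $\Ll$-powers between $-8$ and $-14$, and one checks that in each degree the two sets of weights are disjoint. The paper instead deduces the vanishing of all $\delta_k$ \emph{indirectly}, from the constraint (Lemma~\ref{division}) that $\coh\pu{\imin_0}$ must be a tensor product with $\coh\pu{\GL(3)}$: it sets up both the Gysin sequence~\eqref{incld0} and the Leray spectral sequence of $\pmin_0$ (Table~\ref{first-leray}) and observes that the only pattern of differentials compatible with this divisibility is $\delta_k\equiv 0$ together with all $d_2$ of maximal rank. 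Your route to~\eqref{q0-uno} is cleaner.

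However, this is exactly where your argument for part~\eqref{q0-due} breaks down. Once $\delta_k=0$ you get short exact sequences $0\to\BM k{F'_{\num{nrig}}}\to\BM k{\mathcal D^-_0}\to\BM k{F'_{\num{rig}}}\to 0$ (note: no degree shift---your version is misstated), but to read off the non-rigid contribution to $\coh\pu{\imin_0}$ you still need (a) the actual $\BM\pu{F'_{\num{nrig}}}$, not just the $E_2$ tensor product, and (b) the differentials in the Gysin sequence~\eqref{incld0} coupling the non-rigid classes in degrees $26$--$28$ with $\BM\pu{\mathcal V^-_0}$. Your proposal simply asserts that these ``collapse\ldots\ to exactly $1+t\Ll$'', but weight considerations cannot decide either (a) or (b): the relevant classes are all pure Tate of matching weights. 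The paper's $\GL(3)$-divisibility argument is precisely what pins down the $d_2$'s in Table~\ref{first-leray} and hence part~\eqref{q0-due}; you have discarded the one tool that does this job and not replaced it with anything. Minor errors to fix as well: $\imin_0$ has complex dimension $15$, not $14$ (so the duality is $\coh k\leftrightarrow\BM{30-k}$); $\mathcal V^-_0$ is a rank-$11$ bundle over $\B2{\Pp2}$, with $\wp(\BM\pu{\mathcal V^-_0})=(t^4\Ll^{-2}+t^6\Ll^{-3}+t^8\Ll^{-4})t^{22}\Ll^{-11}$, not what you wrote.
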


\proof
Recall from Lemma~\ref{division} that the cohomology of $\imin_0$ is a tensor product of the cohomology of $\GL(3)$. There are two equivalent ways to compute the cohomology of $\imin_0$. One possibility is to compute the Borel--Moore homology of $\mathcal D^-_0$ by using the long exact sequence~\eqref{les-nrig} and successively calculate the Borel--Moore homology of $\imin_0$ by the long exact sequence
\begin{equation}\label{incld0}
\cdots\rightarrow
\BM {k}{\mathcal D^-_0}\rightarrow
\BM {k}{\mathcal V^-_0}\rightarrow
\BM {k}{\imin_0}\rightarrow
\BM {k-1}{\mathcal D^-_0}\rightarrow
\cdots
 \end{equation}

Since $\mathcal V^-_0$ is a complex vector bundle of rank $11$ over $\B2{\Pp2}$, its Borel--Moore homology is equal to $\BM{\pu-22}{\B2{\Pp2}}\otimes\Q(11)$. In particular, from Lemma~\ref{b2p2} it follows that $\BM {k}{\imin_0}\rightarrow\BM {k-1}{\mathcal D^-_0}$ is an isomorphism for $k\leq 25$. If we compare this with the Borel--Moore homology of $F'_{\num{rig}}$ as given in Proposition~\ref{nonrigid-summary}, we have that all Borel--Moore homology classes of $\mathcal D^-_0$ coming from $\BM\pu{F'_{\num{rig}}}$ via the long exact sequence~\eqref{les-nrig} are in this range. Finally, since $\imin_0$ is smooth and $15$-dimensional , its cohomology is related to the Borel--Moore homology by 
\begin{equation}\label{BMvscoh}
\coh\pu{\imin_0} = \BM{30-\pu}{\imin_0}\otimes\Q(-15).
\end{equation}
Applying this to $\BM\pu{F'_{\num{rig}}}$, one gets that its contribution to the cohomology of $\imin_0$ is as described in~\eqref{q0-tre}, provided $\delta_k$ is trivial for all $k\leq 25$.

Another way to compute the cohomology of $\imin_0$  is to use Vassiliev--Gorinov's method to compute the Borel--Moore homology of the discriminant $V_{p,q}\cap \Sigma$, then Alexander's duality~\eqref{alexander} to deduce from this the cohomology of its complement $V_{p,q}\setminus\Delta$, and finally compute the cohomology of $\imin_0$ using the Leray spectral sequence associated to $\pmin_0\co\imin_0\rightarrow\B2{\Pp2}$. If we follow this program for the contribution of rigid configurations, the $E_2$ term of the Leray spectral sequence in cohomology associated to $\pmin_0$ is as given in the first part of Table~\ref{first-leray}. 

The information given so far determines the contribution of rigid and non-rigid configurations, up to the $d_2$ differentials of the Leray spectral sequence associated with $\pmin_0$ and the computation of the kernel of the maps $\delta_k$ of \eqref{les-nrig}. At this point, it is important to keep in mind that the rational cohomology of $\imin_0$ has to be a tensor product of the cohomology of $\GL(3)$, whose HG polynomial is $(1-t\Ll)(1-t^3\Ll^2)(1-t^5\Ll^3)$. Then one discovers that the only possibility to obtain $\coh\pu{\imin_0}$ with a structure as tensor product of $\coh\pu{\GL(3)}$ is that all maps $\delta_k$ are $0$  and that all $d_2$ differentials in the Leray spectral sequence in Table~\ref{first-leray} have the maximal possible rank. The triviality of the maps $\delta_k$ yields part \eqref{q0-uno} of the claim. The result on the rank of the differentials of the Leray spectral sequence associated to $\pmin_0$ implies that the contribution of non-rigid configurations to the $E_3$ term of this spectral sequence is as given in the second part of Table~\ref{first-leray}. In particular, this yields that the contribution of non-rigid configurations to the cohomology of $\imin_0$ is as described in part~\eqref{q0-due} of the claim. 
\qed

\begin{table}
\caption{\label{first-leray}
$E_2$ and $E_3$ terms of the Leray spectral sequence of the fibration $\mathcal I\rightarrow\B2{\Pp2}$ contributed from non-rigid configurations}
$$\begin{array}{r|cccccccc}
q&&&&&&&\\[6pt]
4&0&0&\Q(-5)&0&\Q(-6)&0&\Q(-7)\\
3&\Q(-3)  &0&\Q(-4)^4&0&\Q(-5)^4&0&\Q(-6)^3\\
2&\Q(-2)^3&0&\Q(-3)^6&0&\Q(-4)^6&0&\Q(-5)^3\\
1&\Q(-1)^3&0&\Q(-2)^4&0&\Q(-3)^4&0&\Q(-4)  \\
0&\Q&0&\Q(-1)&0&\Q(-2)&0&0
\\\hline
&0&1&2&3&4&5&6&p\\
\end{array}
$$

$$\begin{array}{r|cccccccc}
q&&&&&&&\\[6pt]
4&0&0&0     &0&0     &0&\Q(-7)\\
3&  0     &0&\Q(-4)  &0&\Q(-5)  &0&\Q(-6)^2\\
2&\Q(-2)  &0&\Q(-3)^2&0&\Q(-4)^2&0&\Q(-5)  \\
1&\Q(-1)^2&0&\Q(-2)  &0&\Q(-3)  &0&0       \\
0&\Q&0&0     &0&0     &0&0
\\\hline
&0&1&2&3&4&5&6&p\\
\end{array}
$$
\end{table}

\begin{proof}[Proof of Theorem~\ref{cohq0}]
The previous Lemma implies that the cohomology of $\imin_0$ is the direct sum of the contribution of non-rigid and of rigid configurations, and that its HG polynomials is $(1+t\Ll+t^5\Ll^5+2t^6\Ll^6)\cdot\wp(\coh\pu{\GL(3)})$. Then the claim follows from the isomorphism \eqref{icLH0} in Lemma~\ref{division}.
\end{proof}

\section{Non-rigid configurations}\label{nonrigid}

In this section we deals with the configuration types between $1$ and $6$ in Vassiliev's list (Table~\ref{Vaslist}). We need to refine these configuration types to get the classification of singular configurations of elements in $V_{p,q}\cap\Sigma$. For this first group of configuration types, one gets the cases which we list in Table~\ref{list-nonrigid}. 
In that list, we maintain the reference to the corresponding types in Vassiliev's list (Table~\ref{Vaslist}) by indicating the refined strata by roman letters. Furthermore, when it is convenient to group refined strata together, we will denote them collectively by the letter $\num x$. 

In Table~\ref{list-nonrigid} we also describe the configuration spaces $X_{j\num k}$ corresponding to each refined configuration type $j\num k$ and the associated strata $\Phi_{j\num k}\subset\ba\Lambda\ba$ and $F_{j\num k}\subset\ba\mathcal X\ba$. 
From the this description one finds that configurations of types from $1\num a$ to $6\num x$ either are non-rigid, or give strata $F_{j\num k}$ and $\Phi_{j\num k}$ which have trivial Borel--Moore homology. 

\begin{table}
\caption{\label{list-nonrigid}
Configurations of type $1$--$6$ (non-rigid configurations) and the associated strata.}
{\small
\begin{tabular}{l@{\ }p{11cm}}
1a&The point $p$ or the point $q$.\\      
&Stratum: $F_{1\num a}$ is a $\C^{10}$-bundle over $\Phi_{1\num a}=\{p,q\}$.\\
1b&Any point in $t^*$.\\
&Stratum: $F_{1\num b}$ is a $\C^9$-bundle over $\Phi_{1\num b}\cong\C^*$.\\
1c&Any point in $\Pp2\setminus t$.\\
&Stratum: $F_{1\num c}$ is a $\C^8$-bundle over $\Phi_{1\num  c}\cong\C^2$.\\
2a&The pair $\{p,q\}$.\\
&Stratum: $F_{2\num a}$ is a $\C^9$-bundle over $\Phi_{2\num a}\cong\op\Delta_1$.\\
2b&Any other pair of points on $t$.\\
&Stratum: $F_{2\num b}$ is a $\C^8$-bundle over $\Phi_{2\num b}$, which is a non-orientable $\op\Delta_1$--bundle over $\B2{t}\setminus\left\{\{p,q\}\right\}$.\\
2c&One point in $\{p,q\}$ and any point outside $t$.\\
&Stratum: $F_{2\num c}$ is a $\C^7$-bundle over $\Phi_{2\num c}$, which is a non-orientable $\op\Delta_1$-bundle over the disjoint union of two copies of $\C^2$.\\ 
2d&A point on $t^*$ and any point outside $t$.\\
&Stratum: $F_{2\num d}$ is a $\C^6$-bundle over $\Phi_{2\num d}$, which is a non-orientable $\op\Delta_1$-bundle over $\C^*\times\C^2$.\\
2e&Any pair of points in $\Pp2\setminus t$.\\
&Stratum: $F_{2\num e}$ is a $\C^5$-bundle over $\Phi_{2\num e}$, which is a non-orientable $\op\Delta_1$-bundle over $\B2{\C^2}$. Therefore, the Borel--Moore homology of $\Phi_{2\num e}$ and $F_{2\num e}$ is trivial. 
\\ 
3x&Any three points on the same line $\ell$ in $\Pp2$.\\
&There are different strata to consider ($\ell=t$; $p\in\ell\neq t$ and $p$ is one of the singular points; $q\in\ell\neq t$ and $q$ is one of the singular points; the three points do not contain $p$ or $q$ but $\ell$ do; $\ell\neq\{p,q\}=\emptyset$). In view of Lemma~\ref{lem1}, the Borel--Moore homology of the configuration space $\B3{\ell}$ is trivial. Hence, all these strata contribute trivially.\\
4a&$p$, $q$ and a further point outside $t$.\\
&Stratum: $F_{4\num a}$ is a $\C^6$-bundle over $\Phi_{4\num a}$, which is a non-orientable $\op\Delta_2$-bundle over $\C^2$.\\
4b&Any other pair of points on $t$ and a point outside it.\\
&Stratum: $F_{4\num b}$ is a $\C^5$-bundle over $\Phi_{4\num b}$, which is a non-orientable $\op\Delta_2$--bundle over $\C^2\times(\B2{t}\setminus\left\{\{p,q\}\right\})$.\\
4x&Any other triple of non-collinear points in $\Pp2$.\\
& There are several cases to consider ($p$ or $q$ and two more points lying outside $t$; one point on $t$ and two points outside; three points outside $t$).
All of them contribute trivially in view of Lemma~\ref{lem1}.\ref{lem1i}.\\
5x&Any line $\ell$ in $\Pp2$.\\
&There are several cases to be considered, according to whether the line $\ell$ is $t$, it passes through $p$ of $q$, or it is in general position with respect to $p,q$. Observe that for every line $\ell$ in $\Pp2$, all subsets of $\ell$ of cardinality at most $3$ belong to configurations of type $1$--$4$. This allows us to apply Lemma~\cite[2.17]{OTM4} and conclude that the contribution of all strata of type $5\num x$ is trivial.
\\
6x&Any three points on the same line $\ell$ plus a point outside $\ell$.\\
&Several cases, all of them do not contribute. The proof is analogous to case $3\num x$.\\
\end{tabular}
}
\end{table}

Next, we compute the contribution of non-rigid configurations to the spectral sequence $\ee^r$ converging to $\BM\pu{\ba\Lambda\ba\setminus\Phi_{13}}$ associated with the stratification $\Phi_\pu$ indexed by the configuration types. Hence, the $\ee^1$ term of this spectral sequence is given by $\ee^1_{u,v}=\BM{u+v}{\Phi_{u}}$, where $u$ refers to the $u$th configuration type in our list. Rigid configurations contribute the first nine non-trivial columns, and specifically, to the configuration types $1\num a$, $1\num b$, $1\num c$, $2\num a$, $2\num b$, $2\num c$, $2\num d$, $4\num a$ and $4\num b$. For the sake of simplicity, we will omit from the spectral sequence all configuration types $j\num k$ such that the Borel--Moore homology of $\Phi_{j\num k}$ is trivial. 

Then one gets from the description of the strata given in Table~\ref{list-nonrigid} that $\ee^1_{u,v}$ for $1\leq u\leq 9$ is as in the first part of Table~\ref{first}.

\begin{table}
  \caption{\label{first} $E^1$ terms of the spectral sequences $\ee^r_{u,v}\Rightarrow\BM{u+v}{\ba\Lambda\ba\setminus\Phi_{13}}$ and $\Ee^r_{u,v}\Rightarrow \BM\pu{\ba\mathcal X\ba}=\BM\pu{V_{p,q}\cap\Sigma}$.}
\begin{center}
\begin{tabular}{c}
$\ee^1_{u,v}$ for $1\leq u\leq 9$.\\
{\scalebox{0.8}{
$\begin{array}{r|cccccccccc}
v&&&&&&&\\[6pt]
 1&0&0&\Inv(2)&0&0&0&0&0&0&\\
 0&0&\Inv(1)&0&0&0&0&\Inv(3)&0&0&\\
-1&\Inv+\Ant&\Ant&0&0&0&(\Inv+\Ant)(2)&\Ant(2)&0&\Inv(3)&\\
-2&0&0&0&0&\Inv(1)&0&0&\Ant(2)&\Ant(2)&\\
-3&0&0&0&\Ant&\Ant&0&0&0&0&\\
\hline
&1&2&3&4&5&6&7&8&9&u\\
\num{type}&\num{(1a)}&\num{(1b)}&\num{(1c)}&\num{(2a)}&\num{(2b)}&\num{(2c)}&\num{(2d)}&\num{(4a)}&\num{(4b)}&
\end{array}
$}}
\\[48pt]
$\Ee^1_{u,v}$ for $1\leq u\leq9$.\\
{\scalebox{0.8}{
$\begin{array}{r|cccccccccc}
v&&&&&&&\\[6pt]
19&(\Inv+\Ant)(10)&0&0&0&0&0&0&0&0\\
18&0&\Inv(10)&0&0&0&0&0&0&0\\
17&0&\Ant(9)&\Inv(10)&0&0&0&0&0&0\\
16&0&0&0&0&0&0&0&0&0\\
15&0&0&0&\Ant(9)&0&0&0&0&0\\
14&0&0&0&0&\Inv(9)&0&0&0&0\\
13&0&0&0&0&\Ant(8)&(\Inv+\Ant)(9)&0&0&0\\
12&0&0&0&0&0&0&\Inv(9)&0&0\\
11&0&0&0&0&0&0&\Ant(8)&0&0\\
10&0&0&0&0&0&0&0&\Ant(8)&0\\
 9&0&0&0&0&0&0&0&0&\Inv(8)\\
 8&0&0&0&0&0&0&0&0&\Ant(7)\\
\hline
&1&2&3&4&5&6&7&8&9&u\\
\num{type}&\num{(1a)}&\num{(1b)}&\num{(1c)}&\num{(2a)}&\num{(2b)}&\num{(2c)}&\num{(2d)}&\num{(4a)}&\num{(4b)}&
\end{array}
$
}}
\end{tabular}
\end{center}
\end{table}

\begin{lem}\label{first-Lambda}
The spectral sequence $\ee^r_{u,v}\Rightarrow\BM{u+v}{\ba\Lambda\ba\setminus\Phi_{13}}$ associated with the stratification $\Phi_\pu$ and converging to the Borel--Moore homology of $\ba\Lambda\ba\setminus\Phi_{13}$ satisfies 
$$\ee^{\infty}_{1,-1}=\Inv, \ 
\ee^{\infty}_{u,v}=0\text{ for }(u,v)\neq(1,-1)
$$
for $-3\leq v\leq1$, $1\leq u\leq 9$ (i.e. for all terms coming from non-rigid configurations).
\end{lem}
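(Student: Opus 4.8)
The plan is to analyze the spectral sequence $\ee^r_{u,v}$ whose $\ee^1$-page (restricted to columns $1\le u\le 9$) is displayed in the first part of Table~\ref{first}, and to chase the differentials $\ee^r\to\ee^{r+r}$ of bidegree $(-r,r-1)$ until the page stabilizes. First I would record the total-degree constraints: reading Table~\ref{first}, the nonzero entries in columns $1$--$9$ occur only in total degrees $u+v\in\{-2,-1,0,1,2,3\}$, and I want to show that on the $\ee^\infty$-page everything vanishes except $\ee^\infty_{1,-1}=\Inv$. The key structural input is the $\s_2$-equivariance: every differential is a map of $\s_2$-representations, so it must respect the decomposition into $\Inv$- and $\Ant$-isotypic parts; a differential between an entry that is purely $\Inv$ and one that is purely $\Ant$ is automatically zero. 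This immediately kills many a priori possible differentials and is what makes the computation tractable.

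Next I would go term by term. The entry $\ee^1_{1,-1}=\Inv+\Ant$: its $\Ant$-part must be cancelled, and the only available target/source in the right total degree and with an $\Ant$-component is $\ee^1_{4,-3}=\Ant$ via a $d_3$ differential (bidegree $(-3,2)$, total degree preserved $=0$... let me instead match total degrees: $d_r$ lowers $u+v$ by $1$). So I track, for each $\Ant$-summand, which differential removes it; the candidates are the $d_r$ connecting $(u,v)$ to $(u-r,v+r-1)$. Working through the table: $\ee^1_{4,-3}=\Ant$ and $\ee^1_{5,-3}=\Ant$ in total degree $1$ and $2$ must die, paired against $\ee^1_{5,-2}=\Inv(1)$? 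No—again $\s_2$-type forbids $\Ant\leftrightarrow\Inv$. So the $\Ant$-entries in row $v=-3$ must instead map to or receive from other $\Ant$-entries: $\ee^1_{1,-1}=\Inv+\Ant$, $\ee^1_{2,-1}=\Ant$, $\ee^1_{6,-1}=(\Inv+\Ant)(2)$, $\ee^1_{7,-1}=\Ant(2)$, $\ee^1_{8,-2}=\Ant(2)$, $\ee^1_{9,-2}=\Ant(2)$, $\ee^1_{3,1}=\Inv(2)$, etc. The strategy is: (i) the $\Inv$-part of column $1$ survives because there is no $\Inv$-entry in an adjacent total degree that could hit it or be hit by it except $\ee^1_{2,0}=\Inv(1)$ and $\ee^1_{3,1}=\Inv(2)$—so I must show the $d_1$ (or $d_2$) out of $\ee^1_{1,-1}$ into $\ee^1_{2,-1}$ and the maps among $\ee^1_{2,0},\ee^1_{3,1}$ are isomorphisms on the relevant pieces, leaving exactly $\Inv$ in position $(1,-1)$; (ii) all $\Ant$-pieces cancel in pairs along the unique differentials permitted by total degree and $\s_2$-type.

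The cleanest way to organize (ii) is to note that the geometry forces these cancellations: the maps $\Phi_{j\num k}$ are built from open-simplex bundles over configuration spaces, and the incidence relations among the refined strata (which closures contain which) dictate the $d_1$ differential, which is the Borel–Moore-homology boundary map of the stratification. I would either (a) compute these $d_1$ maps directly from the explicit stratum descriptions in Table~\ref{list-nonrigid} — e.g. $\Phi_{1\num a}$ sits in the closure of $\Phi_{2\num a}$, $\Phi_{1\num b}$ and $\Phi_{1\num c}$ in closures of $\Phi_{2\num b},\Phi_{2\num c},\Phi_{2\num d}$, etc. — or (b) argue by exclusion: since the abutment $\BM\pu{\ba\Lambda\ba\setminus\Phi_{13}}$ is known (or will be computed) and the rigid-configuration columns $u\ge 10$ contribute a prescribed amount, whatever is left over from columns $1$--$9$ must be exactly $\Inv$ in bidegree $(1,-1)$, which pins down all the differentials. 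I expect approach (b) to be the intended one and the faster route, with the $\s_2$-type bookkeeping doing most of the work. The main obstacle is verifying that no differential can map between columns $\le 9$ and columns $\ge 10$ in a way that would change the count — i.e. that the non-rigid block is "closed off" from the rigid block on the $\ee^\infty$-page; this follows from the fact that rigid strata have closures meeting the non-rigid strata only in lower-dimensional (trivial-homology) pieces, but it needs to be stated carefully, and it is the one place where I would slow down and check the stratum closures rather than wave hands.
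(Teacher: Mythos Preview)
Your proposal misses the key idea that makes the paper's proof a one-liner. The point is that although the list of configuration types $1\num a$--$4\num b$ is a \emph{refinement} (according to the position of the singular points relative to $p$, $q$ and $t$) of Vassiliev's types $1$--$4$, the union $\bigcup_{\num k}\Phi_{j\num k}$ inside $\ba\Lambda\ba$ coincides, for each fixed $j\in\{1,2,3,4\}$, with the stratum $\Phi_j$ of the \emph{unmarked} Vassiliev construction. Hence the contribution of columns $1$--$9$ to $\BM\pu{\ba\Lambda\ba\setminus\Phi_{13}}$ is literally the same as the contribution of types $1$--$4$ in Vassiliev's original computation for plane quartics \cite[Thm~3]{Vart}, where it is already proved that these types contribute only the class $\Q$ in degree~$0$. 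That is the entire argument; no differential needs to be identified.

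By contrast, your route~(b) is circular: the abutment $\BM\pu{\ba\Lambda\ba\setminus\Phi_{13}}$ is not known in advance --- this lemma is an input to Remark~\ref{diff-base} and Lemma~\ref{hom13}, which are themselves used to determine the Borel--Moore homology of the discriminant. Your route~(a) could in principle succeed, but it is far more work than you indicate: the $\s_2$-type bookkeeping only rules out certain differentials, it does not show the remaining ones have maximal rank. For example, to kill $\Inv(1)$ at $(2,0)$ you would need a nonzero $d_3\colon\ee^3_{5,-2}\to\ee^3_{2,0}$, and to kill $\Inv(2)$ at $(3,1)$ a nonzero $d_3\colon\ee^3_{6,-1}\to\ee^3_{3,1}$; neither follows from equivariance alone, and you have not supplied the geometric argument. (Your bookkeeping also slips in places --- e.g.\ the differential $d_r$ has bidegree $(-r,r-1)$, so $\ee^1_{1,-1}$ receives from $\ee^1_{2,-1}$, not the other way round.) The clean fix is to recognize the reduction to the unmarked case.
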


\proof
This is based on the fact that for every $j=1,\dots,4$, the union of the spaces $\Phi_{j\num{k}}$ in $\ba\Lambda\ba$ coincides with the contribution of configurations of type $1$--$4$ to the auxiliary Vassiliev spectral sequence in the case of unmarked quartic curves treated in \cite[Thm~3]{Vart}. Then the claim follows from Vassiliev's proof that configurations of type $1$--$4$ contribute only trivially to the Borel--Moore homology of the open stratum of the spectral sequence converging to the Borel--Moore homology of the discriminant of unmarked plane curves.
\qed

\begin{rem}\label{diff-base}
In view of Lemma~\ref{ucci}, the stratum $F_{13}$ corresponding to the configuration $\Pp2$ is an open cone over $\ba\Lambda\ba\setminus\Phi_{13}$. 
Then Lemma~\ref{first-Lambda} proves that the only contribution of configurations of type $1$--$4$ to the Borel--Moore homology of $\ba\Lambda\ba\setminus\Phi_{13}$ is to $\BM0{\ba\Lambda\ba\setminus\Phi_{13}}=0$. If we decompose the open cone $F_{13}$ over $\ba\Lambda\ba\setminus\Phi_{13}$ as the union of its vertex and a $\op\Delta_1$-bundle over $\ba\Lambda\ba\setminus\Phi_{13}$, we see that the Borel--Moore homology group $\BM1{\op\Delta_1}\otimes\BM0{\ba\Lambda\ba\setminus\Phi_{13}}$ is killed by the Borel--Moore homology of the vertex. In other words, this implies that configurations of type $1$--$4$ contribute trivially to the Borel--Moore homology of the stratum $F_{13}\subset\ba\mathcal X\ba$.
\end{rem}

We compute the contribution of non-rigid configurations to the spectral sequence $\Ee^r_{u,v}\Rightarrow\BM{u+v}{\ba\mathcal X\ba}\cong\BM{u+v}{V_{p,q}\cap\Sigma}$ associated with the stratification $F_\pu$. Again, this will give the first $9$ columns of the spectral sequence. If we restrict our considerations to these first 9 columns, we obtain a spectral sequence converging to the Borel--Moore homology of the space $F_{\num{nrig}}:=\bigcup F_{j\num k}$ where the union is over all configurations $j\num k$ between $1\num a$ and $6\num x$. 

\begin{lem}
\item
The $\Ee^1$ terms  of the spectral sequence $\Ee^r_{u,v}\Rightarrow \BM{u+v}{V_{p,q}\cap\Sigma}$ associated with the stratification $F_\pu$ for $1\leq u\leq 9$ are as given in the second part of Table~\ref{first}. 
\end{lem}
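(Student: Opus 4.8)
The plan is to read off each $\Ee^1_{u,v}$ term directly from the description of the strata $F_{j\num k}$ given in Table~\ref{list-nonrigid}, using the two structural facts recalled in the text: $F_{j\num k}$ is the total space of a complex vector bundle over $\Phi_{j\num k}$, and $\Phi_{j\num k}$ is a non-orientable bundle in open simplices over the configuration space $X_{j\num k}$. Concretely, if $F_{j\num k}$ is a $\C^d$-bundle over a $\Phi_{j\num k}$ which is an $\op\Delta_m$-bundle over $X_{j\num k}$, then $\BM{\pu}{F_{j\num k}}$ equals $\BM[\pm\Q]{\pu}{X_{j\num k}}$ shifted by $2d+m$ and Tate-twisted by $\Q(d)$ (the simplex fibre contributes only in its top dimension, with the sign local system accounting for non-orientability, and the vector bundle contributes a Thom isomorphism). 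Since the column index $u$ in the spectral sequence is the configuration type, I will go type by type through $1\num a, 1\num b, 1\num c, 2\num a, 2\num b, 2\num c, 2\num d, 4\num a, 4\num b$ — exactly the nine columns where $\BM{\pu}{\Phi_{j\num k}}$ is non-trivial, the others having been shown in Table~\ref{list-nonrigid} to vanish — and compute the Borel--Moore homology of each $X_{j\num k}$, twisted by $\pm\Q$ when the $\s_2$-action (or interchange of points) is present.

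First I would collect the needed configuration-space computations: $\BM{\pu}{\mathrm{pt}} = \Q$; $\BM{\pu}{\C^*}$; $\BM{\pu}{\C^2} = \Q(2)$ in degree $4$; the twisted and untwisted Borel--Moore homology of $\B2{t}\setminus\{\{p,q\}\}$ (a two-point configuration space on $\C\setminus\{\text{one point}\}\cong\C^*$, or equivalently on a once-punctured affine line), and of the product spaces $\C^2\times(\text{those})$; for type $2\num a$ the base $\Phi_{2\num a}$ is an $\op\Delta_1$-bundle over a point, so $\BM{\pu}{\Phi_{2\num a}} = \BM{\pu}{\op\Delta_1}$; similarly for $4\num a$ where the base of the simplex bundle is $\C^2$. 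The $\s_2$-equivariant structure then records whether each surviving class sits in the trivial representation $\Inv = \Schur2$ or the sign representation $\Ant = \Schur{1,1}$: interchanging $p$ and $q$ acts trivially on $1\num c$ and on $2\num e$-type loci but with a sign on, e.g., the simplex direction for $2\num a$, and the entries of Table~\ref{first} are exactly the outcome of bookkeeping these signs together with the twisted local system $\pm\Q$ on the two-point configuration factors.

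The main obstacle, and the only genuinely delicate point, is getting the $\s_2$-representation and the twist bookkeeping right simultaneously: for the strata involving an ordered simplex $\op\Delta_m$ whose vertices are labelled by the singular points, and whose base configuration space also carries a permutation action, one must be careful that the sign contributed by the simplex orientation and the sign contributed by the local system $\pm\Q$ on $X_{j\num k}$ are combined with the correct relative orientation — an inconsistent convention would flip $\Inv$ and $\Ant$ in several entries. I would fix this once and for all by orienting each simplex $\op\Delta_m$ compatibly with a chosen ordering of the points, so that the full contribution of $F_{j\num k}$ is $\BM[\pm\Q]{\pu - 2d - m}{X_{j\num k}} \otimes \Q(d)$ with a single occurrence of the sign local system, and then verify the low-degree entries ($1\num a$, $1\num b$, $1\num c$) against Vassiliev's corresponding computation for unmarked quartics (Table~\ref{Vaslist}, rows $1$ and $2$) as a consistency check. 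Once the conventions are pinned down, each of the nine columns is a short and mechanical computation, and assembling them gives precisely the second part of Table~\ref{first}.
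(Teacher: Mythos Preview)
Your approach is correct and will produce the table, but it is considerably more laborious than the paper's one-line argument. The paper observes that, by construction, $\Ee^1_{u,v}=\BM{u+v}{F_u}$ and that $F_u$ is a $\C^{k_u}$-bundle over $\Phi_u$; since the first part of Table~\ref{first} already records $\ee^1_{u,v}=\BM{u+v}{\Phi_u}$, the Thom isomorphism gives $\Ee^1_{u,v}=\ee^1_{u,v-2k_u}\otimes\Q(k_u)$, and the second table is obtained from the first by shifting each column by $2k_u$ in $v$ and twisting by $\Q(k_u)$, with $k_u$ read off from Table~\ref{list-nonrigid}. You are effectively recomputing the first table from scratch (going all the way back to the twisted Borel--Moore homology of each $X_{j\num k}$ and the simplex bundle) and then applying the same vector-bundle shift. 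Nothing is wrong with this, and your sign/orientation bookkeeping is the right thing to worry about, but once the $\ee^1$ table is in hand there is nothing left to compute beyond the shift.
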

\proof
In this spectral sequence, the $\Ee^1$ term is given by $\Ee^1_{u,v}=\BM{u+v}{F_{u}}$, where $u$ refers to the $u$th configuration type in our list. Since $F_u$ is a vector bundle of a certain rank $k_u$ over $\Phi_{u}$, one has $\Ee^1_{u,v}=\ee^1_{u,v-2k_u}\otimes\Q(k_u)$. 
\qed

\begin{proof}[Proof of Proposition~\ref{nonrigid-summary}]
Since $F_{\num{nrig}}$ is the union of the strata $F_{j\num k}$ with $j\leq 6$, its Borel--Moore homology can be computed by a spectral sequence whose $E^1$ term coincides with $\Ee^1_{u,v}$ if $u\leq 9$ and is $0$ if $u\geq 10$. Hence, the $E^1$ term coincides with the $\Ee^1_{u,v}$ in the second part of Table~\ref{first}. 

We observe that for $1\leq u\leq9$ the Hodge structure in $\Ee^1_{u,v}$ is pure of weight $-2(u+v-10)$. This implies that for every $u,r$ such that $1\leq u<u+r\leq 9$, the Hodge weight of $\Ee^r_{u,v}$ and $\Ee^r_{u+r,v-r+1}$ are different, hence all $d_r$ differentials vanish in this range. From this the claim follows.
\end{proof}

\section{Rigid configurations}\label{rigid}

In this section, we refine the second part of List~\ref{Vaslist} (i.e. configuration types from $7$ to $13$) to complete the list of singular configurations we need to apply Vassiliev--Gorinov's method to $V_{p,q}\cap\Sigma$ and the incidence correspondence $\mathcal D^-_0$. As we have briefly explained in Section~\ref{introQ0}, the configuration spaces associated with the refinements of configuration types $7$--$12$ give a non-trivial contribution unless they correspond to rigid configurations, i.e. finite configurations $(\{p,q\},\{s_1,\dots,s_r\})\subset \Pp2\times\Pp2$ with finite stabilizer in $\PGL(3)$. For such configurations, the computation of the Borel--Moore homology is easier for the ``fibred'' configuration space $X'_j$ than for the configuration space $X_j$ where we assume the bitangent to be fixed. 

The refinement of configuration type $7$ (four points in general position) gives the four configuration types described in Table~\ref{rigid-7}. For all of these configurations $7\num k$, $\num k\in\{\num a,\num b,\num c,\num d\}$ we can prove that the twisted Borel--Moore homology of the associated configuration space $X'_{7\num k}$ vanishes (see in particular Lemma~\ref{hom7c} and~\ref{X'_7d}). Here we abuse notation and we define the twisted local system $\pm\Q$ for a fibred configuration space $S\subset\B 2{\Pp2}\times\B k{\Pp2}$ as the restriction to $S$ of the pull-back of the twisted local system $\pm\Q$ under the projection $\B 2{\Pp2}\times\B k{\Pp2}\rightarrow\B k{\Pp2}$.

\begin{table}
\caption{Rigid configurations of type $7$\label{rigid-7}}
{\small
\begin{tabular}{l@{\ }p{11cm}}
7a&Any quadruple of points containing $p$ and $q$. No three points in the configuration are allowed to be collinear.\\
&Stratum: $F_{7\num a}$ is a $\C^3$-bundle over $\Phi_{7\num a}$, which in turn is a non-orientable $\op\Delta_3$-bundle over the configuration space $X_{7\num a}$. 
The space $X_{7\num a}$ is isomorphic to 
$$
\left\{\{a,b\}\in\B2{\Pp2\setminus t}|\; {ab}\cap\{p,q\}= \emptyset\right\}.
$$
The complement $\Pp2\setminus t$ is isomorphic to $\C^2$, hence the twisted Borel--Moore homology of $\B2{\Pp2\setminus t}$ vanishes by Lemma~\ref{lem1}. 
Analogously, also the twisted Borel--Moore homology of $\left\{\{a,b\}\in\B2{\Pp2\setminus t}|\; {ab}\cap\{p,q\}\neq \emptyset\right\}$ vanishes, since it is a $\B2\C$-bundle over $\C\sqcup\C$. Hence the twisted Borel--Moore homology of $X_{7\num a}$ is trivial.\\
7b&Any quadruple of points of which exactly two lie on $t$. No three points are allowed to be collinear and $\{p,q\}$ cannot be contained in the configuration.\\
&Stratum: $F_{7\num b}$ is a $\C^2$-bundle over $\Phi_{7\num b}$, which has trivial Borel--Moore homology (the proof is analogous to that for case $7\num a$).\\
7c&Any quadruple $\{a,b,c,d\}$ of points lying outside $t$, such that no three points in the configuration lie on the same line, and $t$ is a common bitangent to two distinct quadrics in the pencil passing through $\{a,b,c,d\}$.\\\label{7c}
&Stratum: $F'_{7\num c}$ is a $\C$-bundle over the space $\Phi'_{7\num c}$, which is a non-orientable $\op\Delta_3$-bundle over the configuration space $X'_{7\num c}\subset \B2{\Pp2}\times\B4{\Pp2}$. In Lemma~\ref{hom7c} we will prove that the twisted Borel--Moore homology of $X'_{7\num c}$ vanishes.\\
7d&Quadruples $\{a,b,c,d\}\not\supset\{p,q\}$ of points in general linear position  such that there is a conic $C\not\supset t$ passing through $p,q,a,b,c,d$. The conic $C$ is allowed to be singular.\\\label{7d}
&The stratum $F'_{7\num d}$ is a $\C$-bundle over the space $\Phi'_{7\num d}$, which is a non-orientable $\op\Delta_3$-bundle over the configuration space $X'_{7\num d}$ studied in Section~\ref{fourptsonconic}. In Lemma~\ref{X'_7d} we will prove that the twisted Borel--Moore homology of $X'_{7\num d}$ vanishes. 
\end{tabular}
}
\end{table}

Configuration type $8$ corresponds to the union of a line $\ell$ in $\Pp2$ and a point $s$ outside it. This type gives rise to several refined configuration types. For instance, one has to distinguish if $\ell$ coincides with the bitangent $t$, if it passes through one of the bitangency points $p,q$ or through none of them. Also the point $s$ may lie on $t$, coincide with either $p$ or $q$ or simply lie on $t$. For every refined substratum $8\num k$ one has that the Borel--Moore homology of the space $\Phi_{8\num k}$ vanishes, and hence the same holds for $F_{8\num k}$. This follows from Lemma~\cite[2.17]{OTM4} and following remarks). To apply that lemma, we have to check that for every $\ell\cup\{s\}\in X_{8\num k}$ the space $\B4\ell\times\{s\}$ was contained in one of the preceding configurations $1\num a$--$7\num d$. Moreover, one has to check that for a fixed $\ell\cup\{s\}\in X_{8\num k}$ the vector subspaces $L(K)=\{f\in\Sigma\cap V_{p,q}| K_f\supset K\}$ for every $K\in\B4\ell\times\{s\}$ defines a vector bundle. This ensures the vanishing of the Borel--Moore homology of $\Phi_{8\num k}$ and $F_{8\num k}$ for all $\num k$.

The refined singular configurations of type $9$ and $10$ are described in Table~\ref{rigid-9&10}. We will calculate the contribution of these configuration types in Sections~\ref{fivepts} and~\ref{sixpts}. Configurations of type $11$ (non-singular conics in $\Pp2$ passing through $p$ and $q$) and of type $12$ (the union of two lines containing $p$ and $q$) correspond to strata $\Phi_{11}$ and $\Phi_{12}$ with trivial Borel--Moore homology. In both cases the singular configurations are (possibly reducible) rational curves. The vanishing of Borel--Moore homology follows from Lemma~\cite[2.17]{OTM4} in the case of $\Phi_{11}$ and from  Lemma~\cite[2.17]{OTM4} in the case of $\Phi_{12}$.

\begin{table}
\caption{Rigid configurations of type $9$ and $10$.\label{rigid-9&10}}
{\small
\begin{tabular}{l@{\ }p{11cm}}
9a& Configurations of five points $\{a,b,c,d,e\}$ such that $a,b\in t$ and $t\cap cd=\{e\}$.\\
 &Stratum: $F_{9\num a}$ is a $\C^2$-bundle over $\Phi_{9\num a}$, which has trivial Borel--Moore homology.
This follows from the fact that the map $X_{9\num a}\rightarrow \B2{\C^2}$ mapping $\{a,b,c,d,e\}$ as above to $\{c,d\}$ is a locally trivial fibration with fibre isomorphic to $\B2{\C}$. Since $\BM[\pm\Q]\pu{\B2\C}$ vanishes, the twisted Borel--Moore homology of $X_{9\num a}$ must vanish as well.
\\
9b&Configurations of five points $\{p,q,a,b,c\}$ with $pa\cap qb=\{c\}$.\label{9b}\\
&Stratum: $F_{9\num b}$ is a $\C^2$-bundle over $\Phi_{9\num b}$, which is a $\op\Delta_4$-bundle over the space $X_{9\num b}$ studied in Section~\ref{fivepts}.\\
9c&Configurations of five points $\{a,b,c,d,e\}$ such that $ab\cap cd=\{e\}$, $p\in ab\setminus\{e\}$, $q\in cd\setminus\{e\}$  and $\{p,q\}\not\subset\{a,b,c,d\}$.\\
&Stratum: $F_{9\num c}$ is a $\C$-bundle over $\Phi_{9\num c}$, which is a non-orientable $\op\Delta_4$-bundle over the space $X_{9\num c}$. In Section~\ref{fivepts} we will prove that the twisted Borel--Moore homology of the fibres configuration space $X'_{9\num c}$ is trivial. Note that for general configurations $K\in X_{9\num c}$, the only quartic curve which is singular in $K$ with $t=pq$ as bitangent is the degenerate conic $ab\cup cd$ with multiplicity $2$.
\\
9d&Configurations of five points $\{a,b,c,d,e\}$ with $e\in\{p,q\}$, $a,b,c,d\notin t$ such that $ab\cap cd=\{e\}$ and $t$ is tangent to the conic passing through $a,b,c,d,q$ for $e=p$, and to the conic through $a,b,c,d,p$ for $e=q$.\\
&Stratum: $F_{9\num d}$ is a $\C$-bundle over $\Phi_{9\num d}$, which is a non-orientable $\op\Delta_4$-bundle over the space $X_{9\num d}$ studied in Section~\ref{fivepts}.\\
9e&Configurations of five points $\{a,b,c,d,e\}$ with $a,b\in t$, $\{a,b\}\neq\{p,q\}$ such that $ac\cap bd=\{e\}$.\\
&Stratum: $F_{9\num e}$ is a $\C$-bundle over $\Phi_{9\num e}$, which is a non-orientable $\op\Delta_4$-bundle over the space $X_{9\num e}$ studied in Section~\ref{fivepts}.\\
10a/b&Six points which are the pairwise intersection of four lines in general position.\\
&Stratum: $F'_{10}$ is a $\C$-bundle over $\Phi'_{10}$, which is a non-orientable $\op\Delta_5$-bundle over the space $X'_{10}$. The Borel--Moore homology of the stratum $F'_{10}$ will be computed in Section~\ref{sixpts}.\\
\end{tabular}
}
\end{table}

The only remaining stratum is the stratum $F'_{13}\subset\ba\mathcal X'\ba$ corresponding to the configuration $\Pp2$. As explained in Proposition~\ref{ucci}, the stratum $F'_{13}$ is a topological open cone with vertex a point over the space $\ba\Lambda'\ba\setminus\Phi'_{13}$, which is the union of all $\Phi'_{j\num x}$ with $j\leq12$.

\begin{lem}\label{hom13}
The HG polynomial of the  Borel--Moore homology of $F'_{13}$ equals
$$t^6 \wp(\BM\pu{\PGL(3)}).$$
\end{lem}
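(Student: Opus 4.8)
The plan is to exploit the cone structure of $F'_{13}$ stated in Proposition~\ref{ucci}: $F'_{13}$ is a topological open cone, with vertex a single point, over the space $\ba\Lambda'\ba\setminus\Phi'_{13}=\bigcup_{j\leq 12}\Phi'_{j\num x}$. Decomposing this open cone as the disjoint union of its vertex and an $\op\Delta_1$-bundle over $\ba\Lambda'\ba\setminus\Phi'_{13}$ gives a long exact sequence in Borel--Moore homology relating $\BM\pu{F'_{13}}$, $\BM\pu{\text{pt}}$, and $\BM{\pu-1}{\ba\Lambda'\ba\setminus\Phi'_{13}}\otimes\Q(-1)$ (the shift by $1$ and the Tate twist account for the fibrewise open interval). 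So it suffices to compute $\BM\pu{\ba\Lambda'\ba\setminus\Phi'_{13}}$ and then track the single extra class contributed by the vertex.

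Next I would compute $\BM\pu{\ba\Lambda'\ba\setminus\Phi'_{13}}$ using the spectral sequence associated to the stratification $\Phi'_\pu$ indexed by configuration types $1$--$12$. The non-rigid part (types $1$--$6$) contributes only to degree zero and is entirely killed, exactly as in Lemma~\ref{first-Lambda} and Remark~\ref{diff-base}: the fibred strata $\Phi'_{j\num k}\to\B2{\Pp2}$ have fibre $\Phi_{j\num k}$, so Vassiliev's vanishing for configurations of type $1$--$4$ on unmarked quartics, combined with the triviality of the strata of type $3\num x$, $5\num x$, $6\num x$, forces $\ee^\infty_{u,v}=0$ for all these columns except a single copy of $\Inv$ in $(u,v)=(1,-1)$ — and as in Remark~\ref{diff-base} that class is itself killed once we pass to the cone $F'_{13}$. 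The rigid part (types $7$--$12$) contributes through the strata $\Phi'_{7\num c}$, $\Phi'_{7\num d}$, $\Phi'_{9\num b}$, $\Phi'_{9\num c}$, $\Phi'_{9\num d}$, $\Phi'_{9\num e}$ and $\Phi'_{10}$ (all other refinements of $7$--$12$, in particular all of type $8$, $11$, $12$, being acyclic by Lemma~\cite[2.17]{OTM4}), and the remaining strata $7\num c$, $7\num d$, $9\num c$, $9\num d$, $9\num e$ have vanishing twisted Borel--Moore homology on the fibres by Lemmas~\ref{hom7c}, \ref{X'_7d} and the results of Section~\ref{fivepts}. Hence the only genuinely contributing strata are $\Phi'_{9\num b}$ and $\Phi'_{10}$, whose Borel--Moore homology is computed in Sections~\ref{fivepts} and~\ref{sixpts}; by rigidity (Proposition~\ref{rigid-summary} and the discussion in Section~\ref{rigid}) each of these is a tensor product of $\BM\pu{\PGL(3)}$ with a shift.

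Finally I would assemble the pieces: feeding the Borel--Moore homology of the relevant $\Phi'_j$ into the spectral sequence, checking by weight (Hodge-theoretic) reasons that no further differentials act — the $\Phi'_j$ have pure Hodge structures and the weights in consecutive columns are incompatible, just as in the proof of Proposition~\ref{nonrigid-summary} — one obtains $\BM\pu{\ba\Lambda'\ba\setminus\Phi'_{13}}$, and then the cone exact sequence produces the extra Tate twist and degree shift that upgrades this to $\BM\pu{F'_{13}}$. Comparing the outcome with the claimed polynomial $t^6\,\wp(\BM\pu{\PGL(3)})$ finishes the proof. The main obstacle is the bookkeeping of the spectral sequence for $\ba\Lambda'\ba\setminus\Phi'_{13}$ together with the verification that all differentials vanish; the weight argument handles the non-rigid columns cleanly, but one must be careful that the rigid strata $\Phi'_{9\num b}$ and $\Phi'_{10}$ do not interact with each other or with the open cone's vertex through a nonzero differential, which again I expect to follow from purity of the Hodge structures and an explicit comparison of weights.
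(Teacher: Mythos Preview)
Your overall architecture is the same as the paper's---use the cone structure of $F'_{13}$ and compute $\BM\pu{\ba\Lambda'\ba\setminus\Phi'_{13}}$ via the stratification $\Phi'_\pu$---but the execution has two genuine gaps.

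First, the bookkeeping of which type-$9$ strata survive is wrong. You assert that $\Phi'_{9\num c}$, $\Phi'_{9\num d}$, $\Phi'_{9\num e}$ all have vanishing twisted Borel--Moore homology and that $\Phi'_{9\num b}$ is the survivor. In fact Lemmas~\ref{hom9b}--\ref{hom9e} show that $\Phi'_{9\num b}$, $\Phi'_{9\num c}$ and $\Phi'_{9\num e}$ are \emph{all} individually non-trivial (only $\Phi'_{9\num d}$ vanishes). What actually happens is that Lemma~\ref{union9} gives the vanishing of $\BM\pu{\Phi'_{9\num b}\cup\Phi'_{9\num c}}$ as a \emph{union}, so after this cancellation the only surviving type-$9$ stratum is $\Phi'_{9\num e}$, not $\Phi'_{9\num b}$. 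The two remaining columns are therefore $\Phi'_{9\num e}$ and $\Phi'_{10}$.

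Second, and more seriously, your expectation that ``no further differentials act'' by weight reasons is precisely the opposite of what occurs. From Lemmas~\ref{hom9e} and~\ref{hom10} the top classes are $\BM{25}{\Phi'_{10}}\cong\Q(10)$ and $\BM{24}{\Phi'_{9\num e}}\cong\Q(10)$: source and target of the connecting map $\delta_{25}$ have the \emph{same} Hodge weight, so purity tells you nothing. The paper shows $\delta_{25}$ is in fact an isomorphism, and does so by contradiction: if $\delta_{25}=0$ then $\BM{26}{F'_{13}}$ would contain $\Q(10)$, this class would survive to $\BM{26}{\mathcal D^-_0}$, and by \eqref{incld0}--\eqref{BMvscoh} one would obtain a $\Q(-5)$ in $\coh3{\imin_0}$, which is impossible since Hodge weights in $H^k$ are bounded above by $2k$. (The paper also remarks that a direct geometric argument is possible.) Once $\delta_{25}\neq 0$ is established, the $\PGL(3)$-tensor structure forces all other $\delta_k$ to be determined by it, and one obtains $\BM\pu{\ba\Lambda'\ba\setminus\Phi'_{13}}\cong\BM{\pu-5}{\PGL(3)}$; the cone then gives the claimed $t^6\wp(\BM\pu{\PGL(3)})$. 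Your weight argument would leave $\delta_{25}$ undetermined and, if you concluded it vanishes, would produce the wrong answer.
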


\proof
Let us denote by $\mathrm B:= \ba\Lambda'\ba\setminus\Phi'_{13}$ the base of the open cone $F'_{13}$. We intend to compute its Borel--Moore homology by using the spectral sequence associated to the stratification $\Phi_\pu$. 
We have already proved that the Borel--Moore homology of $\Phi'_{j\num{x}}$ with $5\leq j\leq 8$ or $11\leq j\leq 12$ is trivial, either in a straightforward way because these configurations contain too many points on the same rational curve, or in the Lemmas~\ref{hom7c} and  \ref{X'_7d}.
The union $\bigcup_{\num k\in\{\num a,\num b,\num c,\num d\}}\Phi'_{9\num k}$ has trivial Borel--Moore homology in view of Lemma~\ref{union9}.
Furthermore, as we explained in Remark~\ref{diff-base}, configurations of type $1$--$4$ contribute trivially to the Borel--Moore homology of $F'_{13}$. 
From this it follows that the only strata contributing to the Borel--Moore homology of the basis  of the open cone $F'_{13}$ are $\Phi'_{9\num e}$ and $\Phi'_{10}$. Therefore, there is a long exact sequence 
$$
\cdots \rightarrow
\BM {k}{\Phi'_{9\num e}} \rightarrow
 \BM k{\mathrm B} \rightarrow
\BM k{\Phi'_{10}} \xrightarrow{\delta_k}
\BM {k-1}{\Phi'_{9\num e}} \rightarrow \cdots
$$
in Borel--Moore homology. In Lemma~\ref{hom9e} and Lemma~\ref{hom10} we prove that both the Borel--Moore homology of $\Phi'_{9\num e}$ and of $\Phi'_{10}$ are tensor products of the Borel--Moore homology of $\PGL(3)$. This is a consequence of the fact that these configurations are rigid.
Since $\PGL(3)$ acts equivariantly on the whole of $\mathcal D^-_0$ and $\ba\Lambda'\ba$, the differentials $\delta_k$ have to respect this structure as tensor products of $\BM\pu{\PGL(3)}$. In particular, in our specific case this implies that all $\delta_k$ are induced from the differential $\delta_{25}$ between the non-trivial Borel--Moore homology classes of $\Phi_{10}$ and $\Phi_{9\num e}$ in top degree.
Furthermore, the claim is equivalent to showing that the differential $\delta_{25}$ is an isomorphism.

Assume by contradiction that $\delta_{25}$ were the $0$ map. Then we would have $\BM{25}{\mathrm B}=\Q(10)$ and thus $\BM{26}{F'_{13}}=\Q(10)$ for the open cone over $\mathrm B$. By briefly comparing this with the Borel--Moore homology of the strata $F'_{j\num x}$ with $j\leq 12$, we find that the contribution of $\BM{26}{F'_{13}}$ to the spectral sequence $\Ef^r_{p,q}\Rightarrow \BM\pu{\mathcal D^-_0}$ cannot be killed by any differential of that spectral sequence. In particular, this means that $\BM{26}{\mathcal D^-_0}$ is an extension of $\Q(10)$. By duality (see~\eqref{incld0} and~\eqref{BMvscoh}), this would imply that $\coh3{\imin_0}$ is an extension of $\Q(-5)$, which is clearly impossible since the Hodge weight of $\Q(-5)$ is $10>2\cdot 3$, whereas Hodge weights in cohomology can never be larger than twice the degree. 

From this it follows that $\delta_{25}$ must have rank $1$ and $\BM\pu{\mathrm B}=\BM{\pu-5}{\PGL(3)}$. Then the claim follows from the structure of $F'_{13}$ as an open cone over $\mathrm B$.  
\qed

\begin{rem}
One can also give a direct proof of the non-vanishing of $\delta_{25}$ based on geometric considerations on the configuration spaces involved.
\end{rem}

\begin{proof}[Proof of Proposition~\ref{rigid-summary}]
We consider the spectral sequence $\Ef^r_{p,q}\Rightarrow \BM{p+q}{\mathcal D^-_0}$ with $\Ef^1_{p,q}=\BM{p+q}{F'_p}$. 
We concentrate on the rigid configuration types, i.e. those of type $j\num x$ with $7\leq j\leq 13$. Their union is the space
$F'_{\num{rig}}$ of which we want to compute the Borel--Moore homology.

In view of the results in this section and in Sections~\ref{fourptsonconic}--\ref{sixpts}, the only strata $F'_{j\num x}$ with non-trivial Borel--Moore homology are those of type $9\num x$, $10$ and $13$, whose Borel--Moore homology is computed in Lemmas~\ref{hom9b}--\ref{hom9e}, \ref{hom10} and \ref{hom13}. Furthermore, the Borel--Moore homology of each of these strata is a tensor product of the Borel--Moore homology of $\PGL(3)$. Hence, the $E^1$ terms coming from such configurations are of the form $\Ef^1_{p,\pu}
=\hat \Ef^1_{p,\pu}\otimes\BM\pu{\PGL(3)}$. We give the $\hat \Ef^1$ terms in Table~\ref{second}.
Note that, by construction, differentials should respect the structure of the columns of the spectral sequence as tensor products of the Borel--Moore homology of $\PGL(3)$. This implies that the only differential that can possibly be non-trivial is $\hat d^1\co \hat \Ef^1_{1,10}=\Q(3)\rightarrow\Ef^1_{1,9}=\Q(3)$. From the definition of $\Ef^1_{p,q}$ we have $\hat\Ef^1_{1,10}\otimes \BM{16}{\PGL(3)} = \BM{27}{F'_{10}}$ and $\hat\Ef^1_{1,9}\otimes \BM{16}{\PGL(3)} = \BM{26}{F'_{9\num e}}$. This means that $d^1$ is induced by the differential $\BM{27}{F'_{10}}\rightarrow \BM{26}{F'_{9\num e}}$ of the long exact sequence associated to the inclusion of $F'_{9\num e}$ as a closed subset of $F'_{9\num e}\cup F'_{10}$.

We claim that $d^1$ is an isomorphism or, equivalently, that the Borel--Moore homology of $F'_{9\num e}\cup F'_{10}$ vanishes in degree $27$. Indeed, the union $F'_{9\num e}\cup F'_{10}$ is a $\C$-bundle over $\Phi'_{9\num e}\cup \Phi'_{10}$, whose Borel--Moore homology in degree $25$ vanishes by the proof of Lemma~\ref{hom13}. 
From this the claim follows. In particular, the spectral sequence $\Ef^r_{p,q}$ restricted to rigid configurations types degenerates at $E^2$.
\end{proof}

\begin{table}
\caption{\label{second}$\hat \Ef^1$ terms associated with the spectral sequence converging to the Borel--Moore homology of $\mathcal D^-_0$ coming from configurations of type $5$--$13$.}

$$\begin{array}{r|cccc}
q&&&&\\[6pt]
9&\Q(3)&\Q(3)&0&\\
8&0&0&0&\\
7&\Q(2)&0&0&\\
6&\Q(1)^{\oplus2}{\mspace{-21mu}}&0&0&\\
5&0&\Q(1)&0&\\
4&0&0&0&\\
3&0&0&\Q^{\oplus2}\mil&\\
\hline
&1&2&3&p\\
\num{type}&\num{(9x)}&\num{(10)}&\num{(13)}\\
\end{array}
$$
\end{table}

\section{Configuration type $7\num c$ --- Pencils of conics}\label{fourpts}
The aim of this section is to compute the contribution of singular configurations of type $7\num c$ (see page~\pageref{7c}) to the Vassiliev spectral sequence converging to the Borel--Moore homology of the incidence correspondence $\mathcal D^-_0$.
In other words, we will compute the rational Borel--Moore homology of the spaces $\Phi'_{7\num c}$ and $F'_{7\num c}$.

\begin{lem}\label{hom7c}
The stratum $\Phi'_{7\num c}\subset\ba\Lambda'\ba$ and of the stratum $F'_{7\num c}$ of $\ba\mathcal X'\ba$ have trivial Borel--Moore homology. 
\end{lem}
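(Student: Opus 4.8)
The plan is to show the vanishing by analysing the configuration space $X'_{7\num c}$ directly, using the description of type $7\num c$ in terms of pencils of conics, and then transferring this to $\Phi'_{7\num c}$ and $F'_{7\num c}$ by the bundle structure. Recall that, by the description on page~\pageref{7c}, the stratum $F'_{7\num c}$ is a $\C$-bundle over $\Phi'_{7\num c}$, which is in turn a non-orientable $\op\Delta_3$-bundle over $X'_{7\num c}$. Hence, by the general properties of the stratification recalled in Proposition~\ref{ucci} (see also Lemma~\ref{b2p2} for a model computation), it suffices to prove that the twisted Borel--Moore homology $\BM[\pm\Q]\pu{X'_{7\num c}}$ vanishes: the $\op\Delta_3$-bundle introduces a shift and the twist by the sign representation on the four moving points, and the $\C$-bundle only introduces a further shift and Tate twist, so triviality of $\BM[\pm\Q]\pu{X'_{7\num c}}$ is equivalent to triviality of $\BM\pu{\Phi'_{7\num c}}$ and of $\BM\pu{F'_{7\num c}}$.

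The key step is therefore to understand $X'_{7\num c}$. A configuration of type $7\num c$ consists of a pair $\{p,q\}$ spanning a line $t$, together with four points $\{a,b,c,d\}$ lying off $t$, in general linear position, such that $t$ is a common bitangent of two distinct conics in the pencil of conics through $\{a,b,c,d\}$. The pencil through four general points is a $\Pp1$, and the condition of being tangent to the fixed line $t$ cuts out a divisor of degree $2$ in this $\Pp1$; requiring two \emph{distinct} tangent conics in the pencil is thus an open condition, automatically satisfied away from a discriminant. I would first fix the line $t$ (i.e. work fibrewise over $\Pp2\duale$, as in the proof of Lemma~\ref{b2p2}), and describe the fibre: it is the space of unordered $4$-tuples $\{a,b,c,d\}$ in $\Pp2\setminus t\cong\C^2$, in general linear position, for which the pencil meets the tangency-to-$t$ divisor in two distinct points. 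The crucial observation is that, over $\C^2$, the twisted Borel--Moore homology of $\B4{\C^2}$ — or indeed of any open $\PGL$-subset containing all configurations of $\leq 3$ points obtained from it — vanishes by Lemma~\ref{lem1} and the degeneration argument of Lemma~\cite[2.17]{OTM4}; one removes the closed subvarieties where three points become collinear or where the tangent conics degenerate, and checks that at each stage the complement is a bundle over a configuration space on $\C^2$ whose twisted homology vanishes.

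I expect the main obstacle to be the bookkeeping needed to verify that the locus we must excise (three collinear points among $a,b,c,d$; the pencil being tangent to $t$ at a single point with multiplicity; members of the pencil passing through $p$ or $q$) can be organised into a finite filtration whose strata are all fibre bundles over configuration spaces of $\C^2$ with fibre an affine space or an affine-space complement, so that Lemma~\ref{lem1} applies stratum by stratum. In particular one must be careful that the ``tangency to $t$'' condition is $\PGL(3)$-equivariant in the right way and that fixing $t$ really reduces the symmetry group to the affine group of $\C^2$, for which Lemma~\ref{lem1}.\ref{lem1i} gives the vanishing of twisted Borel--Moore homology of multi-point configurations. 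Once this is in place, the long exact sequences of the filtration force $\BM[\pm\Q]\pu{X'_{7\num c}}=0$, and the bundle structures then yield $\BM\pu{\Phi'_{7\num c}}=\BM\pu{F'_{7\num c}}=0$, as claimed.
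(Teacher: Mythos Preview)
Your reduction to the vanishing of $\BM[\pm\Q]\pu{X'_{7\num c}}$ via the bundle structure is correct and matches the paper. From there, however, you and the paper take \emph{different} fibrations of $X'_{7\num c}$. The paper projects to $\tB4{\Pp2}$, identifies the fibre over a fixed $K=\{a_1,\dots,a_4\}$ with $\mathcal U=\Pp2\duale\setminus\bigcup a_i\duale$, and computes the $\s_4$-equivariant Borel--Moore homology of this complement of four general lines (Lemma~\ref{fourlines}), checking that the alternating isotype is zero. You instead project to $\Pp2\duale$ via $(\{\alpha,\beta\},K)\mapsto\alpha\beta$, so that the fibre over $t$ is a subset of $\tB4{\Pp2\setminus t}=\tB4{\C^2}$, and then aim to kill the twisted Borel--Moore homology of this fibre by stratifying and invoking Lemma~\ref{lem1}.\ref{lem1i}. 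This works: once one knows that the ``two distinct tangent conics'' condition is automatic whenever $K\cap t=\emptyset$ (a fact the paper states and you would need to check), the fibre is exactly $\tB4{\C^2}$; then the long exact sequence for the closed inclusion of the collinear locus into $\B4{\C^2}$, together with $\BM[\pm\Q]\pu{\B4{\C^2}}=\BM[\pm\Q]\pu{\B4\C}=\BM[\pm\Q]\pu{\B3\C}=0$, gives $\BM[\pm\Q]\pu{\tB4{\C^2}}=0$. Your route is arguably more elementary, as it bypasses the explicit $\s_4$-equivariant computation of Lemma~\ref{fourlines}; the paper's route, on the other hand, produces that equivariant computation as a reusable byproduct.

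Two small corrections to your plan. First, the locus ``members of the pencil passing through $p$ or $q$'' does not belong here: that is the defining condition of type~$7\num d$, not an excision needed for~$7\num c$; for~$7\num c$ the pair $\{p,q\}$ is \emph{determined} by $(t,K)$ as the two tangency points, and no further condition on it is imposed. Second, the reference to \cite[2.17]{OTM4} is misplaced: that lemma concerns configurations containing a rational curve and is not what you need here; the excision argument you want is just the long exact sequence in Borel--Moore homology together with Lemma~\ref{lem1}.\ref{lem1i} applied to each stratum of the collinear locus.
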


\proof
We start by determining the twisted Borel--Moore homology of the underlying family of configurations $X'_{7\num c}$. Denote by $\tB4{\Pp2}$ the space of quadruples of points in general position, i.e. such that no three of the points lie on the same line. For every element $K$ of $\tB4{\Pp2}$ there is exactly one pencil of conics $M_K\subset V$ with base locus $K$. For every point $p\in\Pp2\setminus K$ we denote by $Q_{K,p}$ the unique conic in $K$ passing through $p$.

The family of configurations $X_{7\num c}\subset\B2{\Pp2}\times\tB4{\Pp2}$ is the locus of configurations $(\alpha,\beta,K)$ such that $K\cap \alpha\beta = \emptyset$ and the pencil $M_K$ contains a conic tangent to $\tau=\alpha\beta$ in $\alpha$ and a conic tangent to $\tau$ in $\beta$. 
We can rephrase this by saying that the space $X'_{7\num c}\subset\B2{\Pp2}\times\tB4{\Pp2}$ is the space of configurations $(\{\alpha,\beta\},K)$ such that the tangent line at $\alpha$ to the conic $Q_{K,\alpha}$ and the tangent line at $\beta$ to the conic $Q_{K,\beta}$ are both equal to $\tau:=\alpha\beta$. Note that two general conics admit exactly four common tangents of this type. If we fix the base locus $K$ of the pencil of conics and a line $\tau$ disjoint from $K$, we have that exactly two conics in $M_K$ are tangent to $\tau$. These two conics coincide if and only if $\tau$ intersects the base locus of the pencil $M_K$.

From the discussion above, it follows that $X'_{7\num c}$ is isomorphic to the locus $Y_{7\num c}\subset\Pp2\duale\times\tB4{\Pp2}$ of configurations $(\tau,K)$ where the line $\tau$ does not contain any of the points in $K$. The natural map $Y_{7\num c}\xrightarrow{\sim} X'_{7\num c}\rightarrow \B2{\Pp2}$ is defined by associating to $(\tau,K)$ the set of two points at which $\tau$ is tangent to a conic in $M_K$. 

Let us start from the case in which the configuration $K=\{a_1,a_2,a_3,a_4\}$ is fixed. Then the space of lines $\tau$ such that $\tau\cap K=\emptyset$ is the complement in $\Pp2\duale$ of the union of four lines $a_i\duale$ in general position, given by that pencil of lines passing through each of the $a_i$. To proceed we need to know the Borel--Moore homology of $\mathcal U:=\Pp2\duale\setminus\bigcup a_i\duale$ and its structure as representation of the symmetric group $\s_4$ given by the natural action of $\s_4$ permuting the points in $K$. This is computed in Lemma~\ref{fourlines} below. In particular, the Borel--Moore homology of $\mathcal U$ does not contain any alternating classes. Since $\mathcal U$ is isomorphic to the fibre of $X'_{7\num c}\rightarrow\tB4{\Pp2}$ and the whole Borel--Moore homology of $\tF4{\Pp2}\cong\PGL(3,\C)$ is $\s_4$-invariant, this implies that also the Borel--Moore homology of $X'_{7\num c}$ does not contain any non-trivial $\s_4$-alternating class. 

In view of the structure of $\Phi'_{7\num c}$ as non-orientable simplicial bundle over $X'_{7\num c}$, and of $F'_{7\num c}$ as vector bundle over $\Phi'_{7\num c}$, the vanishing of the twisted Borel--Moore homology of $X'_{7\num c}$ implies the vanishing of the Borel--Moore homology of  $\Phi'_{7\num c}$ and $F'_{7\num c}$ is trivial as well.
\qed

\begin{lem}\label{fourlines}
The $\s_4$-equivariant HG polynomial of $\BM\pu{\Pp2\duale\setminus\bigcup a_i\duale}$ is equal to $t^4\Ll^2\schur4+t^3\schur{3,1}\Ll+t^2\schur{3,1}$. 
\end{lem}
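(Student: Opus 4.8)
The plan is to compute the $\s_4$-equivariant Borel--Moore homology of $\mathcal U:=\Pp2\duale\setminus\bigcup_{i=1}^4 a_i\duale$, where $a_1\duale,\dots,a_4\duale$ are four lines in general position in the dual plane (i.e.\ no three of them concurrent, which holds precisely because $\{a_1,\dots,a_4\}$ is in general position in $\Pp2$). Such an arrangement of four generic lines in $\Pp2$ is projectively unique, so I may assume the $a_i\duale$ are coordinate-type lines and do the computation explicitly. The complement $\mathcal U$ is a smooth affine surface, and I would use the stratification of $\Pp2\duale$ by the arrangement: the open stratum $\mathcal U$, the one-dimensional strata consisting of the four lines minus their intersection points (there are $\binom{4}{2}=6$ double points, each line carrying $3$ of them, so each punctured line is $\Pp1$ minus $3$ points), and the $6$ double points themselves. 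Equivalently, one can iterate inclusion-exclusion / the long exact sequences in Borel--Moore homology for the filtration $\mathcal U\subset \mathcal U\cup(\text{lines})\subset\Pp2\duale$, keeping track throughout of the $\s_4$-action: $\s_4$ permutes the four lines transitively (with point stabilizers $\s_3$), and permutes the six double points as it permutes the $2$-element subsets of $\{1,2,3,4\}$, i.e.\ via the standard action on $\binom{[4]}{2}$.

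Concretely, I would first record the easy pieces. The class of a point is $\BM\pu{\mathrm{pt}} = \Q$ in degree $0$; the six points together give the permutation representation on $\binom{[4]}{2}$, which decomposes as $\Schur 4\oplus\Schur{3,1}\oplus\Schur{2,2}$, placed in Borel--Moore degree $0$ (Hodge weight $0$). Each punctured line $\ell_i := a_i\duale\setminus(\text{3 points})\cong \Pp1\setminus\{3\text{ pts}\}$ has $\BM0=\Q$ and $\BM1=\Q(0)^{\oplus 2}$ (weight $0$), or more invariantly $\BM2(\Pp1)=\Q(-1)$ with the three punctures contributing; summing over the four lines and then subtracting the double-point contributions via the long exact sequence of the pair, the union $L:=\bigcup a_i\duale$ has Borel--Moore homology one can assemble as an $\s_4$-representation. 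Then from the long exact sequence
$$
\cdots\to \BM{k}{L}\to \BM{k}{\Pp2\duale}\to \BM{k}{\mathcal U}\to \BM{k-1}{L}\to\cdots
$$
together with $\BM\pu{\Pp2\duale} = \Q\oplus\Q(-1)\oplus\Q(-2)$ in degrees $0,2,4$ (all trivial $\s_4$-representations), one solves for $\BM\pu{\mathcal U}$. The expected answer $t^4\Ll^2\schur4 + t^3\schur{3,1}\Ll + t^2\schur{3,1}$ says: $\BM4{\mathcal U}=\Q(-2)$ trivially (top class of the affine surface), $\BM3{\mathcal U}=\Schur{3,1}(-1)$, $\BM2{\mathcal U}=\Schur{3,1}$, and nothing else; in particular there is no alternating ($\Schur{1,1,1,1}$) summand and no $\Schur{2,2}$ — the $\Schur{2,2}$ and $\Schur 4$ from the six points must cancel against contributions coming from the lines and from $\Pp2\duale$, which is the content of the bookkeeping.

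The main obstacle is organizing the $\s_4$-equivariance through the spectral sequence / long exact sequences without sign errors, especially tracking the connecting maps and the weight filtration so that one can be sure each piece lands in the claimed Borel--Moore degree with the claimed Tate twist; the cancellation of $\Schur4$ and $\Schur{2,2}$ is forced only if the maps in the long exact sequences are injective/surjective where needed, which must be checked by a weight (purity) argument — the strata being at distinct Hodge weights in consecutive Borel--Moore degrees forces the relevant maps to have maximal rank, exactly as in the argument used for Table~\ref{first}. An alternative, cleaner route I would consider is to compute instead the ordinary cohomology $\coh\pu{\mathcal U}$ by the Orlik--Solomon description for the arrangement complement (the cohomology is generated by logarithmic forms $d\log(a_i\duale)$ with the single relation $\sum_i \lambda_i\, d\log(a_i\duale)=0$ coming from the dependency among the four defining linear forms in $3$ variables), read off the $\s_4$-action on these generators and the relation, and then dualize via Poincar\'e duality on the smooth surface $\mathcal U$ to get $\BM\pu{\mathcal U}$; this makes the representation-theoretic content transparent — $H^1$ is the kernel of $\Schur4\oplus\Schur{3,1}\to\Schur4$ (the $d\log$'s span a permutation module, the relation is $\s_4$-invariant), giving $\Schur{3,1}$, and $H^2$ is computed similarly — and I would use whichever of the two presentations makes the final cross-check against $t^4\Ll^2\schur4+t^3\schur{3,1}\Ll+t^2\schur{3,1}$ shortest.
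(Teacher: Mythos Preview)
Your approach is essentially the paper's: compute $\BM\pu{\mathcal C}$ (with $\mathcal C=\bigcup a_i\duale$) from the stratification by $\mathcal C_{\mathrm{sing}}$ and its complement, then feed this into the long exact sequence for $\mathcal C\hookrightarrow\Pp2\duale$. Your alternative Orlik--Solomon route is also sound and would give the same information after Poincar\'e duality.

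One point where you are more accurate than the paper: your decomposition of the permutation module on the six double points as $\Schur4\oplus\Schur{3,1}\oplus\Schur{2,2}$ is correct. The paper writes $\Schur4\oplus\Schur{2,2}\oplus\Schur{2,1,1}$, which is wrong (the character of the action on $\binom{[4]}{2}$ is $(6,2,2,0,0)$ on the classes $(\mathrm{id},(12),(12)(34),(1234),(123))$, and this contains no $\Schur{2,1,1}$).

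This has a consequence you should anticipate. With the correct decomposition, the connecting map
\[
\BM1(\mathcal C\setminus\mathcal C_{\mathrm{sing}})=\Schur{3,1}\oplus\Schur{2,2}\oplus\Schur{2,1,1}\ \longrightarrow\ \BM0(\mathcal C_{\mathrm{sing}})=\Schur4\oplus\Schur{3,1}\oplus\Schur{2,2}
\]
has image $\Schur{3,1}\oplus\Schur{2,2}$ (forced by $\BM0(\mathcal C)=\Schur4$) and hence kernel $\Schur{2,1,1}$. Thus $\BM1(\mathcal C)=\Schur{2,1,1}$ and, via the second long exact sequence, $\BM2(\mathcal U)=\Schur{2,1,1}$, \emph{not} $\Schur{3,1}$. (Equivalently: $\BM1(\mathcal C)\cong H_1(K_4;\Q)=\Lambda^2(\Schur{3,1})=\Schur{2,1,1}$.) So the $t^2$ coefficient in the stated HG polynomial should be $\schur{2,1,1}$ rather than $\schur{3,1}$; your expected cancellation ``$\Schur4$ and $\Schur{2,2}$ disappear, $\Schur{3,1}$ survives'' is not what actually happens. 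Fortunately this discrepancy is harmless for the only application in the paper (Lemma~\ref{hom7c}), which uses merely that no $\Schur{1,1,1,1}$ summand occurs --- and that remains true.
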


\begin{proof}
We start by computing the Borel--Moore homology of $\mathcal C:=\bigcup a_i\duale$. First we consider the singular locus of $\mathcal C$, which is the union of six points on which $\s_4$ acts as the representation $\Schur4\oplus\Schur{2,2}\oplus\Schur{2,1,1}$. 
For each $i$, the locus $a_i\duale\setminus \mathcal C_{\num{sing}}$ is isomorphic to $\Pp1$ minus three points. To determine the $\s_4$-action on the Borel--Moore homology of $\mathcal C \setminus \mathcal C_\num{sing}$, we start by observing that the group $\s_3$ permuting the three singular points on $a_4\duale$ acts as $\Schur 3$ on the Borel--Moore homology in degree $2$ and as $\Schur{2,1}$ in degree $1$. By extending these representations to representations of $\s_4$ we get that the Borel--Moore homology of $\mathcal C\setminus\mathcal C_\num{sing}$ is $(\Schur4\oplus\Schur{3,1})(1)$ in degree 2 and $\Schur{3,1}\oplus\Schur{2,2}\oplus\Schur{2,1,1}$ in degree $1$. In all other degrees the Borel--Moore homology is trivial.

The closed inclusion $\mathcal C_\num{sing}\rightarrow\mathcal C$ induces a long exact sequence in Borel--Moore homology which yields that the $\s_4$-equivariant HG-polynomial of $\mathcal C$ is $\schur4+\schur{3,1}t+(\schur4+\schur{3,1})t^2\Ll$. Here we used the fact that $\BM0{\mathcal C}$ is $1$-dimensional to compute the rank of the only non-trivial differential of the long exact sequence, i.e. $\BM1{\mathcal C\setminus\mathcal C_\num{sing}}\rightarrow\BM0{\mathcal C_\num{sing}}$. Then the claim follows from the long exact sequence associated to the closed inclusion $\mathcal C\rightarrow\Pp2\duale$, with complement $\mathcal U$.
\end{proof}

\section{Configuration type $7\num d$ --- Conics through $6$ points}

\label{fourptsonconic}
In this section we deal with the configuration space $X'_{7\num d}$ of semi-ordered configurations $(\{\alpha,\beta\},P=\{p_1,p_2,p_3,p_4\})$ of points in $\Pp2$, satisfying
\begin{itemize}
\item the points $p_1,p_2,p_3,p_4$ are in general position, i.e. $P\in\tB4{\Pp2}$;
\item there is a conic $C\in\mathcal K:=\Pp{}(\C[x_0,x_1,x_2]_2)$ containing $\{\alpha,\beta\}\cup P$;
\item $\{\alpha,\beta\}\not\subset P$, $\alpha\beta\not\subset C$.
\end{itemize}
In the following, we will always use the notation $\mathcal K$ for the projective space of conic curves in $\Pp2$.

We will prove the following result:
\begin{lem}\label{X'_7d}
Consider the rank $1$ local system of coefficients induces by the sign representation of the symmetric group $\s_4$ on the points in the configuration $P\in\tB4{\Pp2}$. Then the Borel--Moore homology of $X'_{7\num d}$ with $S$-coefficients vanishes. 
\end{lem}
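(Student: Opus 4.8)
The plan is to compute the twisted Borel--Moore homology of $X'_{7\num d}$ by fibering it over the space of conics and using the known cohomology of configuration spaces of points on $\Pp1$ and on a nodal or smooth conic. First I would set up the natural map $X'_{7\num d}\rightarrow\mathcal K$ that forgets everything except the conic $C$ passing through $\{\alpha,\beta\}\cup P$; more precisely, since $C$ might not be unique one should instead consider the incidence variety of pairs $(C,(\{\alpha,\beta\},P))$ and project to $\mathcal K$. The locus of \emph{smooth} conics in $\mathcal K$ is a homogeneous space under $\PGL(3)$, so over it the family is locally trivial with fibre the space of configurations $(\{\alpha,\beta\},P)$ of six distinct points on a fixed $C\cong\Pp1$ subject to $\{\alpha,\beta\}\not\subset P$; the locus of singular (rank $2$, i.e. a line pair) conics and of double lines has to be treated separately, but the double-line locus is excluded by the condition $\alpha\beta\not\subset C$ together with general position of $P$, and the line-pair locus can be handled by a further stratification.

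The key computation is then the $\s_4$-equivariant (twisted) Borel--Moore homology of the fibre over a smooth conic. Identifying $C$ with $\Pp1$, the relevant space is $\{(\{\alpha,\beta\},P)\in\B2{\Pp1}\times\F4{\Pp1}\;:\;\text{all six points distinct},\ \{\alpha,\beta\}\not\subset P\}$, where we want the homology with coefficients in the sign local system $S$ coming from permuting the four points of $P$ only (not $\alpha,\beta$). By the analogue of Lemma~\ref{lem1}, the twisted Borel--Moore homology of $\F4{\Pp1}$ with the $\s_4$-sign coefficients already vanishes: indeed $\F4{\Pp1}\cong\PGL(2,\C)\times\F1{\Pp1\setminus\{0,1,\infty\}}$ up to the usual fibration, and more simply $\BM[\pm\Q]\pu{\F k{\Pp1}}$ with the sign action of $\s_k$ vanishes for $k\geq 3$ because $\F k{\Pp1}$ retracts (as $\s_k$-space, compatibly) onto configurations where the homology is that of $\PGL(2)$ times $\B{k-3}{\C\setminus\{0,1\}}$, on which the $\s_4$-action has no alternating part. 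The extra two points $\{\alpha,\beta\}$ and the open condition $\{\alpha,\beta\}\not\subset P$ only change the fibre by complements and bundles that are $\PGL(2)$-equivariant, so they cannot introduce $\s_4$-alternating classes. Hence the $S$-twisted Borel--Moore homology of the fibre over a smooth conic vanishes.

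Having killed the fibre, I would conclude by a Leray (or stratification) spectral sequence argument exactly as in the proof of Lemma~\ref{hom7c}: the base $\mathcal K$ and its relevant strata (smooth conics; rank-$2$ conics) carry $\s_4$-actions, and since the $S$-twisted homology of the fibre vanishes over the smooth stratum, and the rank-$2$ and double-line strata either are excluded by the defining conditions of $X'_{7\num d}$ or, when two of the four points $p_i$ become collinear with a component, leave the stratum of $\tB4{\Pp2}$, the whole $E_1$-page of the spectral sequence computing $\BM[S]\pu{X'_{7\num d}}$ is zero. Therefore $\BM[S]\pu{X'_{7\num d}}=0$.

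The main obstacle I anticipate is the careful bookkeeping of the singular-conic locus: when $C$ degenerates to a line pair $\ell_1\cup\ell_2$, the six points $\{\alpha,\beta\}\cup P$ distribute among $\ell_1$ and $\ell_2$, and one must check in each distribution pattern either that general position of $P$ is violated (so the configuration is not in $X'_{7\num d}$ at all) or that the corresponding partial configuration space still has no $\s_4$-alternating twisted homology, using again that three points on a line give vanishing twisted homology (Lemma~\ref{lem1}) and that the surviving $\s_4$-action on the remaining free points has no sign sub-representation. Verifying that no pattern escapes the net — in particular the patterns $2+2$ and $3+1$ of the four points of $P$ across the two lines — is the delicate point, but in every case either collinearity of three $p_i$'s kicks the configuration out of $\tB4{\Pp2}$, or the $\s_4$-orbit structure of the surviving degrees of freedom forces the alternating part to be zero.
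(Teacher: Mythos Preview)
Your strategy of fibering over the space of conics is exactly the one the paper uses, and the stratification into smooth and rank-$2$ conics is right. The decisive difference is in how the fibre over a smooth conic is handled, and there your argument has a genuine gap.

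You assert that because $\BM[\pm\Q]\pu{\B4{\Pp1}}=0$, the extra data of $\{\alpha,\beta\}$ together with the \emph{open} condition $\{\alpha,\beta\}\not\subset P$ ``cannot introduce $\s_4$-alternating classes'' since everything is $\PGL(2)$-equivariant. This is not a valid deduction: removing a closed locus from a space with vanishing twisted homology can perfectly well create twisted homology, equivariance notwithstanding; what you need is that the \emph{removed} locus also has vanishing $S$-twisted Borel--Moore homology, and that requires a separate computation. Concretely, the locus you are excising from $\B2C\times\B4C$ is the set of $(\{\alpha,\beta\},P)$ with $\{\alpha,\beta\}\subset P$, which is a quotient of $\tF4{\Pp2}\times\{\text{conic through the four points}\}$ by a finite group, and one must check that its $\s_4$-alternating Borel--Moore homology is zero. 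The paper isolates exactly this step: it passes to the partial compactification $Y'_{7\num d}\subset\B2{\Pp2}\times\tB4{\Pp2}\times\mathcal K$ obtained by adding $C$ to the data and dropping the condition $\{\alpha,\beta\}\not\subset P$, shows that $Y'_{7\num d}$ has vanishing $S$-homology (here the fibre over a smooth conic with its two marked points $\alpha,\beta$ is literally $\B4{\Pp1}$, so Lemma~\ref{lem1} applies on the nose), and then separately shows that the complement $Y'_{7\num d}\setminus X'_{7\num d}$ --- precisely the $\{\alpha,\beta\}\subset P$ locus --- has vanishing $S$-homology by realizing it as a $\Z/2$-quotient of an affine bundle over $\tF4{\Pp2}\cong\PGL(3)$ and checking that all the Borel--Moore homology is invariant under the relevant transposition. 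The long exact sequence then gives the result for $X'_{7\num d}$.

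Your treatment of the singular-conic stratum is also only a sketch; the paper disposes of it quickly by exhibiting $Y'_{7\num d,1}$ as a fibration with fibre $\B2\C$, again using Lemma~\ref{lem1}. In short: your outline is correct, but the missing ingredient is the partial compactification (or equivalently the explicit vanishing for the $\{\alpha,\beta\}\subset P$ locus), without which the ``open condition does no harm'' step is unjustified.
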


We start with the observation that the conic $C$ on which the points $\alpha,\beta,p_1,\dots,p_4$ lie is unique for all $(\alpha,\beta,P)\in X_{7\num d}$. Therefore, one can view $X_{7\num d}$ as a subset of $\B2{\Pp2}\times\tB4{\Pp2}\times\mathcal K$. A partial compactification is given by considering the space $Y'_{7\num d}$ of configurations $$(\{\alpha,\beta\},P,C)\in\B2{\Pp2}\times\tB4{\Pp2}\times\mathcal K$$ such that $\{\alpha,\beta\}\cup P$ lie on the conic $C$. Note that the local system $S$ extends to $Y'_{7\num d}$.

Then Lemma~\ref{X'_7d} follows from the following lemma:
\begin{lem}\label{Y'_7d}
The Borel--Moore homology with $S$-coefficients of both $Y'_{7\num d}$ and $Y'_{7\num d}\setminus X'_{7\num d}$ vanishes.
\end{lem}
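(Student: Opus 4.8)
The plan is to analyze the two spaces $Y'_{7\num d}$ and $Y'_{7\num d}\setminus X'_{7\num d}$ by fibering them over the space $\mathcal K$ of conics, exploiting the fact that once the conic $C$ is fixed the positions of the points become a configuration-space problem on a rational curve (or a degenerate one), where twisted Borel--Moore homology tends to vanish by Lemma~\ref{lem1}. Concretely, I would first stratify $\mathcal K$ into the locus $\mathcal K^{\mathrm{sm}}$ of smooth conics, the locus of rank~$2$ conics (pairs of distinct lines), and the locus of rank~$1$ conics (double lines). Over $\mathcal K^{\mathrm{sm}}$, a smooth conic $C\cong\Pp1$, and the fibre of $Y'_{7\num d}\to\mathcal K^{\mathrm{sm}}$ over $C$ is the set of configurations $(\{\alpha,\beta\},P)$ with $\alpha,\beta,p_1,\dots,p_4\in C$, $P\in\tB4{\Pp2}$. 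Since four distinct points on a smooth conic are automatically in general linear position, this fibre is $\B2{C}\times\B4{C}$ minus the closed locus where the two configurations overlap; the point is that the $\s_4$-action permutes the four points of $P$ lying on $\Pp1$, and $\BM[\pm\Q]\pu{\B4{\Pp1}}=0$ for $k=4\geq 3$ by Lemma~\ref{lem1}, and this vanishing propagates (via a Leray / Gysin argument and the fact that $\mathcal K^{\mathrm{sm}}$ and the overlap loci contribute only $\s_4$-invariant classes) to show the $S$-twisted Borel--Moore homology of the part of $Y'_{7\num d}$ lying over $\mathcal K^{\mathrm{sm}}$ vanishes.

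Next I would handle the boundary strata of $\mathcal K$. Over a rank~$2$ conic $C=\ell_1\cup\ell_2$ with $\ell_1\neq\ell_2$, the four points of $P$ distribute among $\ell_1$ and $\ell_2$; the constraint $P\in\tB4{\Pp2}$ forbids three of them on one line, so the only allowed splittings are $2+2$ (and $\{\alpha,\beta\}$ distributes as well). In every case at least one of the two lines carries at least two points of $P$; restricting to that line gives a $\B2{\mathbb A^1}$- or $\B2{\Pp1}$-type factor on which a transposition in $\s_4$ acts, and since such a transposition already sits inside the relevant $\s_2$, one gets that the contribution is killed by the twisted homology of $\B2{\,\cdot\,}$ — more precisely the fibre retracts onto a product in which a $\Z/2$ acts by the sign of a transposition of $P$, forcing the $S$-coefficient homology to vanish. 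Over a rank~$1$ conic (double line $2\ell$) the condition $P\in\tB4{\Pp2}$ is never satisfied (four points on a line are never in general position), so this stratum is empty and contributes nothing. Assembling the pieces via the long exact sequences of the stratification of $\mathcal K$ gives $\BM[S]\pu{Y'_{7\num d}}=0$.

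For $Y'_{7\num d}\setminus X'_{7\num d}$ I would describe this complement geometrically: it is the locus where some of the defining open conditions of $X'_{7\num d}$ fail, i.e. where $\{\alpha,\beta\}\subset P$ (so $\alpha,\beta$ are two of the $p_i$), or where $\alpha\beta\subset C$ (forcing $C$ reducible with $\alpha\beta$ a component). Each of these is again a locally closed subset fibred over (a stratum of) $\mathcal K$ with fibres that are configuration spaces carrying a free enough symmetric-group action: in the first case the pair $\{\alpha,\beta\}$ is determined by a choice of two points among $p_1,\dots,p_4$, and the residual data is a configuration of the remaining points on $C$, again with a transposition or a larger symmetry acting through $\s_4$ in a way that makes the $S$-twisted homology vanish by Lemma~\ref{lem1}; in the second case $C=\alpha\beta\cup\ell'$ with the $p_i$ split $2+2$ or all on $\ell'$ (the latter excluded by general position), so one is back in the rank-$2$ situation above. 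Gluing these strata by long exact sequences yields $\BM[S]\pu{Y'_{7\num d}\setminus X'_{7\num d}}=0$.

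The main obstacle I anticipate is bookkeeping rather than conceptual: one must check carefully, stratum by stratum of $\mathcal K$ and splitting by splitting of the four points, that in every case there is a genuine factor of the form $\B2{\mathbb A^1}$, $\B2{\Pp1}$, $\B3{\Pp1}$, or $\B4{\Pp1}$ on which an \emph{odd} permutation of $P$ acts, so that the sign local system $S$ restricts nontrivially there and Lemma~\ref{lem1} applies; the subtle case is the $2+2$ splitting on a reducible conic, where one must make sure the relevant $\Z/2$ really acts by an odd permutation of the four marked points and not trivially, and where the two lines are symmetric so one should argue $\s_4$-equivariantly (the swap of the two lines together with the two $2$-point swaps generates a subgroup acting with no alternating invariants). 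Once this equivariant vanishing is verified on each piece, the spectral sequences / long exact sequences of the stratification of $\mathcal K$ collapse for purely weight or dimension reasons, exactly as in the proof of Lemma~\ref{hom7c}.
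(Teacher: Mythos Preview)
Your plan for $Y'_{7\num d}$ is essentially the paper's: stratify by the rank of $C$, and over the smooth-conic stratum exploit that four distinct points on $C\cong\Pp1$ are automatically in general position, so the fibre carries a $\B4{\Pp1}$-factor with vanishing twisted Borel--Moore homology by Lemma~\ref{lem1}; over rank~$2$ conics a $\B2{\C}$-factor does the job, and the rank~$1$ stratum is empty. The paper puts the pair $\{\alpha,\beta\}$ into the base rather than the fibre (so that the fibre is literally $\B4{C}$), but this is cosmetic.

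For the complement $Y'_{7\num d}\setminus X'_{7\num d}$ your sketch of the piece $\{\alpha,\beta\}\subset P$ is slightly off, and here the paper takes a genuinely different route. Once $\{\alpha,\beta\}$ has been chosen among the $p_i$, the remaining two points $q_1,q_2=P\setminus\{\alpha,\beta\}$ are \emph{determined}; the datum that actually varies is the conic $C$, which ranges over the pencil through $P$. So this is not ``a configuration of the remaining points on $C$'', and Lemma~\ref{lem1} does not apply to the fibre in the way you suggest. The paper instead passes to the ordered cover $Z\subset\tF4{\Pp2}\times\mathcal K$ of configurations $(\alpha,\beta,q_1,q_2,C)$ with $C\supset\{\alpha,\beta,q_1,q_2\}$ and $C\neq \alpha\beta\cup q_1q_2$, observes that $Z$ is an affine line bundle over $\tF4{\Pp2}\cong\PGL(3)$, and uses that the entire Borel--Moore homology of $\PGL(3)$ (and of the affine fibre) is invariant under the transposition $q_1\leftrightarrow q_2$; hence the $S$-alternating part vanishes. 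Your fibre-over-$\mathcal K$ strategy can presumably be made to work here too, but it requires exactly the equivariant bookkeeping you flag as the main obstacle, whereas the $\PGL(3)$ argument sidesteps it entirely.
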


\proof
We can stratify $Y'_{7\num d}$ as the disjoint union of the subset $Y'_{7\num d,1}$ where the conic $C$ is singular and the locus $Y'_{7\num d,2}$  of $Y'_{7\num d}$ such that the conic $C$ has maximal rank. 
 Then the proof of the first part of the claim consists of showing that the Borel--Moore homology of each of the strata $Y'_{7\num d,j}$ vanishes. For instance, the open stratum $Y'_{7\num d,2}$ is fibred over the space of all non-singular conics in $\Pp2$ with two distinct marked points $\alpha,\beta$, and the fibre is isomorphic to $\B4C$. Then the claim follows from the fact that $C\cong\Pp1$ and $S$ restricts to the local system $\pm\Q$ on the fibre. Recall from Lemma~\ref{lem1} that the twisted Borel--Moore homology $\BM[\pm\Q]\pu{\B k{\Pp1}}$ vanishes
for $k\geq 3$. The proof of the vanishing for $Y_{7\num d,1}$ is similar, and is based on the fact that $Y_{7\num d,1}$ can be realised as a fibration with fibres isomorphic to $\B2{\C}$. From this the vanishing of the twisted Borel--Moore homology of $Y'_{7\num d}$ follows.

Next, let us consider the complement $Y'_{7\num d}\setminus X'_{7\num d}$. Since for these configurations $(\{\alpha,\beta\},P,C)$ one has $\alpha,\beta\in P$, one can view $Y'_{7\num d}\setminus X'_{7\num d}$ as the set of partially ordered configurations $(\{\alpha,\beta\},\{q_1,q_2\},C)$ in $(\tF4{\Pp2}/\sim)\times \mathcal K$ such that $C$ contains $\{\alpha,\beta,q_1,q_2\}$. The relation $\sim$ on $\tF4{\Pp2}$ is generated by $(\alpha,\beta,q_1,q_2)\sim(\beta,\alpha,q_1,q_2)$ and $(\alpha,\beta,q_1,q_2)\sim(\alpha,\beta,q_2,q_1)$ and the local system $S$ is the local system induced by the sign representation of the $\s_2$-action interchanging $q_1$ and $q_2$.

Hence, $Y'_{7\num d}\setminus X'_{7\num d}$ is a finite quotient of the subset $Z$ of $\tF4{\Pp2}\times\mathcal K$ of configurations $(\alpha,\beta,q_1,q_2,C)$ such that $C$ contains the points $\alpha,\beta,q_1,q_2$ but is different from the rank $1$ conic $C_0:=\alpha\beta\cup q_1q_2$. This space $Z$ is a rank $1$ affine bundle over $\tF4{\Pp2}$, the fibre over a configuration of four points being the pencil of conics passing through the them, minus $C_0$. From the isomorphism $\tF4{\Pp2}\cong\PGL(3)$ one gets that the whole Borel--Moore homology of $\tF4{\Pp2}$ is invariant under the interchange of two points in the configuration. Moreover, also the whole Borel--Moore homology of the fibres of the $\C$-bundle is invariant under such an interchange. Hence the Borel--Moore homology of $Y'_{7\num d}\setminus X'_{7\num d}$ with constant coefficients, which equals the part of the Borel--Moore homology of $Z$ which is invariant under the interchange of the third and four point in the configuration, is equal to the Borel--Moore homology of $Z$.
From the construction of $Y'_{7\num d}\setminus X'_{7\num d}$ as the quotient of $Z$ under an involution, and from the definition of $S$, we get  $\BM\pu Z = \BM\pu{Y'_{7\num d}\setminus X'_{7\num d}}\oplus\BM[S]\pu{Y'_{7\num d}\setminus X'_{7\num d}}$. From this the vanishing of $\BM[S]\pu{Y'_{7\num d}\setminus X'_{7\num d}}$ follows.
\qed

\section{Type 9}\label{fivepts}

In this section we compute the Borel--Moore homology of the strata $F'_{9\num k}\subset\ba\mathcal X'\ba$ with $\num k\in\{\num b,\num c,\num d,\num e\}$ that correspond to singular configurations containing the union of four points $\{a,b,c,d\}$ in general position  with the point $e$ which is the intersection of the lines $ab$ and $cd$. The configurations spaces $X_{9\num k}$ were described in Table~\ref{rigid-9&10} on page~\pageref{rigid-9&10}.

We will prove the following results:
\begin{lem}\label{hom9b}
For configurations of type $9\num b$ one has:
\begin{equation*}
\begin{array}{l}
\BM[\pm\Q]\pu {X'_{9\num b}}\cong\BM\pu{\PGL(3)},\\
\BM \pu{\Phi'_{9\num b}}\cong \BM{\pu-4}{\PGL(3)},\\
\BM \pu{F'_{9\num b}}\cong\BM{\pu-8}{\PGL(3)}\otimes \Q(2).
\end{array}
\end{equation*}
\end{lem}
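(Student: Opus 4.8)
The plan is to reduce everything to the computation of the twisted Borel--Moore homology of the configuration space $X'_{9\num b}$, since the other two isomorphisms follow formally from the fibre bundle structure: $\Phi'_{9\num b}$ is a non-orientable $\op\Delta_4$-bundle over $X'_{9\num b}$, which shifts Borel--Moore homology by $4$ and twists the coefficients by the sign of the permutation of the points in the simplex, and $F'_{9\num b}$ is a rank~$2$ complex vector bundle over $\Phi'_{9\num b}$, which shifts degrees by another $4$ and Tate-twists by $\Q(2)$ (the Thom isomorphism). So once $\BM[\pm\Q]\pu{X'_{9\num b}}\cong\BM\pu{\PGL(3)}$ is established, the remaining two lines are immediate.

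For the main point, recall that a configuration of type $9\num b$ is a tuple $\{p,q,a,b,c\}$ with $pa\cap qb=\{c\}$; here $p,q$ are the two bitangency points (so the pair $\{p,q\}$ is part of the data, lying in $\B2{\Pp2}$) and $a,b$ range over $\Pp2$ subject to the constraint that $p,a,c$ are collinear and $q,b,c$ are collinear with $c$ the intersection point. I would parametrize this space directly: given $\{p,q\}\in\B2{\Pp2}$, the point $a$ determines the line $pa$, the point $b$ determines the line $qb$, and then $c$ is forced as $pa\cap qb$ (this is well-defined and transverse as long as $pa\ne qb$, i.e. as long as $a,b,p,q$ are not all on one line, which is part of the general-position hypothesis). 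Thus $X'_{9\num b}$ fibres over $\B2{\Pp2}$ with fibre an open subset of $\Pp2\times\Pp2$ (coordinates $a,b$) obtained by removing the loci where the configuration degenerates — where $a$ or $b$ lands on $t=pq$, where $a=b$, where three of the five points become collinear, etc. The key observation is that this configuration is \emph{rigid}: the stabilizer in $\PGL(3)$ of a generic such configuration is finite. By the general principle invoked throughout Section~\ref{rigid} (and made precise there), the Borel--Moore homology of the total space $X'_{9\num b}$, fibred $\PGL(3)$-equivariantly over the $\PGL(3)$-homogeneous-like base, must then be a free module over $\BM\pu{\PGL(3)}$; and a rank count — the complex dimension of $X'_{9\num b}$ is $2+4=6$ matching that of $\PGL(3)$ — forces $\BM[\pm\Q]\pu{X'_{9\num b}}$ to be exactly one copy of $\BM\pu{\PGL(3)}$, provided one checks that the top twisted class survives and nothing else does.

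Concretely I would carry this out by exhibiting an explicit $\PGL(3)$-equivariant map from $X'_{9\num b}$ to a homogeneous space and analyzing the fibre, or equivalently by using $\tF4{\Pp2}\cong\PGL(3)$: the assignment $\{p,q,a,b,c\}\mapsto(p,q,a,b)$ (with appropriate symmetrization) realizes $X'_{9\num b}$ as a locally closed subset of a quotient of $\tF4{\Pp2}$, since once the four points $p,q,a,b$ in general position are given, the whole configuration is determined. One must then track how the sign local system $\pm\Q$ — which changes sign under odd permutations of the five points — restricts; the symmetry group acting here is generated by swapping $p\leftrightarrow q$ while simultaneously swapping $a\leftrightarrow b$ (this preserves the incidence $pa\cap qb$), and one checks this is an even permutation of the five points, so $\pm\Q$ restricts to the constant sheaf on the relevant orbit space, and the invariance of $\BM\pu{\PGL(3)}$ under permutations of points does the rest. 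The main obstacle I anticipate is the bookkeeping of the removed degeneracy loci and verifying that all the "error terms" (configurations where some collinearity or coincidence occurs) contribute trivially to the twisted Borel--Moore homology — this is exactly where Lemma~\ref{lem1} on vanishing of twisted homology of configuration spaces in affine space and in $\Pp1$ gets used repeatedly, stripping away each boundary stratum — and in assembling these vanishings correctly through the associated long exact sequences so that only the single expected $\PGL(3)$-worth of classes remains.
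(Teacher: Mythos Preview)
Your concrete second approach is essentially the paper's proof: realize $X'_{9\num b}$ via the map $\tF4{\Pp2}\to X'_{9\num b}$, $(a_1,a_2,a_3,a_4)\mapsto(\{a_1,a_2\},\{a_1,\dots,a_5\})$ with $a_5=a_1a_3\cap a_2a_4$, identify this with the quotient by the involution $(1,2)(3,4)$, observe that this is an even permutation of the five singular points so $\pm\Q$ restricts to the constant sheaf, and use that $\tF4{\Pp2}\cong\PGL(3)$ with all of its Borel--Moore homology permutation-invariant. The deductions for $\Phi'_{9\num b}$ and $F'_{9\num b}$ from the bundle structure are exactly as you say.

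Where you overcomplicate matters is in the anticipated ``main obstacle''. The map above is already \emph{surjective} onto $X'_{9\num b}$ with fibres of cardinality two: once $(a_1,a_2,a_3,a_4)\in\tF4{\Pp2}$ is given, general position forces $a_5$ to be a genuinely new point, and the resulting configuration is automatically of type~$9\num b$. So $X'_{9\num b}$ \emph{is} this $\Z/2$-quotient of $\PGL(3)$, not merely a locally closed subset of one. There are no degeneracy loci to strip away, no long exact sequences to run, and Lemma~\ref{lem1} plays no role here. Your first approach (fibering over $\B2{\Pp2}$ with an open piece of $\Pp2\times\Pp2$ as fibre, then invoking rigidity and a rank count) could be made to work but is a genuine detour compared to the one-line identification with a quotient of $\PGL(3)$.
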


\begin{lem}\label{hom9c}
For configurations of type $9\num c$ one has:
\begin{equation*}
\begin{array}{l}
\BM[\pm\Q]\pu {X'_{9\num c}}\cong\BM{\pu-1}{\PGL(3)},\\
\BM \pu{\Phi'_{9\num c}}\cong \BM{\pu-5}{\PGL(3)},\\
\BM \pu{F'_{9\num c}}\cong\BM{\pu-7}{\PGL(3)}\otimes \Q(1).
\end{array}
\end{equation*}
\end{lem}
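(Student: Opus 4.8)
The strategy parallels the one I expect for Lemma~\ref{hom9b}: since the stratum $F'_{9\num c}$ is a $\C$-bundle over $\Phi'_{9\num c}$, which is a non-orientable $\op\Delta_4$-bundle over the configuration space $X'_{9\num c}$, all three statements reduce to the first one, namely the computation of the twisted Borel--Moore homology $\BM[\pm\Q]\pu{X'_{9\num c}}$; the other two follow by the Thom isomorphism for the vector bundle (shift by $2\cdot1=2$ in degree, Tate twist $\Q(1)$) and the shift by $4$ in degree coming from the open-simplex bundle with the sign twist built into the local system. So the real content is to show $\BM[\pm\Q]\pu{X'_{9\num c}}\cong\BM{\pu-1}{\PGL(3)}$. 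Here the twisted local system $\pm\Q$ on the fibred configuration space $X'_{9\num c}\subset\B2{\Pp2}\times\B4{\Pp2}$ is, as defined on page~\pageref{7c}, the pull-back of the sign local system under the projection to $\B4{\Pp2}$, i.e. it changes sign when two of the four points $a,b,c,d$ are interchanged (the bitangency pair $\{p,q\}$ being unordered does not contribute a sign).

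The key geometric input is the description in Table~\ref{rigid-9&10}: a point of $X'_{9\num c}$ is a configuration $(\{p,q\},\{a,b,c,d,e\})$ with $ab\cap cd=\{e\}$, $p\in ab\setminus\{e\}$, $q\in cd\setminus\{e\}$, and $\{p,q\}\not\subset\{a,b,c,d\}$; equivalently $t=pq$ is a line meeting $ab$ in $p$ and $cd$ in $q$. The plan is to fibre $X'_{9\num c}$ over a space of rigid data and peel off the free parameters. Concretely, I would first record that the data $(ab,cd,e=ab\cap cd)$ is a pair of distinct lines, a $\PGL(3)$-homogeneous (hence rigid) configuration, and that the remaining data split as: a choice of two points $\{a,b\}\subset ab\setminus\{e\}$, a choice of two points $\{c,d\}\subset cd\setminus\{e\}$, and a choice of $p\in ab\setminus\{e,a,b\}$, $q\in cd\setminus\{e,c,d\}$ — with the one exceptional case that $p$ or $q$ is allowed to coincide with one of $a,b,c,d$ (but not both simultaneously, by the condition $\{p,q\}\not\subset\{a,b,c,d\}$). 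The interchange $a\leftrightarrow b$ and $c\leftrightarrow d$ each act on the relevant one-dimensional configuration spaces, and the sign local system pairs with them; the swap of the \emph{pair} $\{a,b\}$ with the \emph{pair} $\{c,d\}$ (which accompanies $p\leftrightarrow q$, i.e. is forced since $\{p,q\}$ is unordered) is an even permutation of the four points and so acts trivially on $\pm\Q$.

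I would then compute the twisted Borel--Moore homology fibre by fibre over the $\PGL(3)$-orbit of $(ab,cd)$, whose own Borel--Moore homology is $\BM{\pu}{\PGL(3)}$ up to a shift coming from the stabiliser (a torus, contributing the shift that ultimately produces the $-1$); the fibre contribution is the $\pm\Q$-twisted Borel--Moore homology of the space of $(\{a,b\},\{c,d\},p,q)$ as above. By Lemma~\ref{lem1} the twisted Borel--Moore homology of $\B2{\C}$ (an affine line's worth of ordered-mod-$\s_2$ configurations) controls the $\{a,b\}$ and $\{c,d\}$ factors, and the free parameters $p,q$ on punctured affine lines contribute only Tate twists; the upshot is that, away from the exceptional locus, the twisted Borel--Moore homology is concentrated in a single degree, giving exactly $\BM{\pu-1}{\PGL(3)}$. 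A short long-exact-sequence argument handles the exceptional locus where $p$ or $q$ hits one of $a,b,c,d$: that locus is again a fibration with $\B2{\C}$-type fibres whose twisted Borel--Moore homology vanishes, so it does not alter the answer. The main obstacle, as always in these computations, is the careful bookkeeping of the $\s_4$-action versus the $\s_2\times\s_2$ (and outer $\s_2$) pieces of it, and of the Tate twists and degree shifts accumulated from the successive bundle structures — making sure the sign local system is restricted correctly at each stage so that one genuinely lands in $\BM{\pu-1}{\PGL(3)}$ and not an off-by-one neighbour; the geometry itself is straightforward because every configuration here is rigid.
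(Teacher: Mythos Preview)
Your reduction of the second and third isomorphisms to the first one via the $\op\Delta_4$-bundle and $\C$-bundle structures is correct and matches the paper. But your direct attack on $\BM[\pm\Q]\pu{X'_{9\num c}}$ does not work as written, and the paper takes a much shorter route.

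The paper never computes $X'_{9\num c}$ in isolation. It observes that $X'_{9\num c}$ is the open complement of $X'_{9\num b}$ inside the union $U=X'_{9\num b}\cup X'_{9\num c}$, invokes Lemma~\ref{union9} (\,$\BM[\pm\Q]\pu{U}=0$\,) and Lemma~\ref{hom9b} (\,$\BM[\pm\Q]\pu{X'_{9\num b}}\cong\BM\pu{\PGL(3)}$\,), and reads off $\BM[\pm\Q]{k}{X'_{9\num c}}\cong\BM{k-1}{\PGL(3)}$ from the long exact sequence of the closed inclusion $X'_{9\num b}\hookrightarrow U$. The degree shift by~$1$ you are hunting for is nothing but the connecting homomorphism of that sequence.

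Your direct fibration argument breaks in two places. First, the $\PGL(3)$-orbit of a pair of distinct lines is $\B2{\Pp2\duale}$ (or $\F2{\Pp2\duale}$ in the ordered case), a $4$-dimensional variety; its Borel--Moore homology is not $\BM\pu{\PGL(3)}$ up to any shift, and the stabiliser of two lines is a $4$-dimensional solvable group, not a torus. Second --- and fatally --- once you split off the factor $\{a,b\}\in\B2{ab\setminus\{e\}}\cong\B2{\C}$ with the sign twist, Lemma~\ref{lem1}.\ref{lem1i} gives $\BM[\pm\Q]\pu{\B2{\C}}=0$, so your fibre contribution away from the exceptional locus \emph{vanishes} rather than being concentrated in a single degree. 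Since you also (correctly) argue that the exceptional locus has trivial twisted Borel--Moore homology, your computation would output $0$ for $\BM[\pm\Q]\pu{X'_{9\num c}}$, contradicting the statement. The resolution is that the entire twisted homology of $X'_{9\num c}$ is carried by the connecting map to the closed boundary stratum $X'_{9\num b}$ of the larger space $U$, which a stratification purely internal to $X'_{9\num c}$ cannot see. (Minor point: the local system $\pm\Q$ here is the sign system on the \emph{five} singular points $\{a,b,c,d,e\}$, not four --- cf.\ the proof of Lemma~\ref{hom9b} --- though since $e$ is fixed by all relevant involutions this does not affect your sign bookkeeping.)
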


\begin{lem}\label{hom9d}
For configurations of type $9\num d$ one has:
\begin{equation*}
\BM[\pm\Q]\pu {X'_{9\num d}}=
\BM \pu{\Phi'_{9\num d}}=
\BM \pu{F'_{9\num d}}=0.
\end{equation*}
\end{lem}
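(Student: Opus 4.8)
The plan is to reduce everything to the twisted Borel--Moore homology of the base configuration space $X'_{9\num d}$, since $\Phi'_{9\num d}$ is a non-orientable $\op\Delta_4$-bundle over $X'_{9\num d}$ and $F'_{9\num d}$ is a $\C$-bundle over $\Phi'_{9\num d}$; by the Leray--Hirsch / Gysin argument already used repeatedly in Sections~\ref{rigid}--\ref{fourptsonconic}, the vanishing of $\BM[\pm\Q]\pu{X'_{9\num d}}$ forces the vanishing of $\BM\pu{\Phi'_{9\num d}}$ and $\BM\pu{F'_{9\num d}}$ (up to a shift and a Tate twist). So the whole content of the lemma is that the twisted Borel--Moore homology of $X'_{9\num d}$ is zero.

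First I would describe $X'_{9\num d}$ explicitly. A configuration of type $9\num d$ is determined by four points $a,b,c,d\notin t$ in general position, together with the choice that the fifth point $e$ equals $p$ or equals $q$ and lies on $ab\cap cd$, subject to the tangency condition: $t=pq$ is tangent to the conic through $a,b,c,d$ and the remaining one of $p,q$. The pair $\{a,b\}$ and the pair $\{c,d\}$ play symmetric roles, so there is a free $\s_2$ swapping $\{a,b\}\leftrightarrow\{c,d\}$; within each pair the two points are unordered; and there is an $\s_2$ swapping the roles of $p$ and $q$ (equivalently, which of the two is the node $e$). I would fix a reference point, say set $e=p$, and project $X'_{9\num d}$ to the datum $(\{a,b\},\{c,d\})$ of two pairs of distinct points spanning two distinct lines through $p$, i.e. essentially to a configuration space of two points on the pencil of lines through $p$ with one marked point on each line, which is affine over a $\B2{\Pp1}$ of line-directions. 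The fibre over such a datum is the set of admissible positions of $q$: the line $t=pq$ must be tangent to the (unique) conic $C_0$ through $a,b,c,d$, which is a conic in $\Pp2$, so the tangent lines to $C_0$ form a conic $C_0\duale$ in $\Pp2\duale$, and those through $p$ are a finite set (generically two points); the point $q$ then ranges over an affine line $t\setminus\{p\}$ for each such $t$.

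The key step is then to exhibit, on $X'_{9\num d}$, an involution that (i) is covered by the sign-change of the local system $\pm\Q$, i.e. it induces an \emph{odd} permutation of the points of the configuration, and (ii) acts on the Borel--Moore homology of $X'_{9\num d}$ trivially after forgetting the twist, forcing $\BM[\pm\Q]\pu{X'_{9\num d}}=0$ because the $(-1)$-eigenspace is simultaneously everything and zero. The natural candidate is the involution swapping $a\leftrightarrow b$ (fixing $c,d,p,q$): it is a transposition, hence odd, so it acts by $-1$ on the stalks of $\pm\Q$. To see it acts trivially on the constant-coefficient $\BM\pu{X'_{9\num d}}$, I would argue fibrewise over the direction-of-the-line-$ab$ part of the base: swapping $a$ and $b$ fixes the line $ab$, fixes $p$, fixes everything about the $\{c,d\}$ side and about $q$, and on the two-point ordered configuration $(a,b)$ on the punctured line $ab\cong\C^*\cong\Pp1\setminus\{p,\infty\}$ it is the transposition. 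Just as in the computations for $X'_{7\num a}$, $X'_{9\num a}$ and $Y'_{7\num d}$, where twisted Borel--Moore homology of $\B 2{\C}$ (equivalently of an ordered pair on $\C^*$ with the swap) vanishes while the untwisted part is invariant, the swap $a\leftrightarrow b$ acts as $+1$ on $\BM\pu{\F2{\C^*}}$ (which is concentrated in the $\s_2$-invariant part). Combining with the invariance of $\BM\pu{\PGL(3)}$ under any permutation of marked points — which handles the fact that the total space is, up to the tangency condition, a $\PGL(3)$-torsor over a rigid configuration — gives that the swap acts trivially on $\BM\pu{X'_{9\num d}}$ with constant coefficients. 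Since $\BM[\pm\Q]\pu{X'_{9\num d}}$ is the $(-1)$-eigenspace of this same involution on $\BM\pu{X'_{9\num d}}$ (the quotient map argument, as in the proof of Lemma~\ref{Y'_7d}: $\BM\pu{Z}=\BM\pu{Z/\iota}\oplus\BM[\pm\Q]\pu{Z/\iota}$), it must vanish.

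The main obstacle I anticipate is making the fibration picture genuinely locally trivial and checking that the local system $\pm\Q$ restricts correctly on the fibres — i.e. verifying that the tangency condition (the position of $q$) does not interact with the $a\leftrightarrow b$ swap in a way that breaks equivariance, and that the various finite quotients (unordered pairs, the $p\leftrightarrow q$ symmetry) are compatible with the splitting of Borel--Moore homology into $\pm$-eigenspaces. A subtlety to watch is whether the locus where the two tangent lines from $p$ to $C_0$ collide (so $p$ lies on $C_0\duale$'s discriminant, i.e. $p\in C_0$) meets $X'_{9\num d}$; since $a,b,c,d\notin t$ but nothing forbids $p\in C_0$, one may need to treat that sublocus separately, but it is again a $\PGL(3)$-torsor over a rigid base with the same swap, so the same eigenspace argument applies. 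Once local triviality and the equivariance of $\pm\Q$ under the chosen transposition are in hand, the vanishing is immediate.
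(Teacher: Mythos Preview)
Your overall strategy is the same as the paper's: reduce to $\BM[\pm\Q]\pu{X'_{9\num d}}=0$ via the bundle structure of $\Phi'_{9\num d}$ and $F'_{9\num d}$, then exhibit an odd transposition (the swap $a\leftrightarrow b$, i.e.\ $(1,2)$ on the five singular points) that acts trivially on the Borel--Moore homology of a suitable cover, forcing the alternating part to vanish. That is exactly what the paper does.

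However, your explicit description of $X'_{9\num d}$ contains two genuine errors that would derail the argument as written. First, four points $a,b,c,d$ in general position determine a \emph{pencil} of conics, not a unique conic $C_0$; so there is no well-defined dual conic $C_0\duale$ to intersect with the pencil of lines through $p$. Second, once this is fixed, the point $q$ is \emph{not} free on $t\setminus\{p\}$: for a given line $\tau$ through $p$ (other than $ab$ and $cd$), there are exactly two conics in the pencil tangent to $\tau$, namely the degenerate conic $ab\cup cd$ (tangent at $p$) and one further conic $Q_\tau$; the second bitangency point $q$ is forced to be the tangency point of $Q_\tau$ with $\tau$. So the free parameter is the \emph{direction} $\tau$, not the position of $q$.

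This is precisely how the paper organizes the computation: it introduces
\[
Y_{9\num d}=\{(a_1,a_2,a_3,a_4,\tau)\in\tF4{\Pp2}\times\Pp2\duale:\ \tau\ni a_1a_2\cap a_3a_4,\ \tau\neq a_1a_2,\ \tau\neq a_3a_4\},
\]
which is a $\C^*$-bundle over $\tF4{\Pp2}\cong\PGL(3)$, and maps it onto $X'_{9\num d}$ by sending $(a_1,\dots,a_4,\tau)$ to the pair $(\{\alpha,\beta\},\{a_1,\dots,a_4,\alpha\})$ with $\alpha=a_1a_2\cap a_3a_4$ and $\beta$ the tangency point of $Q_\tau$. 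The transposition $(1,2)$ fixes each fibre of $Y_{9\num d}\to\tF4{\Pp2}$ pointwise (it fixes the line $a_1a_2$, hence the intersection point, hence the set of admissible $\tau$) and acts trivially on $\BM\pu{\PGL(3)}$; thus it acts trivially on $\BM\pu{Y_{9\num d}}$ and the alternating part---which contains $\BM[\pm\Q]\pu{X'_{9\num d}}$---vanishes. With this correction your fibrewise claim becomes immediate, and the subtlety you flag about ``two tangent lines colliding'' disappears: the degenerate tangent is always $ab\cup cd$, and the other one is parametrized by $\tau\in\C^*$.
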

\begin{lem}\label{hom9e}
For configurations of type $9\num e$ one has:
\begin{equation*}
\begin{array}{l}
\wp(\BM[\pm\Q]\pu {X'_{9\num e}})=(t^4\Ll^{-2}+t)\cdot\wp(\BM{\pu}{\PGL(3)})\\
\wp(\BM \pu{\Phi'_{9\num e}})=(t^8\Ll^{-2}+t^5)\cdot\wp(\BM{\pu}{\PGL(3)})\\
\wp(\BM \pu{F'_{9\num e}})=(t^{10}\Ll^{-3}+t^7\Ll^{-1})\cdot\wp(\BM{\pu}{\PGL(3)})\\
\end{array}
\end{equation*}
\end{lem}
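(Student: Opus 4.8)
The plan is to analyze the configuration space $X'_{9\num e}$ directly, since by the general structure described in Table~\ref{rigid-9&10} the stratum $\Phi'_{9\num e}$ is a non-orientable $\op\Delta_4$-bundle over $X'_{9\num e}$ and $F'_{9\num e}$ is a $\C$-bundle over $\Phi'_{9\num e}$; thus once the twisted Borel--Moore homology of $X'_{9\num e}$ is known, the other two formulas follow from the shift $\BM\pu{\Phi'_{9\num e}}\cong\BM{\pu-4}[\pm\Q]{X'_{9\num e}}$ coming from the open simplices in the fibres (degree shift by $4$) and the further shift and Tate twist $\BM\pu{F'_{9\num e}}\cong\BM{\pu-2}{\Phi'_{9\num e}}\otimes\Q(1)$ coming from the $\C$-bundle. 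So the whole proof reduces to establishing $\wp(\BM[\pm\Q]\pu{X'_{9\num e}})=(t^4\Ll^{-2}+t)\cdot\wp(\BM\pu{\PGL(3)})$.

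To compute this, I would first unwind the definition: a point of $X'_{9\num e}$ records the bitangent $\tau=pq$ with $p,q\in t$, together with four points $a,b,c,d\notin t$ (well, with $a,b\in t$ in the notation of the table — let me instead follow the table: $a,b\in t$, $\{a,b\}\neq\{p,q\}$, and $e=ac\cap bd$). So the combinatorial data is: the line $t\in\Pp2\duale$; an unordered pair $\{p,q\}\subset t$; another unordered pair $\{a,b\}\subset t$ distinct from $\{p,q\}$; and two points $c,d\notin t$; the fifth point $e$ is determined. The twisted local system $\pm\Q$ is with respect to the $\s_5$-action on the five singular points $\{a,b,c,d,e\}$, but since $e$ is rigidly determined and the relevant symmetry compatible with the configuration is the one generated by $(a\,c)(b\,d)$ (swapping the two lines $ac,bd$) together with the independent swap of $\{p,q\}$ coming from the $\s_2$ of the bitangent, I would carefully identify which permutations actually act and with what sign. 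The plan is to realize $X'_{9\num e}$ as the total space of a fibration, stripping off the $\PGL(3)$-factor: fixing a ``frame'' configuration (say $t$, the points $p,q,a,b$ on it in general position, which is a $\PGL(3)$-torsor modulo the residual finite stabilizer), the remaining freedom is the choice of $c,d$ subject to the genericity constraints (no three of $a,b,c,d$ collinear, $c,d\notin t$, $ac\cap bd\notin t$, etc.). I expect this residual space to have twisted Borel--Moore homology with HG polynomial $t^4\Ll^{-2}+t$ — two ``pieces'' corresponding to a top-dimensional $4$-dimensional stratum and a lower $1$-dimensional correction coming from degenerate sub-configurations, computed by an excision/long-exact-sequence argument removing the bad loci (where the collinearity or incidence conditions degenerate). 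Throughout I would use Lemma~\ref{lem1} to kill contributions of configuration spaces of too many points on a line or in affine space.

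The key steps, in order, are: (1) give the explicit parametrization of $X'_{9\num e}$ and identify the relevant finite symmetry group and the sign character restricting to $\pm\Q$; (2) exhibit the $\PGL(3)$-action and show $X'_{9\num e}$ fibres over a quotient $Y$ with fibre $\PGL(3)$ (or rather $\tF4{\Pp2}\cong\PGL(3)$), so that by the $\s$-invariance of $\BM\pu{\PGL(3)}$ — the whole Borel--Moore homology of $\PGL(3)$ being $\s_n$-invariant, as used already in the proof of Lemma~\ref{hom7c} — the twisted homology of $X'_{9\num e}$ is $\BM[\pm\Q]\pu Y\otimes\BM\pu{\PGL(3)}$; (3) compute $\BM[\pm\Q]\pu Y$ by stratifying $Y$ according to the position of $c,d$ relative to the lines $t$, $pa$, etc., using excision long exact sequences and Lemma~\ref{lem1}, arriving at the HG polynomial $t^4\Ll^{-2}+t$; (4) deduce the formulas for $\Phi'_{9\num e}$ and $F'_{9\num e}$ via the simplicial-bundle degree shift by $4$ and the $\C$-bundle shift by $2$ with Tate twist $\Q(1)$.

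The main obstacle I anticipate is step~(3): correctly keeping track of the twisted (sign-local-system) Borel--Moore homology through the stratification. The subtlety is that the genericity conditions defining $X'_{9\num e}$ (no three of the four points collinear, $e\notin t$, the two auxiliary lines being genuinely distinct) carve out the space from a simpler ambient configuration space in a way that interacts nontrivially with the $\s$-action, so the vanishing lemmas must be applied equivariantly, and the non-vanishing contributions ($t^4\Ll^{-2}$ and $t$) have to be matched to specific strata with the right sign behaviour — in particular verifying that the alternating part does not vanish entirely, which is what distinguishes $9\num e$ from the cases $9\num c$, $9\num d$ where large parts do vanish. Getting the two surviving classes, and no spurious third one, is the delicate bookkeeping; the rest is formal bundle-theoretic shifting.
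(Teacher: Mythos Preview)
Your overall strategy matches the paper's --- factor out a $\PGL(3)$, compute the residual fibre, then shift by the $\op\Delta_4$- and $\C$-bundle structures --- but your proposed parametrization has a concrete error that would stall step~(2). You take as ``frame'' the data $(t,p,q,a,b)$, claiming this is a $\PGL(3)$-torsor; but $p,q,a,b$ all lie on the line $t$, so they are collinear and do \emph{not} rigidify $\PGL(3)$ (the stabilizer of a line together with four marked points on it is still the $3$-dimensional group of automorphisms fixing $t$ pointwise). Consequently your residual ``choice of $c,d$'' is not the right slice, and the anticipated complicated stratification in step~(3) is an artifact of this mis-parametrization.

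The paper's fix is to take the four points in general position, namely $(a,b,c,d)$ (the $a_1,a_2,a_3,a_4$ in the proof), as the ordered frame; then $\tF4{\Pp2}\cong\PGL(3)$ genuinely. The remaining datum is the unordered pair $\{p,q\}=\{\alpha,\beta\}$ on the line $t=ab$, subject only to $\{\alpha,\beta\}\neq\{a,b\}$. So the covering space $Y_{9\num e}$ is simply $(\B2{\Pp1}\setminus\{\text{pt}\})\times\PGL(3)$, and the HG polynomial $t^4\Ll^{-2}+t$ drops out of a single excision (remove a point from $\B2{\Pp1}$, whose Borel--Moore homology is $\Q(2)$ in degree~$4$). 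No stratification by degenerate loci is needed. A minor correction: the involution realizing $X'_{9\num e}$ as a $\Z/2$-quotient of $Y_{9\num e}$ is $(a\,b)(c\,d)$ (swap the two points on $t$ and simultaneously the two points off $t$), not $(a\,c)(b\,d)$ as you wrote --- your permutation would send a point on $t$ to one off $t$, which is not a symmetry. Both are even permutations of the five singular points, so your sign conclusion survives, and indeed the twisted Borel--Moore homology of $X'_{9\num e}$ is just the $i$-invariants of $\BM\pu{Y_{9\num e}}$, which is all of it since both factors are $i$-invariant. The formulas for $\Phi'_{9\num e}$ and $F'_{9\num e}$ then follow exactly as you say.
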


\begin{lem}\label{union9}
\begin{enumerate}
\item\label{union9-1}
The Borel--Moore homology of the union of the configurations spaces $X'_{9\num b}$ and $X'_{9\num c}$ inside $\B2{\Pp2}\times\B6{\Pp2}$ has trivial twisted Borel--Moore homology. 
\item\label{union9-2}
The Borel--Moore homology of the union of the strata $\Phi'_{9\num b}$ and $\Phi'_{9\num c}$ inside $\ba\Lambda'\ba$ is trivial.
\end{enumerate}
\end{lem}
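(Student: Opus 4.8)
The plan is to deduce part~\eqref{union9-2} from part~\eqref{union9-1} and to prove part~\eqref{union9-1} by a single long exact sequence whose connecting maps are pinned down by $\PGL(3)$-equivariance. For the first reduction: by construction $\Phi'_{9\num b}\cup\Phi'_{9\num c}$ is a bundle in open $4$-simplices over $Z:=X'_{9\num b}\cup X'_{9\num c}$, the fibre over a configuration being the open simplex spanned by its five singular points; this bundle is orientable over $X'_{9\num b}$ and non-orientable over $X'_{9\num c}$ (cf. Table~\ref{rigid-9&10}), so its coefficient system is the twisted system $\pm\Q$ on $Z$. Since $\BM\pu{\op\Delta_4}$ is one-dimensional, concentrated in degree $4$ and with trivial Hodge structure, this gives $\BM\pu{\Phi'_{9\num b}\cup\Phi'_{9\num c}}\cong\BM[\pm\Q]{\pu-4}{Z}$, so part~\eqref{union9-1} implies part~\eqref{union9-2}.

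To prove part~\eqref{union9-1}, I would first check that $X'_{9\num b}$ is a closed subset of $Z$ with open complement $X'_{9\num c}$. The two strata are disjoint, since the set of singular points of a $9\num c$-configuration never contains $\{p,q\}$ while that of a $9\num b$-configuration always does. Moreover every $9\num b$-configuration, say with singular points $\{p,q,a,b,c\}$ and $c=pa\cap qb$, is a limit of $9\num c$-configurations: taking $a_1,a_2$ on the line $pa$ and $a_3,a_4$ on the line $qb$ with $a_1\to p$, $a_2\to a$, $a_3\to q$, $a_4\to b$, and $e=pa\cap qb$, the configurations with singular points $\{a_1,a_2,a_3,a_4,e\}$ and the same bitangent $\{p,q\}$ are of type $9\num c$ and converge to the given one; hence $X'_{9\num b}\subseteq\overline{X'_{9\num c}}$. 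This produces a long exact sequence in twisted Borel--Moore homology
\begin{equation*}
\cdots\longrightarrow\BM[\pm\Q]{k}{X'_{9\num b}}\longrightarrow\BM[\pm\Q]{k}{Z}\longrightarrow\BM[\pm\Q]{k}{X'_{9\num c}}\xrightarrow{\ \delta_k\ }\BM[\pm\Q]{k-1}{X'_{9\num b}}\longrightarrow\cdots,
\end{equation*}
and by Lemmas~\ref{hom9b} and~\ref{hom9c} the source and the target of $\delta_k$ are both isomorphic, as Hodge structures, to $\BM{k-1}{\PGL(3)}$, hence one-dimensional, so $\delta_k$ is either zero or an isomorphism. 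Since $9\num b$ and $9\num c$ are rigid, $\PGL(3)$ acts on $X'_{9\num b}$, $X'_{9\num c}$ and $Z$ with finite stabilisers and all the groups in the sequence are tensor products of $\BM\pu{\PGL(3)}$; the $\PGL(3)$-equivariant maps $\delta_k$ must respect this structure, so they are all multiplication by one and the same scalar, and it suffices to show that scalar is non-zero.

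This last point is the main obstacle; I would handle it as in the treatment of $\delta_{25}$ in the proof of Lemma~\ref{hom13}. If all $\delta_k$ vanished, then $\wp\bigl(\BM[\pm\Q]\pu{Z}\bigr)=(1+t)\,\wp\bigl(\BM\pu{\PGL(3)}\bigr)$, and the simplex bundle above would give $\Phi'_{9\num b}\cup\Phi'_{9\num c}$ a non-trivial Borel--Moore homology; propagating the corresponding class through the open cone $F'_{13}$, the spectral sequence converging to $\BM\pu{\mathcal D^-_0}$ and the duality isomorphisms~\eqref{incld0}, \eqref{BMvscoh} exactly as in Lemma~\ref{hom13} would yield a non-zero class in some $\coh{j}{\imin_0}$ of Hodge weight strictly larger than $2j$, which is impossible. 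Alternatively, as in the remark after Lemma~\ref{hom13}, one can prove $\delta$ non-zero directly, by exhibiting along the degeneration $a_1\to p$, $a_3\to q$ an explicit relative cycle in $X'_{9\num c}$ whose boundary generates $\BM[\pm\Q]\pu{X'_{9\num b}}$. The delicate parts of the argument are this non-vanishing --- in the spectral-sequence version, making sure the phantom class is not annihilated before it reaches $\coh\pu{\imin_0}$ --- and, to a lesser extent, the verification that $X'_{9\num b}$ is closed in $Z$.
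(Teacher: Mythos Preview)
Your reduction of part~\eqref{union9-2} to part~\eqref{union9-1} via the $\op\Delta_4$-bundle is fine and is exactly what the paper does. The problem is with your proof of part~\eqref{union9-1}: it is circular. You invoke Lemma~\ref{hom9c} to identify $\BM[\pm\Q]\pu{X'_{9\num c}}$ with $\BM{\pu-1}{\PGL(3)}$, but in the paper Lemma~\ref{hom9c} is \emph{deduced from} Lemma~\ref{union9}, via precisely the long exact sequence you write down. So you are assuming the very thing you set out to prove. The same issue infects your proposed weight argument for the non-vanishing of $\delta$: it would run through Lemma~\ref{hom13}, which itself relies on Lemma~\ref{union9} to discard the contribution of $\Phi'_{9\num b}\cup\Phi'_{9\num c}$ to the base of the open cone.

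The paper avoids all of this by computing $\BM[\pm\Q]\pu{Z}$ directly, without any long exact sequence or connecting-map analysis. It passes to the ordered cover
\[
Y=\left\{(\alpha,\beta,a_1,a_2,a_3,a_4)\in \F2{\Pp2}\times\tF4{\Pp2}\;\middle|\;\alpha\in a_1a_2\setminus a_3a_4,\ \beta\in a_3a_4\setminus a_1a_2\right\},
\]
which is a $\C^2$-bundle over $\tF4{\Pp2}\cong\PGL(3)$, and observes that the involution $(a_1,a_2)\mapsto(a_2,a_1)$ acts trivially on $\BM\pu Y$ (it is trivial on both base and fibre). Since this involution is a single transposition of the five singular points, the twisted Borel--Moore homology of $Z=U$ sits in the alternating part of $\BM\pu Y$, which is zero. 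This is a one-paragraph argument with no differentials to chase; your route, even if decircularised by supplying an independent proof of $\BM[\pm\Q]\pu{X'_{9\num c}}$, would still leave you with the genuinely delicate task of pinning down $\delta$.
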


\begin{proof}[Proof of~Lemma~\ref{hom9b}]
Recall that the configuration space $X'_{9\num b}$ is a finite quotient of the space $\tF4{\Pp2}$ of ordered configurations of points in general position, via the map 
$$\begin{array}{ccc}
\tF4{\Pp2}&\longrightarrow&X'_{9\num b}\\
(a_1,a_2,a_3,a_4)&\longmapsto&(\{a_1,a_2\},\{a_1,a_2,a_3,a_4,a_5\}),
\end{array}
$$
where the point $a_5$ is the intersection point of the lines $a_1a_3$ and $a_2a_4$.
This map can be identified with the quotient map of $\tF4{\Pp2}$ by the involution $(1,2)(3,4)$. Note that the involution $(1,2)(3,4)$ has even sign, so that the restriction to $X'_{9\num b}$ of the local system $\pm\Q$ (defined by the sign representation of the action of $\s_5$ on the $5$ singular points) equals the constant local system $\Q$. 
Then the claim follows from the fact that $\tF4{\Pp2}$ is isomorphic to $\PGL(3)$ and that the whole of its Borel--Moore homology is invariant under permutation of the points.

The result over the Borel--Moore homology of $\Phi'_{9\num b}$ follows from the fact that $\Phi'_{9\num b}$ is a $\op\Delta_4$-bundle over $X'_{9\num b}$. Note that the involution $(1,2)(3,4)$ does not change the orientation of the simplex with vertices $a_1,a_2,a_3,a_4,a_5$, so that the simplicial bundle $\Phi'_{9\num b}\rightarrow X'_{9\num b}$ is orientable in this case.
The result over the Borel--Moore homology of $F'_{9\num b}$ follows from the fact that $F'_{9\num b}\rightarrow \Phi'_{9\num b}$ is a complex vector bundle of rank $1$.
\end{proof}

\begin{proof}[Proof of Lemma~\ref{union9}]
The union $U$ of the configuration spaces $X'_{9\num b}$ and $X'_{9\num c}$ inside $\B2{\Pp2}\times\B5{\Pp2}$ is the locus of configurations $(\{\alpha,\beta\},\{a_i\}_{1\leq i\leq 5})$ such that $a_1,a_2,a_3,a_4$ are in general linear position, the point $\alpha$ belongs to $a_1a_2$, the point $\beta$ belongs to $a_3a_4$ and furthermore $a_1a_2\cap a_3a_4 = \{a_5\}\not\subset \alpha\beta$.

Consider the configuration space
$$
Y:=\left\{(\alpha,\beta,a_1,a_2,a_3,a_4)\in \F2{\Pp2}\times\tF4{\Pp2}
\left| 
\begin{array}{c}
\alpha\in a_1a_2, \beta\in a_3a_4,\\
 \alpha,\beta\not\in(a_1a_2\cap a_3a_4)
\end{array}
\right.
\right\}
$$
of ordered configurations of six points, such that the last four points $a_1,a_2,a_3,a_4$ are in general position, the first point $\alpha$ lies on $a_1a_2\setminus a_3a_4$ and the second point $\beta$ lies on $a_3a_4\setminus a_1a_2$. 
Notice that interchanging the configurations $(\alpha,\beta,a_1,a_2,a_3,a_4)$ and $(\alpha,\beta,a_2,a_1,a_3,a_4)$ gives a well defined involution on $Y$. It is easy to prove that the whole Borel--Moore homology of $Y$ is invariant with respect to this involution. Namely, the space $Y$ is fibred over $\tF4{\Pp2}\cong\PGL(3)$, whose Borel--Moore homology is invariant under the involution interchanging $a_1$ and $a_2$. 
The involution interchanging $a_1$ and $a_2$ induces a trivial action also on the Borel--Moore homology of the fibre of $Y\rightarrow\tF4{\Pp2}$, which is isomorphic to $\C^2\cong (a_1a_2\setminus\{\text{pt}\})\times (a_3a_4\setminus\{\text{pt}\})$.

The space  $U$ is the quotient of $Y$ by the group generated by the involutions
$$\begin{array}{c}
(\alpha,\beta,a_1,a_2,a_3,a_4)\leftrightarrow(\alpha,\beta,a_2,a_1,a_3,a_4),
\\ 
(\alpha,\beta,a_1,a_2,a_3,a_4)\leftrightarrow(\alpha,\beta,a_1,a_2,a_4,a_3),
\\ 
(\alpha,\beta,a_1,a_2,a_3,a_4)\leftrightarrow(\beta,\alpha,a_3,a_4,a_1,a_2).
\end{array}
$$

Thus, the twisted Borel--Moore homology of $U$ is contained in the part of the Borel--Moore homology of $Y$ which is alternating under the involution interchanging $a_1$ and $a_2$ in the configurations. This proves that the twisted Borel--Moore homology of $U$ is trivial.

The second part of the claim follows from the fact that $\Phi'_{9\num b}\cup\Phi'_{9\num c}$ is a $\op\Delta_4$-bundle over $U$.
\end{proof}

\begin{proof}[Proof of~Lemma~\ref{hom9c}]
We start by observing that the result on the twisted Borel--Moore homology of $X'_{9\num c}$ implies the result for $\BM\pu{\Phi'_{9\num c}}$ and $\BM\pu{F'_{9\num c}}$. This follows from the structure of $\Phi'_{9\num c}$ as $\op\Delta_4$-bundle over $X'_{9\num c}$ and from the structure of $F'_{9\num c}$ as rank $1$ complex vector bundle over $\Phi'_{9\num c}$. 

The configurations space $X'_{9\num c}$ is an open subset of the space $U$ of Lemma~\ref{union9}, with complement the configuration space $X'_{9\num b}$. The twisted Borel--Moore homology of $U$ is trivial, whereas the twisted Borel--Moore homology of $X'_{9\num b}$ is isomorphic to the Borel--Moore homology of $\PGL(3)$ by Lemma~\ref{hom9b}.
Then the claim follows from the long exact sequence in Borel--Moore homology with $\pm\Q$-coefficients associated to the closed inclusion $X'_{9\num b}\hookrightarrow U$.
\end{proof}

\begin{proof}[Proof of~Lemma~\ref{hom9d}]
Consider the space
$$Y_{9\num d}:=\{(a_1,a_2,a_3,a_4,\tau)\in\tF4{\Pp2}\times{\Pp2}\duale| \tau\cap a_1a_2 = \tau\cap a_3a_4= a_1a_2\cap a_3a_4\}$$
of ordered configurations of four points $a_1,a_2,a_3,a_4$ in general position, together with a line $\tau$ passing through the common point of the lines $a_1a_2$ and $a_3a_4$, and different from these two lines. The natural map $Y_{9\num d}\rightarrow\tF4{\Pp2}$ gives $Y_{9\num d}$ the structure of a $\C^*$-bundle over $\tF4{\Pp2}$. Note that the whole Borel--Moore homology of $Y_{9\num d}$ is invariant under the interchange of the points $a_1, a_2$. This involution fixes the fibres of the $\C^*$-bundle (considered as a subset of $\Pp2\duale$), and it also acts trivially on the Borel--Moore homology of the basis $\tF4{\Pp2}\cong\PGL(3)$. 

Next, fix a configuration $\underline y=(a_1,a_2,a_3,a_4,\tau)\in Y_{9\num d}$ and consider the pencil of quadrics through the points $a_1,a_2,a_3,a_4$. Every quadric in the pencil intersects the line $\tau$ in a subscheme of length $2$, and exactly two quadrics in the pencil are tangent to $\tau$, namely, the reduced conic $a_1a_2\cup a_3a_4$ and a further conic, which we will denote by $Q_{\underline{y}}$. Note that the tangency points of the two conics are distinct points on $\tau$, otherwise we would get a contradiction with the assumption that the $a_i$ are in general position.

Consider next the map 

$$\begin{array}{ccc}Y_{9\num d}&\longrightarrow&X'_{9\num d}\\
\underline{y}=(a_1,a_2,a_3,a_4,\tau)&\longmapsto&(\{\alpha,\beta\},\{a_1,a_2,a_3,a_4,\alpha\})
\end{array}$$ 
where $\{\alpha\}=a_1a_2\cap a_3a_4$ and $\beta$ is the intersection point of $Q_{\underline{y}}$ and $\tau$. This map is surjective with finite fibres, and allows to identify $X'_{9\num b}$ with the quotient of $Y_{9\num d}$ by the subgroup of $\s_4$ generated by $(1,2)$, $(3,4)$ and $(1,3)(2,4)$. We are interested in the local system of coefficients of $X'_{9\num d}$ induced by the sign representation on the $5$ singular points. Since the involution $(1,2)$ interchanges exactly $2$ singular points, the twisted Borel--Moore homology of $X'_{9\num d}$ is contained in the part of the Borel--Moore homology of $Y_{9\num d}$ which is alternating under $(1,2)$. Therefore, as the whole Borel--Moore homology of $Y_{9\num d}$ is invariant, the twisted Borel--Moore homology of $X'_{9\num d}$ must vanish. Furthermore, the structure of $\Phi'_{9\num d}$ and $F'_{9\num d}$ as fibrations over $X'_{9\num d}$ also implies that the Borel--Moore homology of these spaces vanishes.
\end{proof}

\begin{proof}[Proof of Lemma~\ref{hom9e}]
Let us consider the configuration space
$$
Y_{9\num e}:=\left\{(\{\alpha,\beta\},a_1,a_2,a_3,a_4)\in\B2{\Pp2}\times\tF4{\Pp2}\left|
\begin{array}{c}\dim\la \alpha,\beta,a_1,a_2\ra = 1,\\ \{\alpha,\beta\}\neq\{a_1,a_2\}\end{array}
\right.
\right\}.
$$

Let us choose a standard frame $(e_1,e_2,e_3,e_4)\in\tF4{\Pp2}$ and identify $\Pp1$ with the line $e_1e_2$. Then $Y_{9\num e}$ is isomorphic to the product $(\B2{\Pp1}\setminus\{\text{pt}\})\times \PGL(3)$ by the map sending $(\{\alpha,\beta\},a_1,a_2,a_3,a_4)$ to $(\{\phi(\alpha),\phi(\beta)\},\phi)$ where $\phi$ is the unique automorphism of $\Pp2$ such that $\phi(a_i)=e_i$ for all $i=1,\dots,4$. In particular, this implies that the  Borel--Moore homology of $Y_{9\num e}$ with constant coefficients has HG polynomial $t^4\Ll^{-2}-t$.

The map $Y_{9\num e}\rightarrow X'_{9\num e}$ given by $(\{\alpha,\beta\},a_1,a_2,a_3,a_4)\mapsto(\{\alpha,\beta\},\{a_1,a_2,a_3,a_4,a_5\})$ with $a_5$ the intersection point of the lines $a_1a_3$ and $a_2a_4$ allows to identify $X'_{9\num e}$ with the quotient of $Y_{9\num e}$ by the involution 
$$i\co (\{\alpha,\beta\},a_1,a_2,a_3,a_4)\longmapsto(\{\alpha,\beta\},a_2,a_1,a_4,a_3).$$

Since $i$ interchanges two pairs of singular points, the twisted Borel--Moore homology of $X'_{9\num e}$ coincides with the part of the Borel--Moore homology which is invariant under $i$. Then the claim follows from the fact that the Borel--Moore homology of both factors $\PGL(3)$ and $\B2{\Pp1}\setminus\{\text{pt}\}$  of $Y_{9\num e}$ is invariant under the $\s_2$-action induced by  $i$.
\end{proof}

\section{Configurations of type $10$}\label{sixpts}

In this section, we deal with the singular sets of singular quartics with a marked bitangent that are the union of four lines in general position. Such a quartic has $6$ distinct singular points, hence  a singular set of the configuration space $X'_{10}$ will be an element of $\B2{\Pp2}\times\B6{\Pp2}$. As always, we will denote by $\pm\Q$ the pull-back of the local system $\pm\Q$ under the forgetful map $X'_{10}\rightarrow\B6{\Pp2}$.

\begin{lem}\label{hom10}
\begin{equation}\label{eq10}
\begin{array}{l}
\wp(\BM[\pm\Q]\pu {X'_{10}})=(\Ll^{-2}t^4+1)\cdot\wp(\BM\pu{\PGL(3)}),\\
\wp(\BM \pu{\Phi'_{10}})=t^5(\Ll^{-2}t^4+1)\cdot\wp(\BM\pu{\PGL(3)}),\\
\wp(\BM \pu{F'_{10}})=t^7\Ll^{-1}(\Ll^{-2}t^4+1)\cdot\wp(\BM\pu{\PGL(3)}).
\end{array}
\end{equation}
\end{lem}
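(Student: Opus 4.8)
The plan is to describe the configuration space $X'_{10}$ very explicitly and to reduce everything to the Borel--Moore homology of $\PGL(3)$, using that the only quartic whose singular locus is the set of six vertices of a complete quadrilateral $\ell_1\cup\ell_2\cup\ell_3\cup\ell_4$ is $\ell_1\ell_2\ell_3\ell_4$ itself, together with the $\s_4$-invariance of $\BM\pu{\tF4{\Pp2}}=\BM\pu{\PGL(3)}$ recalled in the proof of Lemma~\ref{hom7c}. First I would check that a line $\tau$ is a bitangent of $\ell_1\ell_2\ell_3\ell_4$ (in the sense relevant to $\mathcal D^-_0$, i.e.\ either contained in the quartic or cutting out on it a divisor of the form $2\alpha+2\beta$) in exactly two ways: either $\tau$ is one of the four lines $\ell_i$, or $\tau$ is one of the three diagonals, i.e.\ $\{\alpha,\beta\}$ is a pair of opposite vertices. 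These two possibilities are preserved under small deformations inside the stratum, so $X'_{10}$ is the \emph{disjoint} union of two closed-and-open pieces: the piece $X'_{10\num a}$ of opposite-vertex pairs and the piece $X'_{10\num b}$ of pairs lying on one of the four lines (here I must keep the degenerate configurations in which $\alpha$ or $\beta$ is itself a vertex, so that the line-case fibre is the whole of $\B2{\Pp1}$). Hence $\BM[\pm\Q]\pu{X'_{10}}=\BM[\pm\Q]\pu{X'_{10\num a}}\oplus\BM[\pm\Q]\pu{X'_{10\num b}}$.

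The key observation for both pieces is that the symmetric group permuting the four lines of the complete quadrilateral acts on the six vertices through \emph{even} permutations: a transposition, a $3$-cycle, a $4$-cycle, and a product of two disjoint transpositions induce in $\s_6$ respectively a product of two transpositions, a product of two $3$-cycles, a $4$-cycle times a transposition, and a product of two transpositions, all of sign $+1$. Therefore the twisted local system $\pm\Q$ restricts to a trivial system on $X'_{10}$, and in particular the simplicial bundle $\Phi'_{10}\to X'_{10}$ is orientable, so $\BM[\pm\Q]\pu{X'_{10}}=\BM\pu{X'_{10}}$. For the diagonal piece, recovering the four lines from $S$ and the pairing from $\{\alpha,\beta\}$ identifies $X'_{10\num a}$ with the quotient of $\tF4{\Pp2\duale}\times\{\text{the three pairings}\}$ by the natural $\s_4$-action, i.e.\ with the free quotient of $\tF4{\Pp2\duale}\cong\PGL(3)$ by the order-$8$ subgroup stabilizing a pairing; since the $\s_4$-action on $\BM\pu{\PGL(3)}$ is trivial, $\BM\pu{X'_{10\num a}}=\BM\pu{\PGL(3)}$, contributing the summand $1\cdot\wp(\BM\pu{\PGL(3)})$.

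For the line piece, recovering from $(\{\alpha,\beta\},S)$ the line $\alpha\beta$ together with the other three lines of the quadrilateral exhibits $X'_{10\num b}$ as a locally trivial $\B2{\Pp1}$-bundle over the space of complete quadrilaterals with one marked line, which is the free quotient $\tF4{\Pp2\duale}/\s_3\cong\PGL(3)/\s_3$. Since $\BM\pu{\B2{\Pp1}}=\Q(2)$ is concentrated in degree $4$ (Lemma~\ref{b2p2}), the Leray spectral sequence of this fibration degenerates and $\BM\pu{X'_{10\num b}}=\BM{\pu-4}{\PGL(3)/\s_3}\otimes\Q(2)=\BM{\pu-4}{\PGL(3)}\otimes\Q(2)$, contributing the summand $\Ll^{-2}t^4\cdot\wp(\BM\pu{\PGL(3)})$. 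Adding the two summands gives the first equality in~\eqref{eq10}, and the remaining two follow formally: by Lemma~\ref{ucci} the stratum $\Phi'_{10}$ is an (orientable, as noted above) $\op\Delta_5$-bundle over $X'_{10}$, so $\wp(\BM\pu{\Phi'_{10}})=t^5\,\wp(\BM\pu{X'_{10}})$, and $F'_{10}$ is a rank-$1$ complex vector bundle over $\Phi'_{10}$, so $\wp(\BM\pu{F'_{10}})=t^2\Ll^{-1}\,\wp(\BM\pu{\Phi'_{10}})$.

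The main obstacle I expect is the combinatorial book-keeping for $X'_{10}$: establishing precisely which pairs $\{\alpha,\beta\}$ occur (including the degenerate ones where $\alpha$ or $\beta$ is a vertex), verifying that the diagonal and line pieces form a genuine topological disjoint union rather than meeting along a common degeneration, and checking that the finite-group actions involved are free so that the quotients are smooth and have the claimed Borel--Moore homology. The sign computation of the second paragraph and the degeneration of the $\B2{\Pp1}$-bundle spectral sequence are then routine.
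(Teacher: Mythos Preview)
Your proposal is correct and follows essentially the same approach as the paper: you split $X'_{10}$ into the ``line'' piece and the ``diagonal'' piece, observe that the $\s_4$-action on the four lines induces only even permutations of the six vertices (so the twisted system is trivial), and identify each piece with a finite free quotient of $\tF4{\Pp2\duale}\cong\PGL(3)$ (times $\B2{\Pp1}$ in the line case), using the $\s_4$-invariance of $\BM\pu{\PGL(3)}$. The only cosmetic difference is that your labels $X'_{10\num a}$ and $X'_{10\num b}$ are swapped relative to the paper's, and the paper takes the $\s_3$-quotient for the line piece after computing the Borel--Moore homology rather than before, but the argument is the same.
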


\proof
We start by observing that it suffices to prove the description of the Borel--Moore homology of the configuration space $X'_{10}$. The results on $\Phi'_{10}$ and $F'_{10}$ will immediately follows from their structures as simplicial bundle, resp., vector bundle.

Recall that the elements of $X'_{10}$ are configurations $(\{\alpha,\beta\},K)$ such that $K$ is the singular set of $\mathcal C=\bigcup_i\ell_i\subset\Pp2$ for a configuration of $4$ lines $\ell_1,\ell_2,\ell_3,\ell_4$ in general position  and the line $\tau=\alpha\beta$ is either tangent to $\mathcal C$ at the points $\alpha$ and $\beta$, or it is contained in $\mathcal C$. This implies that we may view $X'_{10}$ as a subset of $\B2{\Pp2}\times\tB4{\Pp2\duale}$, and that $X'_{10}$ has two irreducible components:
$$X'_{10\num a}=\{(\{\alpha,\beta\},\{\ell_1,\dots,\ell_4\})\in X'_{10}| \alpha\beta\subset\bigcup_i\ell_i\}$$
and
$$
X'_{10\num b}=\{(\{\alpha,\beta\},\{\ell_1,\dots,\ell_4\})\in X'_{10}| \ell_1\cap \ell_2=\{\alpha\}, \ell_3\cap\ell_4=\{\beta\}\},$$
because the only bitangent lines to the singular quartic $\mathcal C$ are the components of $\mathcal C$ and the lines joining the intersection points of two disjoint pairs of components of $\mathcal C$.

We need to compute the Borel--Moore homology of $X'_{10}$ in the twisted local system of coefficients $\pm\Q$. This local system of coefficients coincides with the restriction to $X'_{10}$ of the trivial local system on $\B2{\Pp2}\times\tB4{\Pp2\duale}$ under the inclusion $X'_{10}\hookrightarrow\B2{\Pp2}\times\tB4{\Pp2\duale}$. This follows from the fact that interchanging two lines $\ell_i,\ell_j$ interchanges two pairs of singular points of $\mathcal C$, thus inducing a permutation of even sign in the configuration of six singular points.

We proceed to consider configurations of type $10\num a$. Notice that without loss of generality we can always assume that the marked points $\alpha,\beta$ lie on the line $\ell_4$. In other words, we can obtain $X'_{10\num a}$ as the quotient of the space 
$$
Y_{10\num a}=\{(\{\alpha,\beta\},(\ell_1,\dots,\ell_4))\in\B2{\Pp2}\times\tB4{\Pp2\duale}| \alpha,\beta\in\ell_4\}
$$
by the action of $\s_3$ permuting the lines $\ell_1,\ell_2,\ell_3$.

The space $Y_{10\num a}$ is fibred over $\tF4{\Pp2}\cong\PGL(3)$ with fibre isomorphic to $\B2{\ell_4}\cong\B2{\Pp1}$. The space $\B2{\Pp1}$ is isomorphic to $\Sym^2\Pp1$ with the diagonal removed, i.e. to the complement of a smooth conic in $\Pp2$. Hence we have 
$$\wp(\BM\pu{Y_{10\num a}})=t^4\Ll^{-2}\wp(\BM\pu{\PGL(3)},$$
and since the whole Borel--Moore homology of $Y_{10\num a}$ is invariant under the $\s_3$-action, this yields the Borel--Moore homology of $X'_{10\num a}$ as well.

Analogously, we realize $X'_{10\num b}$ as the quotient of $\tB4{\Pp2\duale}$ the action of the group generated by the involution interchanging $\ell_1\leftrightarrow\ell_2$ and the involution $\ell_1\leftrightarrow\ell_3$, $\ell_2\leftrightarrow\ell_4$. Since the Borel--Moore homology of $\tB4{\Pp2\duale}\cong\PGL(3)$ is invariant under any permutation of the points, the Borel--Moore homology of $X'_{10\num b}$ coincides with that of $\PGL(3)$. Then the claim follows from the fact that the Borel--Moore homology of $X'_{10}$ is the direct sum of the Borel--Moore homology of its two components $X'_{10\num a}$ and $X'_{10\num b}$.
\qed

\section{Quartic curves with a flex bitangent}\label{Qdelta}

In this section, we will compute the rational cohomology of the moduli space $\qmin_\delta $ of pairs $(C,\tau)$ such that $C$ is a smooth quartic curve and $\tau$ a flex bitangent.

\begin{thm}\label{cohqdelta}
The rational cohomology of $\qmin_\delta$ is one-dimensional and concentrated in degree $0$.
\end{thm}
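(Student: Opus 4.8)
The plan is to mimic the strategy of Section~\ref{introQ0}, replacing the fibration $\pmin_0\co\imin_0\to\B2{\Pp2}$ by the fibration $\pmin_\delta\co\imin_\delta\to\Pp{}(T_{\Pp2})$ over the space of flex-tangency data. First I would describe the fibre of $\pmin_\delta$ over a point $(p,\ell)\in\Pp{}(T_{\Pp2})$ (a point $p\in\Pp2$ together with a tangent direction, i.e.\ a line $\ell$ through $p$): it is the complement $V_{p,\ell}\setminus\Sigma$, where $V_{p,\ell}\subset V$ is the linear subspace of quartics $f$ such that $\ell$ either lies in $\van f$ or meets $\van f$ only at $p$ with multiplicity $4$. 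This is again a linear condition, cutting out a subspace of $V$ of some fixed dimension, so the fibre is the complement of a discriminant in an affine space and Vassiliev--Gorinov's method applies. As in Lemma~\ref{division}, the cohomology of $\imin_\delta$ is a tensor product of $\coh\pu{\GL(3)}$ with that of $\qmin_\delta$, so it suffices to show that $\coh\pu{\imin_\delta}$ is itself (as a graded vector space) just $\coh\pu{\GL(3)}$, equivalently that the fibre of $\pmin_\delta$ has the rational cohomology of a point after accounting for the base $\Pp{}(T_{\Pp2})$ and the $\GL(3)$-action.

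Concretely I would set up the auxiliary incidence variety $\mathcal D^-_\delta\subset\mathcal V^-_\delta$ analogous to $\mathcal D^-_0\subset\mathcal V^-_0$: the space $\mathcal V^-_\delta$ is a vector bundle over $\Pp{}(T_{\Pp2})$ of the appropriate rank, $\mathcal D^-_\delta$ is the sub-locus where the quartic is singular, and $\imin_\delta=\mathcal V^-_\delta\setminus\mathcal D^-_\delta$. Then I would run Vassiliev--Gorinov's method on $\mathcal D^-_\delta$ using a refinement of Vassiliev's list (Table~\ref{Vaslist}) of singular configurations, now stratified according to the position of the singular set relative to the flex point $p$ and its tangent line $\ell$. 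The same dichotomy as in Sections~\ref{nonrigid}--\ref{sixpts} applies: configuration types containing rational curves contribute trivially by Lemma~\cite[2.17]{OTM4}, non-rigid finite configurations contribute trivially once they have more than a couple of points by Lemma~\ref{lem1} (and here one expects even the small ones to die, since the flex condition is more constraining than two independent tangency conditions), and rigid configurations contribute tensor products of $\BM\pu{\PGL(3)}$. One then assembles these contributions through the two long exact sequences analogous to \eqref{les-nrig} and \eqref{incld0}, together with the Leray spectral sequence of $\pmin_\delta$, and uses the rigidity constraint (everything must be a tensor product of $\coh\pu{\GL(3)}$) together with weight/purity arguments to pin down all differentials, exactly as in the proof of Theorem~\ref{cohq0}.

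The expected outcome is that, after all cancellations, $\coh\pu{\imin_\delta}\cong\coh\pu{\GL(3)}$, whence $\coh\pu{\qmin_\delta}=\Q$ concentrated in degree $0$ by the isomorphism \eqref{icLHdelta} in Lemma~\ref{division}. A useful sanity check along the way: we already know from the discussion following Theorem~\ref{cohq0} (using \cite{Harer} and the Gysin sequence for $\qmin_\delta\hookrightarrow\qmin$) that $\coh0{\qmin_\delta}=\Q$ must kill the class $\coh1{\qmin_0}=\Q(-1)$, and Theorem~\ref{main} forces $\qmin_\delta$ to have no further cohomology; so the computation has to come out this way, and the real content is verifying it directly via Vassiliev--Gorinov.

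The main obstacle will be the bookkeeping of the refined singular-configuration list for $V_{p,\ell}\cap\Sigma$: the flex-bitangent condition is of higher order than the proper-bitangent condition, so the linear subspaces $L(K)$ attached to a singular set $K$ behave differently, and one must check carefully (as in the type-$8$ analysis of Section~\ref{rigid}) that these subspaces still fit into vector bundles over the relevant configuration spaces so that the strata $F_j$, $\Phi_j$ remain vector/simplicial bundles. A secondary subtlety is the interaction between the $\s_k$-actions (permuting singular points) and the now-asymmetric role of the distinguished point $p$, but since the base $\Pp{}(T_{\Pp2})$ carries no twisted local system in the relevant range this should not introduce genuinely new phenomena beyond those already handled in Sections~\ref{fivepts}--\ref{sixpts}.
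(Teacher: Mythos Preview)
Your strategy matches the paper's: compute the cohomology of the fibre $V_{p,\ell}\setminus\Sigma$ by Vassiliev--Gorinov, feed it into the Leray spectral sequence of $\pmin_\delta$, and use $\coh\pu{\GL(3)}$-divisibility from Lemma~\ref{division} to pin down the differentials. However, your heuristic about which configurations contribute is inverted, and following it literally would lead to an inconsistency: if the small non-rigid configurations all died, the fibre would have the cohomology of a point, so the Leray spectral sequence would give $\coh\pu{\imin_\delta}\cong\coh\pu{\Pp{}(T_{\Pp2})}$, which is concentrated in even degrees and is \emph{not} a tensor product of $\coh\pu{\GL(3)}$.

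What the paper actually finds is the reverse. The small non-rigid types $1$, $2$, $4$ survive and give the fibre HG polynomial $(1-t\Ll)^3$ (Lemma~\ref{cohXpt}), while \emph{all} rigid configurations of types $7$--$12$ vanish. The vanishing comes from a dichotomy special to the flex case: for a configuration $K$ of type $7$--$12$, either the generic quartic in $V_{p,t}$ singular along $K$ does not contain $t$ (``first kind''), in which case the singular locus is forced onto a line through $p$ or a conic tangent to $t$ at $p$, hence onto a rational curve, and the stratum dies by \cite[Lemma~2.17]{OTM4}; or every such quartic contains $t$ (``second kind''), and then only two strata $\Phi_{9\num b}$ and $\Phi_{10'}$ remain, which the paper kills together by a direct simplicial argument (Lemma~\ref{9+10}) rather than by locating a spectral-sequence differential. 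So no analogue of Sections~\ref{fivepts}--\ref{sixpts} is needed and the fibred setup $\mathcal D^-_\delta$ is unnecessary; the flex case is simpler than the proper-bitangent case, not harder.
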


Recall that $\imin_\delta$ is fibred over the space $\Pp{}(T_{\Pp2})$, which can be viewed as the incidence correspondence
$$\{(\alpha,\tau)\in\Pp2\times\Pp2\duale| \alpha\in\tau\}.$$

We start by considering the fibre of the map $\pmin_\delta \co \imin_\delta \rightarrow\Pp{}(T_{\Pp2})$ over $(p,t)\in\Pp{}(T_{\Pp2})$. 
Set $t'=t\setminus\{p\}.$ 
Consider the $11$-dimensional complex vector space 
$$V_{p,t}:=\left\{
f\in V\left| \begin{array}{c} 
\text{ the line t is either contained in $\van f$ or it}\\
\text{or is a flex bitangent to $t$ at the point $p$} \end{array}\right.\right\}.$$

Then the fibre $(\pmin_\delta )^{-1}(p,t)$ is equal to $V_{p,t}\setminus\Sigma$. Hence, the fibre of $\pmin_\delta $ can be viewed as the complement of a discriminant in the vector space $V_{p,t}$. In particular, its cohomology can be computed using Vassiliev--Gorinov's method.

We proceed by giving the classification of the singular sets in $\Pp2$ of quartic curves that pass through $p$ and have the line $t\ni p$ as flex bitangent. These are exactly the singular sets of the elements of $V_{p,t}\cap\Sigma$. 
First, we classify in Table~\ref{flex1-6} the singular sets that come from refining singular configurations of type~$1$--$6$ in Vassiliev's list (Table~\ref{Vaslist}).
\begin{table}
\caption{\label{flex1-6}Singular configurations of type $1$--$6$ and associated strata.}
{\small
\begin{tabular}{l@{\ }p{11cm}}
1a&The point $p$.\\      
&Stratum: $F_{1\num a}$ is isomorphic to $\C^{10}$.\\
1b&One point on $t'$.\\
&Stratum: $F_{1\num b}$ is a $\C^9$-bundle over $t'\cong\C$.\\
1c&A points outside $t$.\\
&Stratum: $F_{1\num c}$ is a $\C^8$-bundle over the affine space $\C^2$.\\
2a&Two points on $t$.\\
&Stratum: $F_{2\num a}$ is a $\C^8$-bundle over $\Phi_{2\num a}$, which is a non-orientable $\op\Delta_1$-bundle over $X_{2\num a}\cong\B2{\Pp1}$.\\
2b&The point $p$ and a point outside $t$.\\
&Stratum: $F_{2\num b}$ is a $\C^7$-bundle over $\Phi_{2\num b}$, which is a non-orientable $\op\Delta_1$-bundle over $X_{2\num b}\cong\C^2$.\\
2c&A point on $t'$ and a point outside $t$.\\
&Stratum: $F_{2\num c}$ is a $\C^6$-bundle over $\Phi_{2\num c}$, which is a non-orientable $\op\Delta_1$-bundle over $X_{2\num c}\cong\C^3$.\\
2d&Two points outside $t$.\\
&The Borel--Moore homology of $F_{2\num d}$ and $\Phi_{2\num d}$ vanishes, because they are fibred over the configurations space $\B2{\C^2}$, whose twisted Borel--Moore homology vanishes.\\
3x&Configurations with three collinear points.\\
&To get all strata, we have to distinguish whether the line is $t$, 
if one of the singular points coincide with $p$ 
or lies on $t$
or if the configuration is general. 
In each of these cases the space $X_{\num{3x}}$ admits a locally fibration with fibre isomorphic to either $\B3{\Pp1}$ or $\B2{\C^2}$. Since the twisted Borel--Moore homology of both these configuration spaces vanishes, configurations of type $3$ do not contribute to the Borel--Moore homology of $\mathcal D^-_\delta$.\\
4a&Two points on $t$ and one point outside $t$.\\
&The stratum  $F_{4\num a}$ is a $\C^5$-bundle over $\Phi_{4\num a}$, which is a $\op\Delta_2$-bundle over the configuration space $X_{4\num a}=\B2{\Pp1}\times\C^2$.\\
4x&All other configurations of three points in linear general position.\\
&We have to consider the following subcases: one of the points is $p$ and the other two lie outside $t$ ($\C^4$-bundle), a singular point lies on $t'$ and the other two lie outside $t$ ($\C^3$-bundle), all three points lie outside $t$ (case $4^*$, $\C^2$-bundle). All these subcases correspond to configuration spaces with trivial twisted Borel--Moore homology.\\
5x&A line in $\Pp2$.\\
&We have to distinguish whether the line equals $t$, passes through $p$ of not. In each case $\num x$ the Borel--Moore homology of $\Phi_{\num{5x}}$ is trivial, because the singular configuration contains a rational curve.\\
6x&Three collinear points $p_1,p_2,p_3$ and a fourth point $q$ in general linear position.\\ 
& We have to distinguish between the following subcases: the three collinear points belong to $t$ ($\C^4$-bundle), a point $p_i$ and $q$ both lies on $t$ ($\C^3$-bundle), $p_1$ belongs to $t$ but and $p_2, p_3$ and $q$ do not ($\C^3$-bundle), none of the $p_i$ lies on $t$ but $q\in t$ ($\C^2$-bundle), $p_1\in t$ but all other point lie outside $t$ (case $6^*$, $\C$-bundle). In each of these cases the configuration space is fibred over a base space with fibre isomorphic to either $\B2{\C^2}$, $\B2{\C}$ or $\B3{\Pp1}$. For this reason, all configurations of type $6$ contribute trivially to the Borel--Moore homology of $\mathcal D^-_\delta$.\\ 
\end{tabular}
}
\end{table}

For configurations $K$ of type $7$--$13$ in Table~\ref{Vaslist} we can distinguish whether the general curve singular at $K$ will not contain the line $t$ 
(in which case we will call it a {configuration of the first kind}) or if every curve singular at $K$ will contain $t$ ({configuration of the second kind}). 

It is easy to see that if the $\van f$ does not contain $t$, and its singular locus contains a configuration $K$ of type $7$--$12$ in Vassiliev's classification, then the singular locus $K_f\subset\Pp2$ is either a line through $p$, or a conic (possibly singular) tangent to $t$ at the point $p$. Hence, a configuration $K$ of the first kind will either contain a rational curve, or a finite number of point lying on a rational curve. In this way we can prove that configurations of the first kind do not contribute to the spectral sequence converging to the Borel--Moore homology of $V_{p,t}\cap \Sigma$. Therefore, it suffices for us to consider the strata associated to configurations of the second kind, which we list in Table~\ref{flex-seckind}.

\begin{table}
\caption{\label{flex-seckind}Singular configurations of the second kind and associated strata.}
{\small
\begin{tabular}{l@{\ }p{11cm}}
$7'$&Two points on $t$ and two other points (no three points in the configurations are allowed to be collinear).\\
&Stratum: The general curve singular in such a configuration $K$ is the union of the line $t$, the line passing through the two points in $K\setminus t$ and a conic passing through the points of $K$. The configuration space $X_{7'}$ is contained in $\B2{t}\times\B2{\C^2}$ with complement a fibration over $\F2{\Pp1}$ with fibre isomorphic to $\B2\C$. Since the twisted Borel--Moore homology of both $\B2{\C^2}$ and $\B2\C$ vanish by Lemma~\ref{lem1}, the twisted Borel--Moore homology of $X_{7'}$ and the Borel--Moore homology of $\Phi_{7'}$ and $F_{7'}$ are trivial as well.\\
8a&The union of the line $t$ and a point outside $t$.\\
&The strata $F_{8\num a}$ and $\Phi_{8\num a}$ have trivial Borel--Moore homology, because the singular configurations of this type contain a rational curve.\\
8b& The union of a line different from $t$ and a point on $t$.\\
&The Borel--Moore homology of the strata $\Phi_{8\num b}$ and $F_{8\num b}$ is trivial, because configurations of type $\num{8\num b}$ always contain a rational curve.\\
9a&Five points $\{a,b,c,d,e\}$ with $a,b\in t$, $c,d\in(\Pp2\setminus t)$ and $\{e\}=t\cap cd$.\\
&Stratum: The configuration space $X_{9\num a}$ is isomorphic to $X_{7'}$, hence its twisted Borel--Moore homology vanishes. Therefore also the Borel--Moore homology of $\Phi_{9\num a}$ and of $F_{9\num a}$ is trivial.\\
9b&Five points $\{a,b,c,d,e\}$ with $a,b\in t$, $c,d\in(\Pp2\setminus t)$ and $\{e\}=t\cap cd$.\\
&Stratum: The stratum $F_{9\num b}$ is a $\C$-bundle over $\Phi_{9\num b}$, which in turn is a $\op\Delta_4$-bundle over the configuration space $X_{9\num b}$, which is the quotient of the space $\{(a,b,c,d)\in\tF4{\Pp2}| a,b\in t\}$ by the equivalence relation generated by $(a,b,c,d)\sim(b,a,d,c)$. \\
$10'$&Six points which are the pairwise intersection of four lines in general position, one of which is $t$.\\
&Stratum: The stratum $F_{10'}$ is a $\C$-bundle over $\Phi_{10'}$, which in turn is a $\op\Delta_5$-bundle over the configuration space $X_{10'}$ which is isomorphic to the configuration space of three unordered lines in general position and such that their union intersects $t$ in three distinct points.\\ 
$12'$&The union of $t$ and another line.\\
&The strata $\Phi_{12'}$ and $F_{12'}$ have trivial Borel--Moore homology. This is a consequence of the fact that the singular locus contains a rational curve.\\
13&The whole projective plane.
\end{tabular}
}
\end{table}

In view of the description of the strata given there, it suffices to deal with the configuration types $9\num b$ and $10'$. 
The Borel--Moore homology of $X_{9\num b}$ and $X_{10'}$ is not difficult to compute. However, we will not need this result, because it is possible to prove that the contributions of these two strata kill each other in the spectral sequences associated to the stratifications $\Phi_\pu$ and $F_\pu$.

\begin{lem}\label{9+10}
The Borel--Moore homology of $\Phi_{9\num b}\cup\Phi_{10'}\subset\ba\Lambda\ba$  is trivial, and the same holds for $F_{9\num b}\cup F_{10'}\subset\ba\mathcal X\ba$.
\end{lem}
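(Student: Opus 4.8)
The plan is to exhibit a space $Y$ which maps to $\Phi_{9\num b}\cup\Phi_{10'}$ (respectively to $F_{9\num b}\cup F_{10'}$) realizing it as a quotient by a finite group action, in such a way that the sign twist relevant to $\Phi_{10'}$ forces the contribution of both strata to lie in an \emph{alternating} part of the Borel--Moore homology of a space whose homology is totally invariant. Concretely, I would describe $X_{10'}$ as the quotient of the space of ordered quadruples of lines in general position of which the last one is $t$ --- equivalently, of ordered triples of lines $(\ell_1,\ell_2,\ell_3)$ in $\Pp2\duale$ whose union meets $t$ in three distinct points --- by the $\s_3$-action permuting $\ell_1,\ell_2,\ell_3$. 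Since interchanging two of these lines swaps \emph{two} pairs among the six singular points, the induced permutation has even sign, so the relevant twisted local system on $X_{10'}$ is in fact constant; this is exactly the phenomenon exploited for $X'_{10}$ in Lemma~\ref{hom10}.

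The key point is that $X_{9\num b}$ is an open subset of a natural compactification of $X_{10'}$, or more precisely that there is one ambient fibration over $\tF4{\Pp2}\cong\PGL(3)$ containing both strata. I would let
$$
Y:=\{(a,b,c,d)\in\tF4{\Pp2}\mid a,b\in t\},
$$
which is a $\B2{\Pp1}$-bundle (in the $(a,b)$ variables) over the locus in $\tF2{\Pp2}$ given by a pair of points off $t$ --- in particular $\BM\pu Y$ is a tensor product of $\BM\pu{\PGL(3)}$ by $\BM\pu{\B2{\Pp1}}$ and is invariant under the involution $(a,b,c,d)\mapsto(b,a,d,c)$ that generates the equivalence relation defining $X_{9\num b}$. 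The stratum $X_{9\num b}$ is then $Y$ modulo that even involution, while $X_{10'}$ is obtained from (the closure of) the locus in $Y$ where in addition $ac\cap bd$, $ad\cap bc$, etc., give a genuine four-line configuration, again after passing to a finite quotient by even permutations. Assembling the long exact sequence in twisted Borel--Moore homology associated with the inclusion of the closed stratum $\Phi_{10'}$ into $\Phi_{9\num b}\cup\Phi_{10'}$ and its open complement (a shift of $\Phi_{9\num b}$), and observing that both outer terms are computed from the homology of $Y$ in a way that picks out complementary (invariant vs. alternating) pieces, one finds that the connecting map is an isomorphism in every degree. Hence $\BM\pu{\Phi_{9\num b}\cup\Phi_{10'}}=0$. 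The statement for $F_{9\num b}\cup F_{10'}$ then follows at once, since $F_{9\num b}\cup F_{10'}$ is a $\C$-bundle over $\Phi_{9\num b}\cup\Phi_{10'}$ (both $F_{9\num b}$ and $F_{10'}$ being $\C$-bundles over the corresponding $\Phi$'s by Table~\ref{flex-seckind}), and the Borel--Moore homology of the total space of a complex line bundle over a space with vanishing Borel--Moore homology vanishes.

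The main obstacle I anticipate is bookkeeping the sign conventions: one must check carefully that the local system $\pm\Q$ restricts to the \emph{constant} system on the part coming from $X_{10'}$ (this uses the even-sign computation above) but restricts nontrivially, or rather interacts with the simplicial orientation nontrivially, on the part coming from $X_{9\num b}$ --- and that these two behaviours are genuinely complementary, so that no class survives on both sides of the long exact sequence. Equivalently, one must verify that the connecting homomorphism is forced to be an isomorphism for purely representation-theoretic reasons (a class invariant under a transposition of two points cannot be alternating, and vice versa), exactly as in the proofs of Lemma~\ref{hom9c} and Lemma~\ref{union9}. Once the group actions and orientations are correctly identified, the rest is the standard argument with the long exact sequence of a closed--open decomposition together with the bundle structures from Table~\ref{flex-seckind}.
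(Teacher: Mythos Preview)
Your approach is genuinely different from the paper's, and it contains a real gap at the crucial step.

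You correctly observe that the twist $\pm\Q$ restricts to the constant system on $X_{10'}$, since a transposition of two lines $\ell_i,\ell_j$ swaps two pairs of singular points and hence is even on the six points. However, exactly the same thing happens for $X_{9\num b}$: the involution $(a,b,c,d)\mapsto(b,a,d,c)$ defining $X_{9\num b}$ as a quotient of $Y$ swaps the pairs $\{a,b\}$ and $\{c,d\}$ while fixing $e$, so it too is even on the five singular points. Thus the twist is constant on \emph{both} strata, and the twisted Borel--Moore homology of each is simply the invariant part of $\BM\pu Y$ under the relevant subgroup of $\s_3$. These are not ``complementary (invariant vs.\ alternating) pieces'' as you hope; they are the same kind of piece. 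Consequently the representation-theoretic mechanism you invoke, modeled on Lemmas~\ref{hom9c} and~\ref{union9}, does not apply here, and you are left with no argument for why the connecting homomorphism $\BM k{\Phi_{10'}}\to\BM{k-1}{\Phi_{9\num b}}$ is an isomorphism. (Your description of how $X_{10'}$ sits relative to $Y$ is also garbled: in fact $Y$ is naturally isomorphic to the space of \emph{ordered} triples $(\ell_1,\ell_2,\ell_3)$ with $(t,\ell_1,\ell_2,\ell_3)$ in general position, so $X_{10'}=Y/\s_3$ and $X_{9\num b}=Y/\s_2$; but this clean picture only sharpens the problem, since both quotients take invariants.)

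The paper avoids the connecting-map computation entirely by a simplicial trick. Over each $K\in X_{10'}$ one passes to the \emph{closed} $5$-simplex $\Psi_K$ and singles out the $2$-face $D_K$ spanned by the three points of $K$ lying off $t$. The open star $\Chi_K$ of $D_K$ in $\Psi_K$ (the union of the interiors of all faces containing $D_K$) has trivial Borel--Moore homology, because both $\Psi_K$ and $\Psi_K\setminus\Chi_K$ are contractible. Assembling over $X_{10'}$ gives a space $\Chi$ with trivial Borel--Moore homology. Stratifying $\Chi$ by face dimension, the top two layers $\Chi^{(5)}$ and $\Chi^{(4)}$ are exactly $\Phi_{10'}$ and $\Phi_{9\num b}$, while the bottom two layers $\Chi^{(2)}$ and $\Chi^{(3)}$ are identified with $\Phi_{4^*}$ and $\Phi_{6^*}$, which were already shown to have trivial Borel--Moore homology in Table~\ref{flex1-6}. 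Hence $\BM\pu{\Phi_{9\num b}\cup\Phi_{10'}}=\BM\pu\Chi=0$. Your final sentence, that $F_{9\num b}\cup F_{10'}$ is a $\C$-bundle over $\Phi_{9\num b}\cup\Phi_{10'}$ and therefore inherits the vanishing, is correct and is exactly how the paper concludes.
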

\proof
The stratum $\Phi_{10'}$ is a simplicial bundle over $X_{10'}$. Its fibre over a configuration $K\in X_{10'}$ is an open $5$-dimensional simplex whose vertices are in canonical correspondence with the six points in $K$.
We can partially compactify $\Phi'_{10'}$ by considering the simplicial bundle $\Psi$ over $X_{10'}$ whose fibres are closed $5$-dimensional simplices, in such a way that the fibres of $\Phi_{10'}$ coincide with the interiors of the fibres of $\Psi\rightarrow X_{10'}$. 
The simplicial bundle $\Psi$ is contained $\ba\Lambda\ba$, where it can be realized as the union of the simplices corresponding to subsets of the configurations in $X_{10'}$. 

Observe that every configuration $K\in X_{10'}$ contains exactly three points lying outside $t$. They correspond to a $2$-dimensional face $D_K$ of the fibre $\Psi_K$ of $\Psi\rightarrow X_{10'}$ lying over $K$. Let us define $\Chi_K$ to be the union of the interior of all the faces of the $5$-dimensional simplex $\Psi_K$ that contain the interior of  $D_K$.
The complement of $\Chi_K$ in $\Psi_K$ is the union of all closed faces that do not contain the three vertices of $D_K$. The Borel--Moore homology of $\Chi_K$ coincides with the relative homology of the pair $(\Psi_K,\Psi_K\setminus\Chi_K)$, which is trivial because both spaces can be contracted to the same point. 

Let us consider the subset $\Chi\subset\Psi$ given by the union of the $\Chi_K$ for all $K\in X_{10'}$. Then the Borel--Moore homology of $\Chi$ is trivial as well. On the other hand, we can view $\Chi$ as the disjoint union of open simplices of dimension varying from two to five. For $k=2,\dots,5$, denote by $\Chi^{(k)}$ the union of the interior of all $k$-dimensional faces of simplices contained in $\Chi$. 

The space $\Chi^{(2)}$ is fibred over $X_{10'}$ with fibre the interior of $D_K$. It coincides with the stratum $\Phi_{4^*}$ coming from configurations of type $4^*$, containing three points in general linear position not lying on $t$ (see Table~\ref{flex1-6}). In particular, the Borel--Moore homology of $\Chi^{(2)}$ is trivial.
Analogously, the stratum $\Chi^{(3)}$ coincides with the stratum $\Phi_{6^*}$ corresponding to configurations of type $6^*$ (three collinear points of which exactly one lies on $t$ and a fourth point not lying on $t$ and not collinear with the others). As we explained in Table~\ref{flex1-6}, the Borel--Moore homology of $\Chi^{(3)}$ is trivial.

Hence, the Borel--Moore homology of $\Chi$ coincides with the Borel--Moore homology of the union of its strata $\Chi^{(4)}=\Phi_{9\num b}$ and $\Chi^{(5)}=\Phi_{10'}$. This proves that the Borel--Moore homology of $\Phi_{9\num b}\cup\Phi_{10'}$ is trivial.
As to the second part of the claim, it suffices to observe that $F_{9\num b}\cup F_{10'}$ is a complex line bundle over $\Phi_{9\num b}\cup\Phi_{10'}$.
\qed

Furthermore, also the configuration space $X_{13}=\{\Pp2\}$ contributes trivially to the Borel--Moore homology of $V_{p,t}\cap\Sigma$.

\begin{lem}
\begin{enumerate}
\item
The $\ee^1$ terms of the spectral sequence $$\ee^r_{u,v}\Rightarrow\BM{u+v}{\ba\Lambda\ba\setminus \Phi_{13}}$$ associated with the stratification $\Phi_\pu$ are as given in Table~\ref{flex-first}.
\item The Borel--Moore homology of $F_{13}$ is trivial.
\end{enumerate}
\end{lem}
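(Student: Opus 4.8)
The plan is to read the $\ee^1$-page off directly from the stratification $\Phi_\pu$ of $\ba\Lambda\ba$, and then to feed the resulting Borel--Moore homology of $\ba\Lambda\ba\setminus\Phi_{13}$ into the open-cone structure of $F_{13}$. For part~(1) the first move is to go once through the ordered list of refined configuration types (Tables~\ref{flex1-6} and~\ref{flex-seckind}) and compute $\ee^1_{u,v}=\BM{u+v}{\Phi_u}$, discarding the many types that contribute nothing. Every type whose generic singular set contains a curve --- necessarily rational, as the relevant singular plane quartics are; this covers $5\num x$, $8\num a$, $8\num b$, $12'$ and the curve parts of the other types --- gives a stratum with vanishing Borel--Moore homology by \cite[Lemma~2.17]{OTM4} and the remarks following it. Every type whose configuration space admits a locally trivial fibration with fibre $\B2{\C^2}$, $\B2{\C}$ or $\B k{\Pp1}$ for some $k\ge 3$ --- which disposes of $2\num d$, $3\num x$, $4\num x$, $6\num x$, $7'$ and $9\num a$ --- gives a vanishing stratum by Lemma~\ref{lem1}. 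What survives are the finite strata $1\num a$, $1\num b$, $1\num c$, $2\num a$, $2\num b$, $2\num c$, $4\num a$, $9\num b$ and $10'$, which are precisely the non-zero columns of Table~\ref{flex-first}.

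For each surviving type, $\Phi_u$ is a (possibly non-orientable) bundle of open simplices $\op\Delta_m$ over the configuration space $X_u$, so that $\BM{k}{\Phi_u}$ is, up to the degree shift by $m$, the Borel--Moore homology of $X_u$ with coefficients in the orientation local system of the bundle --- the constant one unless some loop in $X_u$ realises an odd permutation of the marked singular points. It remains to read these (twisted or constant) homologies off from the explicit descriptions: the affine configuration spaces $X_{1\num a}$, $X_{1\num b}$, $X_{1\num c}$, $X_{2\num b}$, $X_{2\num c}$ and the $\C^2$-factor of $X_{4\num a}$ contribute only in top degree, with the appropriate Tate twist; $X_{2\num a}\cong\B2{\Pp1}$ and the $\B2{\Pp1}$-factor of $X_{4\num a}$ are handled by Lemma~\ref{lem1}; and $X_{9\num b}$, $X_{10'}$ are finite quotients of $\tF4{\Pp2}\cong\PGL(3)$, respectively of $\tF4{\Pp2\duale}\cong\PGL(3)$, by subgroups acting through even permutations of the singular points, so that their (here twisted $=$ untwisted) Borel--Moore homology equals that of $\PGL(3)$, exactly as in the proofs of Lemmas~\ref{hom9b} and~\ref{hom10}. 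Inserting the simplex-dimension shifts produces Table~\ref{flex-first}.

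For part~(2), recall from Proposition~\ref{ucci} that $F_{13}$ is a topological open cone, with vertex a single point $v$, over $\mathrm B:=\ba\Lambda\ba\setminus\Phi_{13}$. I would first determine $\BM\pu{\mathrm B}$ from part~(1) and the differentials of $\ee^r$: the part of $\ba\Lambda\ba$ coming from configuration types $1$--$4$ coincides with the analogous part of Vassiliev's auxiliary complex for \emph{unmarked} plane quartics (this is the argument of Lemma~\ref{first-Lambda}), so by Vassiliev's computation in \cite{Vart} it contributes only a one-dimensional class in degree $0$, carrying the trivial Hodge structure $\Q$ since there is no symmetry here; types $5$ and $6$ are already trivial on the $\ee^1$-page; among the remaining types, $9\num b$ and $10'$ cancel against one another by Lemma~\ref{9+10}, while $7'$, $8\num a$, $8\num b$, $9\num a$ and $12'$ are each trivial on $\ee^1$. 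Hence $\BM\pu{\mathrm B}\cong\BM\pu{\mathrm{pt}}$. Now write $F_{13}$ as the disjoint union of $v$ and $F_{13}\setminus\{v\}$, a trivial bundle of open intervals $\op\Delta_1$ over $\mathrm B$, so that $\BM{k}{F_{13}\setminus\{v\}}\cong\BM{k-1}{\mathrm B}$; the long exact sequence of the closed inclusion $\{v\}\hookrightarrow F_{13}$ then gives $\BM{k}{F_{13}}\cong\BM{k-1}{\mathrm B}=0$ for $k\ge 2$ together with an exact sequence
\begin{equation*}
0\to\BM{1}{F_{13}}\to\BM{0}{\mathrm B}=\Q\xrightarrow{\partial}\BM{0}{\{v\}}=\Q\to\BM{0}{F_{13}}\to 0,
\end{equation*}
in which $\partial$ is an isomorphism --- the fundamental class of the interval bundle has non-zero boundary at the cone point. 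Therefore $\BM\pu{F_{13}}=0$.

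The main obstacle is the one flagged in part~(1): organising the roughly twenty refined configuration types and keeping track, for the surviving finite ones, of the orientation local systems (which decide twisted versus constant coefficients) and of the Tate twists contributed by the affine configuration spaces, so that Table~\ref{flex-first} comes out exactly right. Granting that, part~(2) is a short formal argument combining the open-cone structure of $F_{13}$ with Lemma~\ref{9+10} and the comparison with Vassiliev's computation for unmarked quartics.
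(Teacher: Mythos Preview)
Your overall strategy coincides with the paper's, and part~(2) is argued correctly: the paper also reduces $\BM\pu{\mathrm B}$ to a single $\Q$ in degree~$0$ via the comparison with Vassiliev's unmarked-quartic computation (Lemma~\ref{first-Lambda}) together with Lemma~\ref{9+10}, and then invokes the open-cone structure of $F_{13}$.

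However, part~(1) contains a misreading of the statement. Table~\ref{flex-first} has exactly \emph{seven} columns, corresponding to the types $1\num a,1\num b,1\num c,2\num a,2\num b,2\num c,4\num a$; the types $9\num b$ and $10'$ do \emph{not} appear in it. The paper's proof of part~(1) already invokes Lemma~\ref{9+10} at this stage to omit all configurations with more than four singular points from the displayed $\ee^1$-page (the other such types being trivial for the reasons you list). So your sentence ``What survives are the finite strata \ldots $9\num b$ and $10'$, which are precisely the non-zero columns of Table~\ref{flex-first}'' is incorrect as written: $\Phi_{9\num b}$ and $\Phi_{10'}$ do have non-trivial Borel--Moore homology individually, but the lemma asserts a table that has already absorbed their cancellation. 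Your placement of Lemma~\ref{9+10} in part~(2) rather than part~(1) is logically fine for computing $\BM\pu{\mathrm B}$, but it does not establish part~(1) as stated. Relatedly, your description of $X_{10'}$ as a finite quotient of $\tF4{\Pp2\duale}$ is not quite right (one of the four lines, namely $t$, is fixed), though the paper deliberately avoids computing $\BM[\pm\Q]\pu{X_{10'}}$ directly for exactly this reason.
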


\proof
The terms of this spectral sequence are given by $\ee^1_{u,v}=\BM v{\Phi_u}$ for all configuration types $u$ such that the Borel--Moore homology of $\Phi_u$ is non-trivial. Furthermore, we can omit all configurations with more than $4$ singular points in view of Lemma~\ref{9+10}.
Then the first part of the claim follows from the description of the strata $\Phi_{j\num x}$ given in Table~\ref{flex1-6}. 

We can observe that only configurations of type $1$--$4$ contribute non-trivial  $\ee^1$ terms. Furthermore, the union of the configuration spaces $X_{j\num x}$ with $j=1,\dots,4$ gives all configurations of $\leq 4$ points in $\Pp2$.
Hence, the reasoning in the proof of Lemma~\ref{first-Lambda} applies also in this case, thus yielding
$$\ee^{\infty}_{1,-1}=0, \ 
\ee^{\infty}_{u,v}=0\text{ for }(u,v)\neq(1,-1).
$$

As a consequence, the Borel--Moore homology of $\ba\Lambda\ba\setminus\Phi_{13}$ is $1$-dimensional and concentrated in degree $0$.
Then the second part of the claim follows from the fact that the $F_{13}$ is an open cone over $\ba\Lambda\ba\setminus\Phi_{13}$ in view of Proposition~\ref{ucci}.
\qed

\begin{table}
  \caption{\label{flex-first} $\ee^1$ terms of the spectral sequence $\ee^r_{u,v}\Rightarrow\BM{u+v}{\ba\Lambda\ba\setminus\Phi_{13}}$.}
$$\begin{array}{r|cccccccc}
v&&&&&&&\\[6pt]
 1&0&0&\Q(2)&0&0&\Q(3)&\Q(3)&\\
 0&0&\Q(1)&0&0&\Q(2)&0&0&\\
-1&\Q&0&0&\Q(1)&0&0&0&\\
\hline
&1&2&3&4&5&6&7&u\\
\num{type}&\num{(1a)}&\num{(1b)}&\num{(1c)}&\num{(2a)}&\num{(2b)}&\num{(2c)}&\num{(4a)}&
\end{array}
$$
\end{table}

We are ready to calculate the Borel--Moore homology of $V_{p,t}\cap\Sigma$.
\begin{lem}\label{cohXpt}
\begin{enumerate}
\item The $\Ee^1$ terms of the spectral sequence $$\Ee^r_{u,v}\Rightarrow\BM{u+v}{V_{p,t}\cap\Sigma}$$ are as given in Table~\ref{flex-second}. This spectral sequence degenerates at $\Ee^1$.
\item The rational cohomology of $V_{p,t}\setminus\Sigma$ has HG polynomial $(1-t\Ll)^3$.
\end{enumerate}
\end{lem}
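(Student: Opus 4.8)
The plan is to prove part~(1) and then derive part~(2) from it by Alexander duality. For the shape of the $\Ee^1$ page, recall that $\Ee^1_{u,v}=\BM{u+v}{F_u}$, where $F_\pu$ denotes the stratification of $\ba\mathcal X\ba$ by the configuration types. By the classification carried out in Tables~\ref{flex1-6} and~\ref{flex-seckind}, together with the facts established above that configurations of the first kind and the stratum $F_{13}$ contribute trivially and with Lemma~\ref{9+10}, the only strata with non-trivial Borel--Moore homology are those of types $1\num a$, $1\num b$, $1\num c$, $2\num a$, $2\num b$, $2\num c$ and $4\num a$: every other type either contains a rational curve, or has a configuration space whose twisted Borel--Moore homology vanishes by Lemma~\ref{lem1}, or is cancelled against a neighbouring stratum as in Lemma~\ref{9+10}. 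For each of these seven types, $F_u$ is a complex vector bundle over $\Phi_u$ of rank $k_u$, with $k_u=10,9,8,8,7,6,5$ read off from Table~\ref{flex1-6}, so $\Ee^1_{u,v}=\ee^1_{u,v-2k_u}\otimes\Q(k_u)$ with $\ee^1$ the page of Table~\ref{flex-first}; substituting and performing the Tate twists produces precisely Table~\ref{flex-second}.

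To see that the spectral sequence degenerates at $\Ee^1$, I would argue by weights. Let $m_u\in\{1,2,3\}$ be the number of points in a configuration of type $u$ (so $m_u=1$ for the types coming from $1\num x$, $m_u=2$ for those from $2\num x$, and $m_u=3$ for $4\num a$). In each case the configuration space $X_u$ is an iterated product of affine spaces and copies of $\B2{\Pp1}$, so its Borel--Moore homology, with the relevant (possibly twisted) rank one coefficients, is in each degree $d$ pure of weight $-d$; chasing the structure of $F_u$ as a $\C^{k_u}$-bundle over a $\op\Delta_{m_u-1}$-bundle over $X_u$ then shows that $\Ee^1_{u,v}$ is pure of weight $m_u-1-(u+v)$. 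Since the differentials of this spectral sequence are morphisms of mixed Hodge structures, a non-zero $d_r\colon\Ee^r_{u,v}\to\Ee^r_{u-r,v+r-1}$ would force $m_{u-r}=m_u-1$; as $m_u$ is non-decreasing in $u$ and takes only the three stated values, a direct inspection of the seven non-zero entries of Table~\ref{flex-second} shows that no such pair of non-zero groups occurs. Equivalently, none of the seven non-zero bidegrees is obtained from another by $(u,v)\mapsto(u-r,v+r-1)$ with $r\ge1$. Hence all differentials vanish and $\Ee^\infty=\Ee^1$.

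Because the Hodge structures involved are of Tate type and $\wp$ is additive in short exact sequences, and because $\ba\mathcal X\ba\to V_{p,t}\cap\Sigma$ induces an isomorphism in Borel--Moore homology, it follows that $\wp(\BM\pu{V_{p,t}\cap\Sigma})$ is the sum of the entries of Table~\ref{flex-second}; thus $\BM\pu{V_{p,t}\cap\Sigma}$ is concentrated in its three top degrees, one of them carrying a rank one group and the other two rank three groups. Since $V_{p,t}\cong\C^{11}$ and $V_{p,t}\setminus\Sigma$ is smooth, Alexander duality~\eqref{alexander} gives $\coh0{V_{p,t}\setminus\Sigma}=\Q$ and $\coh i{V_{p,t}\setminus\Sigma}\cong\BM{21-i}{V_{p,t}\cap\Sigma}\otimes\Q(-11)$ for $i\ge1$. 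Inserting the Borel--Moore homology just computed, the group in the top degree accounts for the degree one cohomology, the next group for the degree two cohomology, and the rank one group for the degree three cohomology, each of Tate type with the weight dictated by its degree; this yields the claimed HG polynomial $(1-t\Ll)^3$.

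The step I expect to be most delicate is the purity part of the degeneration argument: one has to be sure that every configuration space $X_u$ entering Table~\ref{flex-second} really has the simple product form needed for Lemma~\ref{lem1} and the K\"unneth formula to give purity of weight $-d$, and that every bundle shift is recorded with the correct Tate twist, since it is precisely the interplay of these shifts with the jumps of $m_u$ that places the seven non-zero entries in mutually incompatible positions. Once this is in place, the remaining computation, including the final application of Alexander duality, is routine bookkeeping.
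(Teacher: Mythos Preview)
Your argument is correct and follows the same route as the paper: compute the $\Ee^1$ page from the $\ee^1$ page by twisting each stratum by its vector-bundle rank, observe degeneration at $\Ee^1$, and then apply Alexander duality to obtain the HG polynomial of $V_{p,t}\setminus\Sigma$.

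Two remarks. First, your weight argument for degeneration is correct but more than you need. As you yourself note at the end of that paragraph, no two of the non-zero bidegrees are connected by any $d_r$; the paper simply says ``degeneracy at $\Ee^1$ follows immediately from the shape of the spectral sequence'' and leaves it at that. Your purity computation is fine (the configuration spaces $X_u$ really do have Borel--Moore homology pure of weight $-d$ in degree $d$), but even after invoking it you still have to inspect positions, so it does not actually buy you anything here. Second, a small bookkeeping slip: Table~\ref{flex-second} groups the seven strata into three columns indexed by the types $(1\num x)$, $(2\num x)$, $(4\num x)$, so it has three non-zero entries, not seven. If you keep the seven strata separate and shift each by its own $k_u$, you land at bidegrees $(1,19),(2,18),(3,17),(4,15),(5,14),(6,13),(7,11)$ rather than at $(1,19),(2,17),(3,15)$; either presentation gives the same Borel--Moore homology and the same degeneration, but ``produces precisely Table~\ref{flex-second}'' is not quite accurate as written.
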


\begin{table}
  \caption{\label{flex-second} $\Ee^1$ terms of the spectral sequence $\Ee^r_{u,v}\Rightarrow \BM{u+v}{\ba\mathcal X\ba}=\BM{u+v}{V_{p,t}\cap\Sigma}$.}
$$\begin{array}{r|cccc}
v&&&&\\[6pt]
19&\Q(10)^{\oplus3}&0&0&\\
18&0&0&0&\\
17&0&\Q(9)^{\oplus3}&0&\\
16&0&0&0&\\
15&0&0&\Q(8)&\\
\hline
&1&2&3&u\\
\num{type}&(1\num x)&(2\num x)&(4\num x)&
\end{array}
$$

\end{table}
\proof 
We have $\Ee^1_{u,v}=\BM{u+v}{F_{u}}$, where $u$ refers to the $u$th configuration type in our list. Since $F_u$ is a vector bundle of a certain rank $k_u$ over $\Phi_{u}$, one has $\Ee^1_{u,v}=\ee^1_{u,v-2k_u}\otimes\Q(k_u)$. This allows to compute the $\Ee^1_{u,v}$ as in Table~\ref{flex-second}. Degeneracy at $\Ee^1$ follows immediately from the shape of the spectral sequence. The result on the cohomology of the complement of the discriminant $V_{p,t}\setminus\Sigma$ follows from Alexander's duality~\eqref{alexander}.
\qed

\begin{table}
\caption{\label{flex-leray}
Leray spectral sequence of the fibration $\imin_\delta\rightarrow \Pp{}(T_{\Pp2})$}
$$\begin{array}{r|cccccccc}
q&&&&&&&&E^2_{p,q}\\[6pt]
3&\Q(-3)  &0&\Q(-4)^2&0&\Q(-5)^2&0&\Q(-6)\\
2&\Q(-2)^3&0&\Q(-3)^6&0&\Q(-4)^6&0&\Q(-5)^3\\
1&\Q(-1)^3&0&\Q(-2)^6&0&\Q(-3)^6&0&\Q(-4)^3\\
0&\Q&0&\Q(-1)^3&0&\Q(-2)^3&0&\Q(-3)
\\\hline
&0&1&2&3&4&5&6&p\\
\end{array}
$$

$$\begin{array}{r|cccccccc}
q&&&&&&&&E^3_{p,q}\\[6pt]
3&  0     &0&0       &0&0       &0&\Q(-6)\\
2&0       &0&\Q(-3)  &0&\Q(-4)  &0&\Q(-5)  \\
1&\Q(-1)  &0&\Q(-2)  &0&\Q(-3)  &0&0       \\
0&\Q&0&0     &0&0     &0&0
\\\hline
&0&1&2&3&4&5&6&p\\
\end{array}
$$
\end{table}

\begin{proof}[Proof of Theorem~\ref{cohqdelta}]
We want to compute the cohomology of $\imin_\delta$ by using the Leray spectral sequence associated to the fibration $\imin_\delta\rightarrow\Pp{}(T_{\Pp2})$ with fibre $V_{p,t}\setminus \Sigma$. From the fact that $\Pp{}(T_{\Pp2})$ is a $\Pp1$-bundle over $\Pp2$ and from the computation of the cohomology of the fibre given in Lemma~\ref{cohXpt} above we get that the $E_2$ term of the Leray spectral sequence is as given in the first part of Table~\ref{flex-leray}.

To compute the differentials of the spectral sequence, we keep in mind that we proved in Lemma~\ref{division} that it has to be a tensor product of the cohomology of $\GL(3)$. There is only one possible behaviour of the differentials that would ensure such a divisibility: This is the case in which the $E^3$ term is as in the second part of Table~\ref{flex-leray} and the spectral sequence degenerates at $E^3$. This yields
$$\coh\pu{\imin_\delta}\cong\coh\pu{\GL(3)}.$$
This isomorphism implies the claim, since by Lemma~\ref{division} we also have $\coh\pu{\imin_\delta}\cong\coh\pu{\qmin_\delta}\otimes \coh\pu{\GL(3)}$.
\end{proof}

\section{Vassiliev--Gorinov's method}\label{VGmethod}
In order to make the article as self-contained as possible, we include here an introduction to Vassiliev--Gorinov's method for computing the cohomology of complements of discriminants, following \cite{OTM4} and \cite{OTtesi}. This review of the method is by no means complete, and we encourage the interested reader to consult \cite{Vart}, \cite{Gorinov} and \cite{OTM4}.

Let $Z$ be a projective variety, $\mathcal F$ a vector bundle on $Z$ and $V$ the space of global sections of $\mathcal F$. Define the discriminant $\Sigma\subset V$ as the locus of sections with a vanishing locus which is either singular or not of the expected dimension. Assume that $\Sigma$ is a subvariety of $V$ of pure codimension $1$. 
Our aim is to  compute the rational cohomology of the complement of the discriminant, $X=V\setminus \Sigma$. This is equivalent to determining the Borel--Moore homology of the discriminant, because there is an isomorphism between the reduced cohomology of $X$ and Borel--Moore homology of $\Sigma$. If we denote by $M$ the dimension of $V$, this isomorphism can be formulated as
\begin{equation}\label{alexander}\tilde H^{\pu}(X;\Q)\cong \bar H_{2M-\pu-1}(\Sigma;\Q)(-M).\end{equation}

\begin{dfn}
A subset $S\subset Z$ is called a \emph{configuration} in $Z$ if it is compact and non-empty. The space of all configurations in $Z$ is denoted by $\Conf(Z)$.
\end{dfn}

\begin{prop}[\cite{Gorinov}]
The Fubini--Study metric on $Z$ induces in a natural way on $\Conf(Z)$ the structure of a compact complete metric space.
\end{prop}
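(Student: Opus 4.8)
The plan is to realise $Z$ as a closed subvariety of some projective space $\Pp n$, endow $\Pp n$ with a Fubini--Study metric $d$, and restrict $d$ to $Z$; since $\Pp n$ is compact, $(Z,d)$ is a compact metric space. Different projective embeddings of $Z$, or rescalings of $d$, yield Lipschitz-equivalent metrics, so the resulting topology — and in fact the uniform structure — on $\Conf(Z)$ does not depend on these choices, which is what "in a natural way" refers to. On $\Conf(Z)$ one then puts the Hausdorff metric
$$
d_H(S,T)=\max\Bigl\{\,\sup_{s\in S}\inf_{t\in T}d(s,t),\ \sup_{t\in T}\inf_{s\in S}d(s,t)\,\Bigr\}.
$$
The first step is the routine check that $d_H$ is a metric: finiteness comes from compactness of $Z$, symmetry is built in, the triangle inequality is a short estimate, and $d_H(S,T)=0\Rightarrow S=T$ uses that configurations are closed (a point at distance $0$ from a closed set lies in it). Note that, $Z$ being compact, its compact subsets are exactly its nonempty closed subsets, so $\Conf(Z)$ is the space of nonempty closed subsets of $Z$.

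It then remains to prove that $(\Conf(Z),d_H)$ is compact; I would establish completeness and total boundedness separately and invoke the fact that a metric space which is complete and totally bounded is compact. For completeness, given a $d_H$-Cauchy sequence $(S_n)$, I would set $S_\infty$ to be the set of all limits of convergent sequences $(s_{n_k})$ with $s_{n_k}\in S_{n_k}$, $n_k\to\infty$. One checks that $S_\infty$ is nonempty (choosing $s_n\in S_n$ arbitrarily and using sequential compactness of $Z$) and closed, and then that $d_H(S_n,S_\infty)\to 0$ by treating the two one-sided suprema in the definition of $d_H$ separately and feeding in the Cauchy estimates. For total boundedness, fix $\varepsilon>0$ and choose a finite $\varepsilon$-net $N\subset Z$ (possible since $Z$ is compact, hence totally bounded); for a configuration $S$ put $N_S:=\{x\in N:d(x,S)\le\varepsilon\}$, which is a nonempty subset of $N$ with $d_H(S,N_S)\le\varepsilon$. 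Since $N$ is finite there are only finitely many possible sets $N_S$, so $\Conf(Z)$ lies within $d_H$-distance $\varepsilon$ of a fixed finite subset; this gives total boundedness and completes the argument.

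I expect the Cauchy-sequence step to be the most delicate point, mainly because verifying that the candidate $S_\infty$ is genuinely the $d_H$-limit (both one-sided inclusions, up to $\varepsilon$) requires care with the order of the quantifiers. An equally good alternative, which packages completeness and compactness together, is to prove sequential compactness directly by a Blaschke-type selection argument: fix for each $m$ a finite $\tfrac1m$-net $N_m$ of $Z$; for a sequence $(S_n)$ the sets $A_n^{(m)}:=\{x\in N_m:d(x,S_n)\le\tfrac1m\}$ take only finitely many values for each fixed $m$, so a diagonal extraction produces a subsequence along which $A_n^{(m)}$ is eventually constant, equal to some $A^{(m)}$, for every $m$; the closed $\tfrac1m$-neighbourhoods $\bigcup_{x\in A^{(m)}}\bar{B}(x,\tfrac1m)$ then sandwich the tail of the subsequence to within $O(\tfrac1m)$ in $d_H$, and their behaviour as $m\to\infty$ yields the limit configuration. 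Either route is entirely standard; the only input used is that $(Z,d)$ is a compact metric space.
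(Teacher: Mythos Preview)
Your argument is correct and is precisely the standard proof that the hyperspace of nonempty closed subsets of a compact metric space, equipped with the Hausdorff distance, is itself a compact metric space. Note, however, that the paper does not supply its own proof of this proposition: it is quoted from \cite{Gorinov} and stated without argument, so there is no ``paper's proof'' to compare against. Gorinov's treatment likewise relies on the Hausdorff metric on configurations; your write-up is an expanded version of the same idea, with the completeness-plus-total-boundedness route (or the Blaschke selection alternative you sketch) being the usual way to verify compactness.
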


To every element in $v\in V$, we can associate its singular locus $K_v\in\Conf(Z)\cup\{\emptyset\}$. We have that $K_0 $ equals $Z$, and that $L(K):=\{v\in V: K\subset K_v\}$ is a linear space for all $K\in \Conf(Z)$. 

Vassiliev--Gorinov's method is based on the choice of a collection of families 
of configurations $X_1,\ldots,X_R\subset\Conf(Z)$, satisfying some axioms (\cite[3.2]{Gorinov}, \cite[List 2.1]{OTM4}). Intuitively, we have to start by classifying all possible singular loci of elements of $V$. Note that singular loci of the same type have a space $L(K)$ of the same dimension. We can put all singular configurations of the same type in a family. Then we order all families we get according to the inclusion of configurations. In this way we obtain a collection of families of configurations which may already satisfy Gorinov's axioms. If this is not the case, the problem can be solved by adding new families to the collection. Typically, the elements of these new families will be degenerations of configurations already considered. 
For instance, configurations with three points on the same projective line and a point outside it can degenerate into configurations with four points on the same line, even if there is no $v\in V$ which is only singular at four collinear points.  

Once the existence of a collection $X_1,\dots,X_R$ satisfying Gorinov's axioms is established, Vassi\-liev--Gorinov's method gives a recipe for constructing a space $\ba \mathcal X\ba$ and a map
$$|\epsilon|\co  \ba\X\ba\longrightarrow \Sigma,$$
called \emph{geometric realization}, which is a homotopy equivalence and induces an isomorphism on Borel--Moore homology.
The original construction by Vassiliev and Gorinov uses topological joins to construct $\ba \X\ba$. This construction was reformulated in \cite{OTM4} by using the language of cubical spaces. This ensures in particular that the map induced by $|\epsilon|$ on Borel--Moore homology respects mixed Hodge structures.

Vassiliev--Gorinov's method provides also a stratification $\{F_j\}_{j=1,\dots,N}$ on $\ba\X\ba$. Each $F_j$ is locally closed in $\ba\X\ba$, hence one gets a spectral sequence converging to $\bar H_\pu(\Sigma;\Q)\cong\bar H_\pu(\ba\X\ba;\Q)$, with $\Ee^1_{p,q}\cong\bar H_{p+q}(F_p)$. To compute the Borel--Moore homology of $F_j$ for all $j=1,\dots,R$, it is helpful to use an auxiliary space $\ba\Lambda\ba$, whose construction depends only on the geometry of the families $X_1\dots,X_R$, and which is covered by locally closed subsets $\{\Phi_j\}_{j=1,\dots,N}$. 

\begin{prop}[\cite{Gorinov}]\label{ucci}
\renewcommand{\labelenumi}{\arabic{enumi}.}
\begin{enumerate}
\item\label{ucpri} For every $j=1,\dots,R$, the stratum $F_j$ is a complex vector bundle  over $\Phi_j$. The space $\Phi_j$ is in turn a fiber bundle over the configuration space $X_j$. 
\item\label{ucsec} If $X_j$ consists of configurations of $m$ points, the fiber of $\Phi_j$ over any $x\in X_j$ is an $(m-1)$-dimensional open simplex, which changes its orientation under the homotopy class of a loop in $X_j$ interchanging a pair of points in $x_j$.
\item\label{opeco}  If $X_R=\{Z\}$, $F_R$ is the open cone with vertex a point (corresponding to the configuration $Z$),  over $\ba\Lambda\ba\setminus\Phi_R$. 
\end{enumerate}\end{prop}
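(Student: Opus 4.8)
The plan is to read off all three assertions directly from the explicit construction of the cubical spaces $\ba\mathcal X\ba$ and $\ba\Lambda\ba$ attached to the ordered list $X_1,\dots,X_R$; once that construction is recalled, the proposition is largely a matter of bookkeeping. Following \cite{Gorinov} (in the cubical reformulation of \cite{OTM4}), one first fixes an embedding of $Z$ into a large affine space such that any bounded collection of points of $Z$ is affinely independent --- this is possible because Gorinov's axioms bound the cardinality of the configurations that actually occur. To each configuration $x=\{z_1,\dots,z_m\}\in X_j$ one attaches the piece $L(x)\times\op\Delta(x)$, where $\op\Delta(x)$ is the interior of the simplex spanned by $z_1,\dots,z_m$, and these pieces are glued along faces according to inclusion of configurations and along the filtration determined by the order $X_1,\dots,X_R$. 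The geometric realisation is $\ba\mathcal X\ba$, the augmentation $|\epsilon|$ sends a point lying over $v\in L(x)$ to $v$, and $\ba\Lambda\ba$ is obtained by discarding the linear factors $L(x)$. By construction the locally closed stratum $F_j\subset\ba\mathcal X\ba$ (respectively $\Phi_j\subset\ba\Lambda\ba$) is precisely the part of the realisation built from the pieces indexed by $X_j$.

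With this in place, part~\ref{ucpri} is the formal step: $F_j$ is the restriction to $\Phi_j$ of the linear bundle whose fibre over a point lying above $x\in X_j$ is $L(x)$, and Gorinov's axioms force $\dim_\C L(x)$ to be constant along $X_j$ and $x\mapsto L(x)$ to vary locally trivially, so $F_j\to\Phi_j$ is a complex vector bundle; similarly $\Phi_j\to X_j$ is the bundle whose fibre over $x$ is $\op\Delta(x)$, and since the cardinality $m$ of $x$ is constant on $X_j$, a local trivialisation of $X_j$ together with a continuous labelling of the points of its configurations trivialises $\Phi_j\to X_j$ locally. For part~\ref{ucsec}, write $\op\Delta(x)$ in barycentric coordinates as $\{(\lambda_1,\dots,\lambda_m):\lambda_i>0,\ \sum_i\lambda_i=1\}$ with $\lambda_i$ the coordinate at $z_i$; transporting around a loop in $X_j$ that returns $x$ to itself while inducing the permutation $\pi\in\s_m$ of the $z_i$ acts on this open $(m-1)$-simplex by permuting the barycentric coordinates, a map which reverses orientation precisely when $\operatorname{sgn}(\pi)=-1$. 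Hence the orientation local system of $\Phi_j\to X_j$ is the sign local system $\pm\Q$, as claimed.

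The delicate assertion is part~\ref{opeco}, and this is where I expect the real work to be. Since $Z$ is the maximum of the poset of configurations appearing in the list, the cell indexed by $Z$ cones off all the cells that precede it: $\ba\Lambda\ba$ thereby acquires the structure of a closed cone over $\ba\Lambda\ba\setminus\Phi_R$ with apex the vertex $v_\ast$ attached to $Z$, and $\Phi_R$ is the corresponding open cone. Because $L(Z)=0$ --- the only section singular along all of $Z$ is the zero section, cf.\ line~$13$ of Table~\ref{Vaslist} --- the vector bundle $F_R\to\Phi_R$ of part~\ref{ucpri} has rank $0$, so $F_R=\Phi_R$ is this open cone, i.e.\ the union of the vertex $v_\ast$ with an $\op\Delta_1$-bundle over $\ba\Lambda\ba\setminus\Phi_R$, and $|\epsilon|$ sends $v_\ast$ to the zero section. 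The genuine obstacle is that $Z$ is infinite, so the simplex over $Z$ is not a finite simplex; one must truncate the ambient simplex at a large but finite dimension $D$ --- large enough that all configurations of $X_1,\dots,X_{R-1}$ and all their faces still embed --- and then verify that this cone description is independent of $D$, exactly as in Vassiliev's stabilisation argument for the conical resolution in \cite{Vart}.
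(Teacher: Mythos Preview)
The paper does not give its own proof of this proposition: it is stated with a citation to \cite{Gorinov} and used as a black box throughout Sections~\ref{introQ0}--\ref{Qdelta}. So there is no ``paper's proof'' to compare against; the question is only whether your sketch is a faithful account of the argument in \cite{Gorinov} and \cite{OTM4}.

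On that score your outline is essentially correct. Parts~\ref{ucpri} and~\ref{ucsec} are indeed bookkeeping once the construction is recalled: the constancy of $\dim_\C L(x)$ along $X_j$ and the local triviality of $x\mapsto L(x)$ are among Gorinov's axioms, and the orientation computation via barycentric coordinates is the standard one. For part~\ref{opeco} you have correctly identified both the mechanism (the cell over $Z$ cones off everything below it, and $L(Z)=0$ collapses the vector-bundle factor) and the genuine technical point (the simplex over $Z$ is infinite-dimensional and one must pass through a finite-dimensional truncation and stabilise). One small caution: in the cubical reformulation of \cite{OTM4} the pieces are not literally glued as $L(x)\times\op\Delta(x)$ along faces in the way you describe --- that is closer to Vassiliev's original simplicial/join model --- but the resulting stratifications $F_\bullet$ and $\Phi_\bullet$ agree, so the discrepancy is cosmetic for the purposes of this proposition.
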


We recall here the topological definition of an open cone.
\begin{dfn}
Let $B$ be a topological space. Then a space is said to be an \emph{open cone} over $B$ with vertex a point if it is homeomorphic to the space $B\times[0,1)/R$, where the equivalence relation is $R=(B\times\{0\})^2$.
\end{dfn}

The fiber bundle $\Phi_j\rightarrow X_j$ of Proposition \ref{ucci} is in general non-orientable. As a consequence, we have to consider the homology of $X_j$ with coefficients not in $\Q$, but in some local system of rank one. Therefore we recall some constructions concerning Borel--Moore homology of configuration spaces with twisted coefficients.

\begin{dfn}\label{tuples}
Let $Z$ be a topological space. Then for every $k\geq 1$ we have the space of ordered configurations of $k$ points in $Z$,
$$\F kZ=Z^k\setminus\bigcup_{1\leq i<j\leq k}\{(z_1,\dots,z_k)\in Z^k: z_i=z_j\}.$$
There is a natural action of the symmetric group $\s_k$ on $F(k,Z)$. The quotient is called the space of unordered configurations of $k$ points in $Z$,
$$\B kZ=\F kZ/\s_k.$$
\end{dfn}

The \emph{sign representation} $\pi_1(\B kZ)\rightarrow \Aut(\Z)$ maps the paths in $\B kZ$ defining odd (respectively, even) permutations of $k$ points to multiplication by $-1$ (respectively, 1). The local system $\pm\Q$ over $\B kZ$ is the one locally isomorphic to $\Q$, but with monodromy representation equal to the sign representation of $\pi_1(\B kZ)$. We will often call $\bar H_\pu (\B kZ,\pm\Q)$ the \emph{Borel--Moore homology of $\B kZ$ with twisted coefficients}, or, simply, the \emph{twisted Borel--Moore homology of $\B kZ$}.

The following is Lemma~2 in \cite{Vart}.
\begin{lem}\begin{enumerate}\label{lem1}
\item\label{lem1i}
If $N\geq 1$, $k\geq2$, the twisted Borel--Moore homology of $\B k{\C^N}$ is trivial.
\item\label{lem1ii}
If $N\geq1$, we have isomorphisms $$\bar H_\pu(\B k{\Pp N};\pm\Q)\cong\bar H_{\pu-k(k-1)}(\bb G(k-1,\Pp N);\Q)$$ for every $k\geq1$, where $\bb G(k-1,\Pp N)$ denotes the Grassmann variety of $(k-1)$-dimensional linear subspaces in $\Pp N$. In particular, $\bar H_\pu(\B k{\Pp N};\pm\Q)=0$ if $k\geq N+2$.
\end{enumerate}
\end{lem}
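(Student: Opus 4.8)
This statement is \cite[Lemma~2]{Vart}; the plan is to reduce both parts, by elementary stratifications, to two standard inputs: the vanishing of the rational cohomology of the braid group with sign coefficients, and the cellular structure of Grassmannians. I will first prove part (i), and then use it to prove part (ii) by induction on $N$.

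For part (i) I would argue by induction on $N$. The base case $N=1$: the space $\B k\C$ is a $K(\pi,1)$ for the Artin braid group $B_k$, and the local system $\pm\Q$ corresponds to the character $B_k\twoheadrightarrow\s_k\to\{\pm1\}$, which is non-trivial when $k\geq2$; by Arnol'd's computation of $H^\pu(B_k;\pm\Q)$ this twisted cohomology vanishes, so $\BM[\pm\Q]\pu{\B k\C}=0$ by Poincar\'e duality on the smooth complex manifold $\B k\C$. For the inductive step, fix a generic linear functional $\lambda\co\C^N\to\C$ and stratify $\B k{\C^N}$ by the partition of the $k$ points induced by the fibres of $\lambda$. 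On the open stratum $\lambda$ separates the points, and this stratum is the total space of a rank-$(N-1)k$ affine bundle over $\B k\C$ (the base recording the unordered set of $\lambda$-values, the fibre the position of each point inside its hyperplane), so its twisted Borel--Moore homology is a degree shift of $\BM[\pm\Q]\pu{\B k\C}$, which vanishes for $k\geq2$ by the base case. Every other stratum fibres, over a coloured configuration space in $\C$, with fibre a product $\prod_i\B{a_i}{\C^{N-1}}$ in which some $a_i\geq2$, so its twisted Borel--Moore homology vanishes by the inductive hypothesis. The stratification spectral sequence then yields $\BM[\pm\Q]\pu{\B k{\C^N}}=0$ for $k\geq2$.

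For part (ii) I would prove, by induction on $N$ (the case $N=0$ being trivial), the refined statement that $\BM[\pm\Q]\pu{\B k{\Pp N}}$ is a sum of Tate structures placed in even degrees and is isomorphic to $\BM{\pu-k(k-1)}{\bb G(k-1,\Pp N)}\otimes\Q(k(k-1)/2)$. Fix a hyperplane $H\cong\Pp{N-1}\subset\Pp N$ with complement $\C^N$, and stratify $\B k{\Pp N}$ by the number $j$ of points of the configuration lying on $H$; the stratum $B_j$ is $\B j{\Pp{N-1}}\times\B{k-j}{\C^N}$, whose twisted Borel--Moore homology vanishes unless $k-j\leq1$ by part (i). Hence, using the long exact sequences of the open inclusions, $\BM[\pm\Q]\pu{\B k{\Pp N}}\cong\BM[\pm\Q]\pu Z$, where $Z$ is the closed subset $B_k\sqcup B_{k-1}$ with $B_k=\B k{\Pp{N-1}}$ and $B_{k-1}=\B{k-1}{\Pp{N-1}}\times\C^N$. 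By the inductive hypothesis $\BM[\pm\Q]\pu{B_k}$ and $\BM[\pm\Q]\pu{B_{k-1}}$ are Tate and concentrated in even degrees, so the long exact sequence of the pair degenerates and $\BM[\pm\Q]\pu Z=\BM[\pm\Q]\pu{B_k}\oplus\BM[\pm\Q]\pu{B_{k-1}}$. On the other side, relative to $H$ a $(k-1)$-plane in $\Pp N$ either lies in $H$, contributing $\bb G(k-1,\Pp{N-1})$, or meets $H$ in a $(k-2)$-plane; the latter locus is a rank-$(N-k+1)$ affine bundle over $\bb G(k-2,\Pp{N-1})$, giving the analogous splitting of $\BM\pu{\bb G(k-1,\Pp N)}$. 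Invoking the inductive hypothesis at level $N-1$ for the indices $k$ and $k-1$, and checking that the homological shifts and Tate twists agree term by term (the bookkeeping reduces to $k(k-1)+2(N-k+1)=(k-1)(k-2)+2N$ and $k(k-1)/2+(N-k+1)=(k-1)(k-2)/2+N$), identifies $\BM[\pm\Q]\pu Z$ with $\BM{\pu-k(k-1)}{\bb G(k-1,\Pp N)}\otimes\Q(k(k-1)/2)$ and shows it is Tate of even degree, closing the induction. The last clause ($\BM[\pm\Q]\pu{\B k{\Pp N}}=0$ for $k\geq N+2$) then follows from $\bb G(k-1,\Pp N)=\emptyset$.

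The main obstacle is the bookkeeping in part (ii): one must maintain inductively that every Borel--Moore homology group appearing is pure Tate and supported in even degrees --- this is what forces all the long exact sequences and the stratification spectral sequences to degenerate --- and one must correctly pin down the Tate twist hidden in the shift ``$\pu-k(k-1)$'' (namely a twist by $\Q(k(k-1)/2)$) and track its propagation through the hyperplane recursion. Once the stratifications are in place, part (i) and the degeneration statements themselves are routine.
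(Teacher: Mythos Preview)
The paper does not prove this lemma; it simply quotes it as Lemma~2 of \cite{Vart} without argument. Your proposal is therefore not being compared against anything in the paper, but it is a correct reconstruction of (a version of) Vassiliev's proof: part~(i) is the Arnol'd vanishing $H^\pu(B_k;\pm\Q)=0$ for $k\geq2$, propagated from $\C$ to $\C^N$ via the stratification by fibres of a linear projection; part~(ii) is the hyperplane recursion on $\B k{\Pp N}$ matched against the Schubert-cell recursion on $\bb G(k-1,\Pp N)$. The refinements you insert --- the Tate twist $\Q(k(k-1)/2)$ and the inductive hypothesis that everything is Tate and concentrated in even degree --- are correct and are precisely what is needed to force all the long exact sequences to degenerate, so that the recursion closes at the level of mixed Hodge structures rather than merely as graded vector spaces.

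One small point worth making explicit in part~(i): on a non-open stratum the restriction of $\pm\Q$ is not globally a product local system (swapping two blocks of size $a$ contributes sign $(-1)^a$ on the base), but this does not matter, since vanishing of the fibrewise twisted Borel--Moore homology already kills the Leray spectral sequence at $E_2$.
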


\def\cprime{$'$}

\end{document}